\theoremstyle{plain}
\newtheorem{thm}{Theorem}[section]
\newtheorem*{thm*}{Theorem}
\newtheorem{prop}[thm]{Proposition}
\newtheorem{lem}[thm]{Lemma}
\theoremstyle{definition}
\newtheorem{mydef}{Definition}[section]
\theoremstyle{remark}
\newtheorem*{rem}{Remark}
\def\forcehmode{\hskip0pt\relax}
\let\myskip=\medskip
\def\definebb#1=#2.{\def#1{{{\mathbb #2}^{\vphantom{x}}}}}
\def\Cal#1{\mathcal{#1}}
\def\calB{{\Cal B}}
\def\calV{{\Cal V}}
\def\hyp{\h}
\def\arf{\si}
\def\hsi{{\hat\si}}
\def\Gm{{G_m}}
\def\Pm{{G_m^+}}
\def\Nm{{G_m^-}}
\def\lev{{s}}
\def\levm{\lev_m}
\def\lat{\Ga}
\def\piorb{\pi_1}
\def\piorbO{\pi_1^0}
\def\hGa{\hat{\Ga}}
\def\dd{\partial}
\def\al{{\alpha}}
\def\be{{\beta}}
\def\Ga{{\Gamma}}
\def\ga{{\gamma}}
\def\de{{\delta}}
\def\De{{\Delta}}
\def\ve{{\varepsilon}}
\def\la{{\lambda}}
\def\si{{\sigma}}
\def\Si{{\Sigma}}
\def\om{{\omega}}
\let\emptyset=\varnothing
\def\st{\,\,\big|\,\,}
\def\<{\langle}
\def\>{\rangle}
\def\ie{i.e.\xspace}
\let\ge=\geqslant
\let\le=\leqslant
\DeclareMathOperator{\Aut}{Aut}
\DeclareMathOperator{\id}{id}
\DeclareMathOperator{\Hol}{Hol}
\let\mod=\undefined \DeclareMathOperator{\mod}{mod}
\begin{document}

\author[Sergey Natanzon]{Sergey Natanzon}
\address{National Research University Higher School of Economics, Vavilova Street 7, 117312 Moscow, Russia}
%\address{Laboratory of Quantum Topology, Chelyabinsk State University, Chelyabinsk, Russia}
%\address{Belozersky Institute of Phys.-Chem. Biology, Moscow State University, Korp.~A, Leninske Gory, 11899 Moscow, Russia}
\address{Institute of Theoretical and Experimental Physics (ITEP), Moscow, Russia}
%\address{Independent University of Moscow, Bolshoi Vlasevsky Pereulok, 11 Moscow, Russia}
\email{natanzon@mccme.ru}
\author[Anna Pratoussevitch]{Anna Pratoussevitch}
\address{Department of Mathematical Sciences\\ University of Liverpool\\ Liverpool L69~7ZL}
\email{annap@liv.ac.uk}

\title{Higher Spin Klein Surfaces}
%\title{Real Higher Theta Characteristics}
%\title{Higher Spin Bundles on Klein Surfaces}
%\title{Real Forms of Higher Spin Riemann Surfaces}

\begin{date}  {\today} \end{date}

\thanks{Grant support for S.N.: The article was prepared within the framework of the Academic Fund Program
at the National Research University Higher School of Economics (HSE) in 2015--16 (grant Nr 15-01-0052)
and supported within the framework of a subsidy granted to the HSE by the Government of the Russian Federation for the implementation of the Global Competitiveness Program.
Grant support for A.P.: The work was supported in part by the Leverhulme Trust grant RPG-057.}

%and was supported in part
%by the Laboratory of Quantum Topology of Chelyabinsk State University (Russian Federation Government grant 14.Z50.31.0020)
%the grants RFBR 13-01-00755, NSh 5138.2014.1

\begin{abstract}
A Klein surface is a generalisation of a Riemann surface
to the case of non-orientable surfaces or surfaces with boundary.
The category of Klein surfaces is isomorphic to the category of real algebraic curves.
An $m$-spin structure on a Klein surface is a complex line bundle whose $m$-th tensor power is the cotangent bundle.
We describe all $m$-spin structures on Klein surfaces of genus greater than one
and determine the conditions for their existence. 
In particular we compute the number of $m$-spin structures on a Klein surface
in terms of its natural topological invariants.
\end{abstract}

%We find all $m$-spin structures on Klein surfaces of genus larger than one.
%An $m$-spin structure on a Riemann surface~$P$ is a complex line bundle on~$P$
%whose $m$-th tensor power is the cotangent bundle of~$P$.
%A Klein surface can be described by a pair $(P,\tau)$,
%where $P$ is a Riemann surface and $\tau$ is an anti-holomorphic involution on~$P$.
%An $m$-spin structure on a Klein surface $(P,\tau)$ is an $m$-spin structure on the Riemann surface $P$
%which is preserved under the action of the anti-holomorphic involution~$\tau$.

% An $m$-Theta characteristic on a Riemann surface~$P$ can be understood as an $m$-spin bundle,
% i.e.\ a complex line bundle on~$P$ whose $m$-th tensor power is the cotangent bundle of~$P$.
% Real algebraic curves can be described by pairs $(P,\tau)$,
% where $P$ is a Riemann surface obtained by complexification
% and $\tau$ is an anti-holomorphic involution on~$P$ that corresponds to complex conjugation.
% We call an $m$-Theta characteristic real if the corresponding $m$-spin bundle is invariant under the involution~$\tau$.

% We prove that the number of $m$-spin structures on a given Klein surface
% is determined by~$m$, the topological invariants of the Klein surface
% and the topological invariants of the $m$-spin structure.

\subjclass[2010]{Primary 30F50, 14H60, 30F35; Secondary 30F60}

% 30F50: Klein surfaces

% 14H60: Vector bundles on curves (and their moduli?)

% 30F35: Fuchsian groups and automorphic functions

% 30F60: Teichmueller theory

\keywords{Higher spin bundles, higher Theta characteristics, real forms, Riemann surfaces, Klein surfaces, Arf functions, lifts of Fuchsian groups}

\maketitle

%\tableofcontents

\section{Introduction}

\myskip
Under an $m$-{\it spin Riemann surface} for an integer $m>1$
we understand a compact Riemann surface~$P$ with a complex line bundle $e:L\to P$
such that its $m$-th tensor power $e^{\otimes m}:L^{\otimes m}\to P$ is isomorphic to the cotangent bundle of~$P$
(compare with~\cite{Jarvis:2000}).
% Let $P$ be a connected Riemann surface which corresponds to an irreducible complex algebraic curve.
% An $m$-Theta characteristic is a divisor class~$D$ in~$P$ such that $m\cdot D$ belongs to the canonical class.
This is a natural generalisation of the classical ($m=2$) algebraic curves with Theta characteristics
studied by Riemann~\cite{R}. 
% For $m=2$ this leads to classical Theta characteristics studied by Riemann~\cite{R}.
The moduli spaces of $m$-spin Riemann surfaces have been studied
because of their connections with integrable systems~\cite{Witten:1993}, \cite{FShZ}
%The moduli spaces of complex algebraic curves with $m$-Theta characteristics have enjoyed a lot of interest recently
%because of their role in the modern models of 2D-gravitation,
%in particular in the Witten conjecture~\cite{Witten:1993}, \cite{FShZ}.

\myskip
The invariants of an $m$-spin Riemann surface $(P,e)$
are given by the genus $g$ of~$P$ and the Arf invariant $\de\in\{0,1\}$.
The Arf invariant is determined by the parity of the dimension of the space of sections of the $m$-spin bundle,
see~\cite{A}, \cite{Mu}.
For a given Riemann surface of genus~$g$, the number of corresponding $m$-spin Riemann surfaces is $m^{2g}$.
For odd~$m$ the Arf invariant is always  $\de=0$.
For even~$m$, the number of $m$-spin Riemann surfaces with $\de=1$ and $\de=0$
is $2^{-1-g}m^{2g}(2^g-1)$ and $2^{-1-g}m^{2g}(2^g+1)$ respectively, see~\cite{Jarvis:2000}, \cite{NP:2005}, \cite{NP:2009}.
% For even~$m$, as in the case $m=2$, one can determine the parity of the $m$-Theta characteristic.

\myskip
{\it A Klein surface} is a generalisation of a Riemann surface
in the case of non-orientable surfaces or surfaces with boundary.
A Klein surface is a quotient $P/\tau$,
where $P$ is a compact Riemann surface and $\tau:P\to P$ is an anti-holomorphic involution on~$P$.
The category of such pairs $(P,\tau)$ is isomorphic to the category of real algebraic curves,
see~\cite{AllingGreenleaf:1971}.
% The category of irreducible real algebraic curves is isomorphic to the category of pairs $(P,\tau)$,
% where $P$ is a compact Riemann surface obtained by complexification
% and $\tau:P\to P$ is an anti-holomorphic involution on~$P$ that corresponds to complex conjugation,
% see~\cite{AllingGreenleaf:1971}.
% We will refer to such pairs as Klein surfaces as the category of such pairs is isomorphic to the category of Klein surfaces via $(P,\tau)\mapsto P/\tau$, see~\cite{N1990a}.

\myskip
The boundary of the surface $P/\tau$ corresponds to the set of fixed points of the involution~$\tau$
and to the set of real points of the corresponding real algebraic curve.
If not empty, the boundary of $P/\tau$ decomposes into pairwise disjoint simple closed smooth curves,
called {\it ovals\/}, see~\cite{N1990a}.
% These curves correspond to connected components of the set of fixed points~$P^{\tau}$ of the involution $\tau:P\to P$.
(The second kind of closed curves invariant under the involution~$\tau$, called {\it twists},
are invariant curves which are not pointwise fixed by~$\tau$.)
The topological type of a Klein surface $(P,\tau)$ is determined
by the genus~$g$ of~$P$,
the number~$k$ of connected components of the boundary of~$P/\tau$
and the orientability ($\ve=1$) or non-orientability ($\ve=0$) of~$P/\tau$.
The invariants $(g,k,\ve)$ of a Klein surface $(P,\tau)$ satisfy the conditions
$0\le k\le g$ for $\ve=0$, $1\le k\le g+1$ and $k\equiv g+1~(\mod2)$ for $\ve=1$,
see~\cite{Weichold:1883}.
Moreover, the space of all Klein surfaces with the invariants~$(g,k,\ve)$ is connected,
has dimension~$3g-3$ for $g>1$ and is $K(\pi,1)$, see~\cite{N1975, N1978moduli}.

\myskip
Under an $m$-spin Klein surface we understand a Klein surface $(P,\tau)$
with an $m$-spin structure $(P,e:L\to P)$
and an anti-holomorphic involution $\be:L\to L$
such that $e\circ\be=\tau\circ e$.
Recent work~\cite{OT:2013} shows the connection between real $2$-spin Klein surfaces and Abelian Yang-Mills theory.
Combining the topological invariants $(g,k,\ve)$ for the Klein surface $(P,\tau)$
and $(g,m,\de)$ for the $m$-spin surface $(P,e)$ 
we obtain the topological invariants $(g,k,\ve,m,\de)$ of $(P,\tau,e,\be)$.
In this paper we prove that for any Klein surface $(P,\tau)$ of type $(g,k,\ve)$ with $g\ge2$ the number $N(g,k,\ve,m,\de)$
of $m$-spin Klein surfaces $(P,\tau,e,\be)$ with the Arf invariant~$\de$ only depends on the invariants $(g,k,\ve,m,\de)$.
Moreover, we compute the number $N=N(g,k,\ve,m,\de)$:
\begin{enumerate}[$\bullet$]
\item
For $m\equiv1~(\mod2)$ we prove $N=m^g$ if $g\equiv1~(\mod m)$, $\de=0$ and $N=0$ otherwise.
\item
For $\ve=0$, $m\equiv0~(\mod2)$ and $k=0$ we prove $N=\frac{m^g}{2}$ if $g\equiv1~(\mod\frac{m}{2})$ and $N=0$ otherwise.
\item
For $\ve=0$, $m\equiv0~(\mod2)$ and $k\ge1$ we prove $N=m^g\cdot 2^{k-2}$ if $g\equiv1~(\mod\frac{m}{2})$ and $N=0$ otherwise.
\item
For $\ve=1$ and $m\equiv0~(\mod4)$ we prove $N=m^g\cdot 2^{k-2}$ if $g\equiv1~(\mod\frac{m}{2})$ and $N=0$ otherwise.
\item
For $\ve=1$, $m\equiv2~(\mod4)$ and $\de=0$ we prove $N=\frac{m^g}{2}(2^{k-1}+1)$ if $g\equiv1~(\mod\frac{m}{2})$ and $N=0$ otherwise.
\item
For $\ve=1$, $m\equiv2~(\mod4)$ and $\de=1$ we prove $N=\frac{m^g}{2}(2^{k-1}-1)$ if $g\equiv1~(\mod\frac{m}{2})$ and $N=0$ otherwise.
\end{enumerate}
The special case $m=2$ was studied in~\cite{N1990b, N1999,Nbook}.
% Under a real $m$-Theta characteristic on a real algebraic curve~$(P,\tau)$
% we understand an $m$-Theta characteristic on~$P$ which is invariant under the involution~$\tau$.
% Real $2$-Theta characteristics were studied in~\cite{N1989, Nbook}.
% Recent work~\cite{OT:2013} shows the connection between real $2$-Theta characteristics and Abelian Yang-Mills theory.
% In this paper we give a description of real $m$-Theta characteristics for general~$m$.
% We will assume that the genus of the underlying Riemann surface~$P$ is at least two, i.e.\ $P$ is hyperbolic.
The cases when $P$ is a sphere or a torus require different methods.

% \myskip
% Our approach is to study $m$-Theta characteristics on a Riemann surface~$P$ via the corresponding $m$-spin bundles,
% i.e.\ complex line bundles $e:L\to P$ such that the $m$-th tensor power $e^{\otimes m}:L^{\otimes m}\to P$
% is isomorphic to the cotangent bundle of~$P$.
% Let $(P,\tau)$ be a Klein surface.
% Let $\Ga\subset\Isom(\hyp)$ be the group of isometries of the hyperbolic plane
% that gives an uniformisation of $P/\<\tau\>$. 
% Following the techniques in~\cite{NP:2005, NP:2009},
% we assign to an $m$-spin bundle on~$P$ which is invariant under~$\tau$
% a lift of~$\Ga$ into the $m$-fold covering of $\Isom(\hyp)$.
% Such lifts can be described by $m$-Arf functions, introduced and studied in~\cite{NP:2005, NP:2009, NP:2011, NP:2013}.

\myskip
As an application of the results of this paper,
a complete list of topological invariants of higher spin Klein surfaces and a description of their moduli spaces
will be studied in the forthcoming paper~\cite{NP:2015b}.

\myskip
Our investigation of $m$-spin Klein surfaces is based on $m$-Arf functions.
An $m$-Arf function is a function on the set $\piorbO(P)$ of oriented simple closed curves on~$P$ with values in~$\z/m\z$
which satisfies certain geometric properties.
It can also be interpreted as the monodromy of a natural connection on the $m$-spin bundle.
According to~\cite{NP:2009}, $m$-Arf functions are in $1$-to-$1$ correspondence with $m$-spin Riemann surfaces.

% \myskip
% As an application of this characterisation of real $m$-Arf functions
% we determine the conditions for the existence and give a complete description of all real $m$-Theta characteristics
% on a real algebraic curve:
%%we give a complete description of real $m$-Theta characteristics.
% There exist real $m$-Theta characteristics on a Klein surface
% if and only if $g\equiv1~(\mod m/2)$ for even~$m$ and orientable $P/\<\tau\>$ and $g\equiv1~(\mod m)$ for odd~$m$,
% while for even~$m$ and non-orientable $P/\<\tau\>$ there always exist real $m$-Theta characteristics.
% For a given topological type of Klein surfaces,
% we describe all real $m$-Arf functions using their values on a suitably chosen generating set of $\piorbO(P)$.
% In particular for even~$m$ and $k\ge1$ there are $m^g\cdot 2^{k-1}$ different real $m$-Arf functions on~$(P,\tau)$.
% In the case when $m\equiv2~(\mod4)$ and $P/\<\tau\>$ is orientable
% there are $m^g\cdot(2^{k-2}+1)$ even and $m^g\cdot(2^{k-2}-1)$ odd real $m$-Arf functions.
% In other cases there are $m^g\cdot 2^{k-2}$ of each type of real $m$-Arf functions, even and odd.
% For odd~$m$ there are $m^g$ different real $m$-Arf functions on~$(P,\tau)$.
% See section~\ref{sec-classification} for more details.

\myskip
The paper is organised as follows:
In sections~\ref{sec-Aut} and~\ref{sec-klein-spin} we extend the constructions from~\cite{NP:2009} to  Klein surfaces.
We prove that $m$-spin Klein surfaces correspond to $m$-Arf functions which satisfy the conditions
\begin{enumerate}[$\bullet$]
\item
$\arf(\tau c)=-\arf(c)\quad\text{for all}~c\in\piorbO(P)$;
\item
$\arf(c)=0\quad\text{for any twist}~c\in\piorbO(P)$.
\end{enumerate}
In section~\ref{sec-classification} we prove our main theorems.

\myskip
The second author is grateful to the Isaac Newton Institute in Cambridge, where part of this work was done,
for its hospitality and support.
%We would like to thank XXX for useful discussions related to this work.
We would like to thank the referee for their valuable remarks and suggestions.

\section{Automorphisms of the Hyperbolic Plane}

\label{sec-Aut}

\subsection{Standard Coverings of the Group of Automorphisms}

Let $G=\Aut(\hyp)$ be the full isometry group of the hyperbolic plane~$\hyp$.
Here our model of the hyperbolic plane is the upper half-plane in $\c$.
This group has two connected components,
the group $G^+=\Aut_+(\hyp)$ of all orientation-preserving isometries of~$\hyp$
and the coset $G^-=\Aut_-(\hyp)$ of all orientation-reversing isometries of~$\hyp$.
Let $e_G$ be the identity element in~$G$.
Let $j\in G^-$ be the reflection in the imaginary axis, $j(z)=-\bar z$.
Then $G^-=j\cdot G^+$.

\begin{mydef}
\label{def-Gm}
Let $\pi:\Gm\to G$ be the Lie group $m$-fold covering of~$G$ given by $\Gm=\Pm\cup\Nm$ with
\begin{align*}
  \Pm=\left\{(g,\de)\in G^+\times\Hol(\hyp,\c^*)\st \de^m=\frac{d}{dz}g\right\},\\
  \Nm=\left\{(g,\de)\in G^-\times\overline{\Hol}(\hyp,\c^*)\st \de^m=\frac{d}{d\bar z}g\right\},
\end{align*}
and the product of elements $(g_1,\de_1)$ and~$(g_2,\de_2)$ in~$\Gm$ given by
$$
  (g_2,\de_2)\cdot(g_1,\de_1)
  =
  \left\{
         \begin{aligned}
            (g_2\circ g_1,(\de_2\circ g_1)\cdot\de_1)&\quad\text{if}\quad(g_2,\de_2)\in\Pm,\\
            (g_2\circ g_1,(\de_2\circ g_1)\cdot\bar{\de_1})&\quad\text{if}\quad(g_2,\de_2)\in\Nm.
         \end{aligned} 
  \right.
$$
The identity element of~$\Gm$ is $e_{\Gm}=(e_G,1)$,
where the second component is the constant function $z\mapsto 1$.
\end{mydef}

\myskip
The centre of~$G^+=\Aut_+(\hyp)$ is trivial, $Z(G^+)=\{e_G\}$.
Hence the centre of~$\Pm$ is contained in $\pi^{-1}(e_G)$, 
the set of elements of the form $(e_G,\exp(2\pi i k/m))$, $k=0,1,\dots,m-1$,
where the second component is the constant function $z\mapsto\exp(2\pi i k/m)$.
From the group law it follows that all such elements belong to the centre,
hence the centre is cyclic of order~$m$:
$$Z(\Pm)=\pi^{-1}(e_G)\cong\z/m\z.$$
Then $U=(e_G,\exp(2\pi i/m))$ is a generator of the centre,
$$Z(\Pm)=\<U\>=\{e_{\Gm},U,U^2,\dots,U^{m-1}\}.$$

\myskip
Let $J\in\Nm$ be a pre-image of the reflection~$j$.
Then $\Pm=J\cdot\Nm$.

\begin{prop}
\label{J-squared}
For a pre-image~$J\in\Nm$ of~$j$ we have~$J^2=e_{\Gm}$.
\end{prop}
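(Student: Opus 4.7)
The plan is to unpack the definition of $\Nm$ at the single element $J$, observe that the second component $\de$ of $J=(j,\de)$ must be a unimodular constant, and then apply the group law in $\Nm$ directly.

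First I would write $J=(j,\de)$ with $j(z)=-\bar z$ and $\de\in\overline{\Hol}(\hyp,\c^*)$ satisfying $\de^m=\frac{d}{d\bar z}j$. A one-line computation gives $\frac{d}{d\bar z}(-\bar z)=-1$, so $\de(z)^m\equiv-1$ on $\hyp$. Since the set of $m$-th roots of $-1$ is finite (a discrete subset of $\c^*$), the continuous map $\de$ takes a single value $\zeta\in\c^*$ with $\zeta^m=-1$. In particular $|\zeta|^m=1$, hence $|\zeta|=1$ and $\zeta\bar\zeta=1$.

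Next I would apply the multiplication formula from Definition~\ref{def-Gm} in the case where the left factor lies in $\Nm$:
\[
  J^2=(j,\de)\cdot(j,\de)=\bigl(j\circ j,\,(\de\circ j)\cdot\bar\de\bigr).
\]
The first coordinate is $j\circ j(z)=j(-\bar z)=-\overline{(-\bar z)}=z$, so $j\circ j=e_G$. For the second coordinate, $\de$ is the constant $\zeta$, so $\de\circ j=\zeta$ and $(\de\circ j)\cdot\bar\de=\zeta\bar\zeta=1$. Therefore $J^2=(e_G,1)=e_{\Gm}$.

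There is not really a hard step here: the only point that requires a moment of thought is that $\de$ is forced to be constant (because an anti-holomorphic, hence continuous, map into the discrete fibre $\{\zeta\in\c^*:\zeta^m=-1\}$ must be constant on the connected domain $\hyp$). Once this is noted, both coordinates of $J^2$ collapse to the identity, independently of which particular pre-image $J$ of $j$ we chose.
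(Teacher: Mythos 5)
Your proof is correct and follows essentially the same route as the paper's: write $J=(j,\de)$ with $\de^m=\frac{d}{d\bar z}j=-1$, note that $\de$ is a unimodular constant, and apply the group law to get $J^2=(e_G,|\de|^2)=e_{\Gm}$. Your extra remark justifying why $\de$ must be constant (continuity into a discrete root set on a connected domain) is a small elaboration of a point the paper states without proof.
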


\begin{proof}
The element~$J$ must of the form~$J=(j,\de)$ with $\de^m=\frac{d}{d\bar z}j=-1$,
i.e.\ $\de:\hyp\to\c^*$ is a constant function with~$\de^m=-1$.
Hence $J^2=(j,\de)\cdot(j,\de)=(j\circ j,(\de\circ j)\cdot\bar{\de})=(e_G,|\de|^2)=(e_G,1)=e_{\Gm}$.
\end{proof}

\begin{rem}
The Lie group~$G^+$ is connected and has an infinite cyclic fundamental group,
hence the Lie group $m$-fold covering~$\Pm$ of~$G^+$ is unique up to an isomorphism.
The Lie group~$G=G^+\cup G^-$ is not connected and could have several non-isomorphic Lie group $m$-fold coverings.
In fact we will see (compare with remark after Proposition~\ref{J-action})
that for odd~$m$ there is only one $m$-fold covering up to an isomorphism,
while for even~$m$ there are two non-isomorphic $m$-fold coverings
with pre-images of all reflections having order~$2$ or~$4$ respectively.
To describe $m$-spin bundles on Klein surfaces we will use the former covering
which we described explicitly in Definition~\ref{def-Gm}.
\end{rem}

\myskip
Elements of~$G^+$ can be classified with respect to the fixed point behavior of their action on~$\hyp$.
An element is called {\it hyperbolic\/} if it has two fixed points,
which lie on the boundary $\dd\hyp=\r\cup\{\infty\}$ of~$\hyp$.
A hyperbolic element with fixed points~$\al$, $\be$ in~$\r$ is of the form
$$\tau_{\al,\be}(\la):z\mapsto\frac{(\la\al-\be)z-(\la-1)\al\be}{(\la-1)z+(\al-\la\be)},$$
%$$
%  \tau_{\al,\be}(\la)
%  =\left[\frac{1}{(\al-\be)\cdot\sqrt{\la}}\cdot\begin{pmatrix}\la\al-\be&-(\la-1)\al\be\\ \la-1&\al-\la\be\end{pmatrix}\right],
%$$
where~$\la>0$.
One of the fixed points of a hyperbolic element is attracting, the other fixed point is repelling.
The {\it axis\/} of a hyperbolic element $g$ is the geodesic between the fixed points of~$g$,
oriented from the repelling fixed point to the attracting fixed point.
The axis of a hyperbolic element is preserved by the element.
%We call a hyperbolic element with attracting fixed point~$\al$ and repelling fixed point~$\be$ {\it positive\/}
% if~$\al<\be$.
%The {\it shift parameter\/} of a hyperbolic element~$g$ is the minimal displacement~$\inf_{x\in\hyp}\,d(x,g(x))$.
The map $\la\mapsto\tau_{\al,\be}(\la)$ defines a homomorphism $\r_+\to G$
(with respect to the multiplicative structure on $\r_+$).
We have $(\tau_{\al,\be}(\la))^{-1}=\tau_{\al,\be}(\la^{-1})=\tau_{\be,\al}(\la)$.

\myskip
An element is called {\it parabolic\/} if it has one fixed point, which is on the boundary~$\dd\hyp$.
A parabolic element with real fixed point $\al$ is of the form
$$\pi_{\al}(\la):z\mapsto\frac{(1-\la\al)z+\la\al^2}{-\la z+(1+\la\al)}.$$
%$$\pi_{\al}(\la)=\left[\begin{pmatrix}1-\la\al&\la\al^2\\ -\la&1+\la\al\end{pmatrix}\right].$$
% We call a parabolic element $g$ with fixed point~$\al$ {\it positive\/} if $g(x)>x$ for all~$x\in\r\backslash\{\al\}$.
The map $\la\mapsto\pi_{\al}(\la)$ defines a homomorphism $\r\to G$ (with respect to the additive structure on $\r$).
We have $(\pi_{\al}(\la))^{-1}=\pi_{\al}(-\la)$.

\myskip
An element that is neither hyperbolic nor parabolic is called {\it elliptic\/}.
It has one fixed point in $\hyp$.
Given a base-point $x\in\hyp$ and a real number $\varphi$,
let $\rho_x(\varphi)\in G$ denote the rotation through angle $\varphi$ counter-clockwise about the point $x$.
Any elliptic element is of the form $\rho_x(\varphi)$, where $x$ is the fixed point.
Thus we obtain a $2\pi$-periodic homomorphism $\rho_x:\r\to G$ (with respect to the additive structure on $\r$).

\myskip
Elements of $\Pm$ can be classified with respect to the fixed point behavior
of the action of their image in~$G^+$ on $\hyp$. 
We say that an element of~$\Gm$ is {\it hyperbolic\/}, {\it parabolic\/} resp.\ {\it elliptic\/}
if its image in~$G^+$ has this property.

\myskip
The homomorphisms
$$\tau_{\al,\be}:\r_+\to G,\quad \pi_{\al}:\r\to G,\quad \rho_x:\r\to G$$
define one-parameter subgroups in the group~$G$.
Each of these homomorphisms
% $\tau_{\al,\be}:\r_+\to G$, $\pi_{\al}:\r\to G$, resp.\ $\rho_x:\r\to G$
lifts to a unique homomorphism into the $m$-fold cover:
$$T_{\al,\be}:\r_+\to\Gm,\quad P_{\al}:\r\to\Gm,\quad R_x:\r\to\Gm.$$
The elements $T_{\al,\be}(\la)$, $P_{\al}(\la)$ and~$R_x(\xi)$
are called {\it hyperbolic\/}, {\it parabolic\/} or {\it elliptic\/} respectively.

\myskip
A simple computation shows that for $x=i\in\hyp$ we obtain
$$R_x(2\pi)=(e_G,\exp(2\pi i/m))=U.$$
Hence
$$R_x(2\pi k)=R_x(2\pi)^k=U^k$$
for $x=i\in\hyp$ and any integer $k$.
Since $\rho_x(2\pi k)=\id$ for any integer $k$,
it follows that the lifted element $R_x(2\pi k)$ belongs to $\pi^{-1}(e_G)=Z(\Pm)$.
Note that the element $R_x(2\pi k)$ depends continuously on $x$.
But the fibre $\pi^{-1}(e_G)$ is discrete, so the element $R_x(2\pi k)$ must remain constant,
thus it does not depend on~$x$.
We obtain $R_x(2\pi k)=U^k$ for any integer $k$.
% For the lift of $\rho_x(2\pi/p)$ we have $(R_x(2\pi/p))^p=R_x(2\pi)=U$.

\myskip
The following identities are easy to check geometrically:

\begin{prop}
\label{j-action}
We have
$j\tau_{\al,\be}(\la)j^{-1}=\tau_{-\al,-\be}(\la)$,
$j\pi_{\al}(\la)j^{-1}=\pi_{-\al}(-\la)$,
$j\rho_x(t)j^{-1}=\rho_{-\bar x}(-t)$.
In particular
$j\tau_{0,\infty}(\la)j^{-1}=\tau_{0,\infty}(\la)$,
$j\pi_0(\la)j^{-1}=\pi_0(-\la)$,
$j\rho_i(t)j^{-1}=\rho_i(-t)$.
\end{prop}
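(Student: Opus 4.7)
The plan is to check each of the three general identities directly, after which the ``in particular'' identities follow by substituting $\al=0,\be=\infty$; $\al=0$; and $x=i$, using $-0=0$, $-\infty=\infty$, and $-\bar i=i$. Either a geometric argument or a direct algebraic computation works; I would present the geometric one, falling back on the M\"obius formulas only to nail down signs.

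Geometrically, $j\in G^-$ is an orientation-reversing isometry of~$\hyp$ whose boundary extension sends $t\in\r$ to $-t$ and fixes~$\infty$. Conjugation by an isometry preserves the type (hyperbolic/parabolic/elliptic), the magnitude of the translation length or rotation angle, and acts on fixed-point sets as~$j$ itself. For the hyperbolic element $\tau_{\al,\be}(\la)$, the fixed points $\al,\be$ map under~$j$ to $-\al,-\be$, and since the diffeomorphism~$j$ preserves the attracting vs.\ repelling character of a fixed point, $-\al$ is again attracting. Combining with preservation of the multiplier~$\la$, we conclude $j\tau_{\al,\be}(\la)j^{-1}=\tau_{-\al,-\be}(\la)$.

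For the parabolic element $\pi_\al(\la)$ the conjugate is a parabolic fixing~$-\al$, so it equals $\pi_{-\al}(c\la)$ for some $c\in\r^*$; since $\la\mapsto j\pi_\al(\la)j^{-1}$ is a one-parameter-subgroup homomorphism, $c$ is constant in~$\la$. From $j^2=e_G$ one gets $c^2=1$, and because~$j$ reverses orientation a horocycle traversed positively about~$\al$ is sent to the horocycle about~$-\al$ traversed negatively, forcing $c=-1$. For the elliptic $\rho_x(t)$, the centre~$x$ is sent to $j(x)=-\bar x$, and the counter-clockwise rotation by~$t$ becomes, under orientation-reversal, a clockwise rotation by~$t$, i.e.\ a counter-clockwise rotation by~$-t$; hence $j\rho_x(t)j^{-1}=\rho_{-\bar x}(-t)$.

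The only real obstacle is the careful bookkeeping of signs under the orientation-reversal by~$j$; as a safety net each identity can be verified unambiguously by substituting $j^{-1}(z)=-\bar z$ into the M\"obius formulas for $\tau_{\al,\be}(\la)$, $\pi_\al(\la)$, $\rho_x(t)$ and simplifying, using the reality of $\al,\be,\la,t$. For instance, in the parabolic case one finds $j\pi_\al(\la)j^{-1}(z)=-\overline{\pi_\al(\la)(-\bar z)}=\frac{(1-\la\al)z-\la\al^2}{\la z+(1+\la\al)}$, which is exactly $\pi_{-\al}(-\la)(z)$.
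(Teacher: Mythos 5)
Your proof is correct and matches the paper's intent exactly: the paper offers no written argument, stating only that these identities are ``easy to check geometrically,'' and your geometric reasoning together with the explicit substitution of $j(z)=-\bar z$ into the M\"obius formulas (which I have verified, e.g.\ your displayed parabolic computation is right) supplies precisely that check. The only slightly glib step is deducing $c^2=1$ from $j^2=e_G$ in the parabolic case, which tacitly assumes the constant $c$ is the same for $\al$ and $-\al$, but your algebraic fallback settles the sign unambiguously anyway.
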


Lifting these identities into~$\Gm$ we obtain the following proposition:

\begin{prop}
\label{J-action}
\begin{enumerate}[1)]
\item
We have
\begin{align*}
  &JT_{\al,\be}(\la)J^{-1}=J^{-1}T_{\al,\be}(\la)J=T_{-\al,-\be}(\la),\\
  &JP_{\al}(\la)J^{-1}=J^{-1}P{\al}(\la)J=P_{-\al}(-\la),\\
  &JR_x(t)J^{-1}=J^{-1}R_x(t)J=R_{-\bar x}(-t).
\end{align*}
\item
In particular
\begin{align*}
  &JT_{0,\infty}(\la)J^{-1}=J^{-1}T_{0,\infty}(\la)J=T_{0,\infty}(\la),\\
  &JP_0(\la)J^{-1}=J^{-1}P_0(\la)J=P_0(-\la),\\
  &JR_i(t)J^{-1}=J^{-1}R_i(t)J=R_i(-t),\\
  &JUJ^{-1}=J^{-1}UJ=U^{-1}.
\end{align*}
\end{enumerate}
\end{prop}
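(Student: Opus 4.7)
The plan is to use Proposition~\ref{j-action} together with the uniqueness of continuous lifts of one-parameter subgroups from $G$ to the covering $\Gm$. First I observe that by Proposition~\ref{J-squared} we have $J^2=e_{\Gm}$, so $J^{-1}=J$ and consequently $JXJ^{-1}=J^{-1}XJ$ for every $X\in\Gm$. This immediately reduces each line of the proposition to a single identity.

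For the three families in part~(1) the idea is uniform. Fix one of the homomorphisms $X\in\{T_{\al,\be},P_\al,R_x\}$, defined on $\r_+$ or $\r$. The map $\la\mapsto JX(\la)J^{-1}$ is again a continuous homomorphism from the same parameter group into~$\Gm$. Its image under the covering projection $\pi:\Gm\to G$ equals $j\cdot x(\la)\cdot j^{-1}$, where $x=\pi\circ X$, and by Proposition~\ref{j-action} this projected homomorphism coincides with $\tau_{-\al,-\be}(\la)$, $\pi_{-\al}(-\la)$, or $\rho_{-\bar x}(-t)$ respectively. Since $T_{-\al,-\be}$, $P_{-\al}(-\cdot)$ and $R_{-\bar x}(-\cdot)$ are, by their definition, the unique continuous lifts of these one-parameter subgroups starting from $e_{\Gm}$, and since $JX(0)J^{-1}=Je_{\Gm}J^{-1}=e_{\Gm}$ is the correct initial value, the two lifts must agree. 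This yields the three displayed formulas in part~(1).

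Part~(2) then follows by specializing the parameters in part~(1): take $(\al,\be)=(0,\infty)$, $\al=0$, and $x=i$, using that $-\bar i=i$. Finally, the identity $JUJ^{-1}=U^{-1}$ is a corollary of the $R_x$ case, since by the earlier computation $U=R_i(2\pi)$, so
\[
JUJ^{-1}=JR_i(2\pi)J^{-1}=R_{-\bar i}(-2\pi)=R_i(-2\pi)=U^{-1}.
\]

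The only substantive point, which I would not expect to cause real difficulty, is the uniqueness of lifts step: one must confirm that a continuous homomorphism from $\r_+$ or $\r$ into $G$ has a unique continuous lift into the covering $\Gm$ once its value at the identity is fixed. This is standard for coverings of Lie groups and is already implicit in the earlier definitions of $T_{\al,\be}$, $P_\al$ and~$R_x$, so the proof should be essentially mechanical once the reduction $J^{-1}=J$ is in place.
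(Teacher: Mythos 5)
Your proof is correct and uses essentially the same argument as the paper: the conjugated one-parameter family and its claimed value are two continuous lifts of the same path in $G$ that agree at the identity (note only that for $T_{\al,\be}$ the identity of the parameter group $\r_+$ is $\la=1$, not $\la=0$), hence coincide by uniqueness of path lifting. The one difference is your use of $J^2=e_{\Gm}$ to identify $J^{-1}$ with $J$; this is valid for the covering of Definition~\ref{def-Gm}, but the paper deliberately avoids it so that the proposition holds for an arbitrary Lie group $m$-fold covering of $G$ (as exploited in the remark that follows, where Proposition~\ref{J-action} is used to constrain $J^2$ without assuming it).
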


\begin{proof}
\begin{enumerate}[1)]
\item
The identity $j\rho_x(t)j^{-1}=\rho_{-\bar x}(-t)$
implies that the paths $t\mapsto JR_x(t)J^{-1}$ and $t\mapsto R_{-\bar x}(-t)$ in~$\Gm$
have the same projection in~$G$ and coincide at~$t=0$, thus
$$JR_x(t)J^{-1}=R_{-\bar x}(-t).$$
The proofs of the other identities are similar.
\item
The proofs are straightforward.
For the last identity recall that $U=R_i(2\pi)$ and $U^{-1}=R_i(-2\pi)$.
\end{enumerate}
\end{proof}

\begin{rem}
In Proposition~\ref{J-squared} we proved that $J^2=e$ using the explicit description of~$\Gm$ in Definition~\ref{def-Gm}
(compare also with the remark after Proposition~\ref{J-squared}).
Note that our proof of Proposition~\ref{J-action} works for any Lie group $m$-fold covering of~$G$,
not just for the one described in~\ref{def-Gm}.
If we forget about Proposition~\ref{J-squared},
we can use Proposition~\ref{J-action} to derive some information about~$J^2$.
For the reflection~$j$ we have $j^2=e$, hence $J^2$ is in the pre-image of~$e$, i.e.\ $J^2=U^q$ for some integer~$q$.
The identities $JUJ^{-1}=U^{-1}$ and $J^{-1}UJ=U^{-1}$ imply $JU=U^{-1}J$ and $UJ=JU^{-1}$.
We have $J^3=J^2J=U^qJ$ and, using $UJ=JU^{-1}$, we obtain $J^3=U^qJ=JU^{-q}$.
On the other hand $J^3=JJ^2=JU^q$.
Thus $JU^{-q}=JU^q$ and therefore $U^{2q}=e$.
For odd~$m$ this is only possible for $q\equiv0~(\mod m)$, hence $J^2=e$,
while for even~$m$ we could have $q\equiv0~(\mod m)$ and hence $J^2=e$ or $q\equiv m/2~(\mod m)$ and hence $J^2=U^{m/2}$, $J^4=e$.
\end{rem}

\subsection{Level function}

\label{subsec-level}

\begin{mydef}
Let $\De$ be the set of all elliptic elements of order~$2$ in~$G^+$.
Let $\Xi$ be the complement of the set~$\De$ in~$G^+$, i.e.\ $\Xi=G^+\backslash\De$.
% The set $\Xi$ consists of all hyperbolic and parabolic elements of~$G^+$ (including the identity element).
There exists a homeomorhism~$G^+\to\s^1\times\c$
such that $\De$ corresponds to~$\{*\}\times\c$
and $\Xi=G^+\backslash\De$ corresponds to $(\s^1\backslash\{*\})\times\c$
(see, for example, \cite{JN}).
From this description it follows in particular that the subset~$\Xi$ is simply connected.
The pre-image~$\tilde\Xi\subset\Pm$ of~$\Xi$ consists of $m$ connected components,
each of which is homeomorphic to~$\Xi$.
Each connected component of the subset~$\tilde\Xi$ contains one and only one element of
$$\pi^{-1}(e_G)=Z(\Pm)=\{e_{\Gm},U,\dots,U^{m-1}\}.$$
Let $\tilde\Xi_k$ be the connected component of~$\tilde\Xi$ that contains $U^k$.
Let $a$ be an element of~$\Pm$.
For $a\in\tilde\Xi_k$ we set $\levm(a)=k$.
Any $a\not\in\tilde\Xi$ can be written as $a=R_x(\pi)\cdot U^k$ for some $x\in\hyp$ and some integer~$k$. 
We set $\levm(R_x(\pi)\cdot U^k)=k$ for integer~$k$.
We call the function $\levm:\Pm\to\z/m\z$ the {\it level function}.
We say that $a$ is {\it at the level\/}~$k$ if $\levm(a)=k$.
\end{mydef}

\begin{rem}
Any hyperbolic or parabolic element in~$\Pm$
is of the form $T_{\al,\be}(\la)\cdot U^k$ or $P_{\al}(\la)\cdot U^k$ respectively.
For elements written in this form we have
$$\levm(T_{\al,\be}(\la)\cdot U^k)=k,\quad\levm(P_{\al}(\la)\cdot U^k)=k.$$
\end{rem}

%\begin{rem}
%Any elliptic element in~$\Pm$ is of the form $R_x(t)$.
%For elements written in this form we have $\levm(R_x(t))=k$ if and only if $t\in(-\pi+2\pi k,\pi+2\pi k]$.
%\end{rem}

%\begin{mydef}
%The canonical lift of an element~$c$ in~$G^+$ into~$\Pm$ is an element~$\tilde c$ in~$\Pm$
%such that~$\pi(\tilde c)=c$ and~$\levm(\tilde c)=0$.
%\end{mydef}

%We will use the following property of the level function (compare with \cite{NP:2009}):

%\begin{prop}
%\label{prop-inv}
%The equation $\levm(A^{-1})=-\levm(A)$ is satisfied for any element~$A$ in~$\Pm$
%with exception of pre-images of elliptic elements of order~$2$.
%\end{prop}

\begin{prop}
\label{prop-conj}
For any elements~$B$ and $C$ in~$\Pm$ we have
$$\levm(BCB^{-1})=\levm(C).$$
\end{prop}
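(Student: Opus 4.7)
The plan is to treat the two cases of the definition of $\levm$ separately, using throughout that the inner automorphism $C\mapsto BCB^{-1}$ is a self-homeomorphism of $\Pm$ whose projection to $G^+$ preserves the subset~$\De$ of order-$2$ elliptic elements (since conjugation preserves both the type and the order of an element), and hence preserves both $\tilde\Xi$ and its complement $\Pm\setminus\tilde\Xi=\pi^{-1}(\De)$. I will also use repeatedly that $U$ is central in $\Pm$, so $BU^kB^{-1}=U^k$ for every integer $k$.

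In the first case, suppose $C\in\tilde\Xi_k$, so $\levm(C)=k$. Then $B\tilde\Xi_k B^{-1}$ is a connected subset of $\tilde\Xi$ and therefore lies in a single component $\tilde\Xi_j$. Since $U^k\in\tilde\Xi_k$ is fixed by the inner automorphism, we get $U^k\in\tilde\Xi_j$, which forces $j=k$. Hence $\levm(BCB^{-1})=k=\levm(C)$.

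In the second case, $C=R_x(\pi)\cdot U^k$, so $\levm(C)=k$. Using centrality of $U^k$, I write $BCB^{-1}=(BR_x(\pi)B^{-1})\cdot U^k$, and it suffices to show that $BR_x(\pi)B^{-1}$ is at level $0$. The restriction of $\pi$ to $\pi^{-1}(\De)\to\De$ is an $m$-fold covering of the simply connected space $\De\cong\hyp$, hence trivial: $\pi^{-1}(\De)$ decomposes into $m$ disjoint sheets, each mapped homeomorphically onto $\De$. The continuous family $\{R_y(\pi):y\in\hyp\}$ is connected and so lies in a single sheet, which by the definition of $\levm$ is the level-$0$ sheet; the remaining sheets are its translates by $U,U^2,\dots,U^{m-1}$. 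The continuous map $B\mapsto BR_x(\pi)B^{-1}$ from the connected group $\Pm$ to $\pi^{-1}(\De)$ sends $e_{\Pm}$ to $R_x(\pi)$, which lies in the level-$0$ sheet, so by connectedness the whole image lies in the level-$0$ sheet. Therefore $\levm(BR_x(\pi)B^{-1})=0$, and $\levm(BCB^{-1})=k=\levm(C)$.

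The principal technical point is the sheet structure of $\pi^{-1}(\De)\to\De$, namely that its $m$ components are precisely the translates $U^k\cdot\{R_y(\pi):y\in\hyp\}$ for $k=0,\dots,m-1$. Disjointness is immediate: if $R_y(\pi)=R_{y'}(\pi)\cdot U^{j}$, then projecting to $G^+$ gives $\rho_y(\pi)=\rho_{y'}(\pi)$, hence $y=y'$ and $U^{j}=e_{\Pm}$; exhaustiveness then follows from the $m$-fold cardinality together with the simple-connectedness of $\De$. This is the only nontrivial ingredient; everything else is a continuity and centrality bookkeeping.
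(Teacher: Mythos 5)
Your argument is correct. The paper states Proposition~\ref{prop-conj} without giving a proof, so there is no argument of the authors' to compare against; but your route — conjugation preserves the partition of $\Pm$ into $\tilde\Xi$ and $\pi^{-1}(\De)$, the level function is locally constant on each piece, $U$ is central, and connectedness then forces the level to be preserved — is the natural one and is exactly in the spirit of how the paper itself argues that $R_x(2\pi k)=U^k$ is independent of~$x$. Both cases check out: in the first, $B\tilde\Xi_kB^{-1}$ is a connected subset of $\tilde\Xi$ containing the fixed central element $U^k$, hence lies in $\tilde\Xi_k$; in the second, your identification of the $m$ components of $\pi^{-1}(\De)$ with the translates $U^j\cdot\{R_y(\pi)\st y\in\hyp\}$ (disjointness from uniqueness of the fixed point of an order-two rotation, exhaustiveness from the triviality of an $m$-fold cover of the simply connected $\De$) correctly pins down the level-$0$ sheet, and connectedness of $\Pm$ keeps $BR_x(\pi)B^{-1}$ in that sheet. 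The only ingredient taken on faith is the continuity of $y\mapsto R_y(\pi)$, which the paper likewise asserts without proof (it is standard continuity of lifts of a continuously varying family of paths with fixed initial point), so your write-up sits at the same level of rigour as the surrounding text.
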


%\begin{prop}
%\label{prop-product-of-crossing-hyps}
%If the axes of two hyperbolic elements $A$ and~$B$ in~$\Pm$ intersect then $\levm(AB)=\levm(A)+\levm(B)$.
%\end{prop}

%\begin{mydef}
%A {\it sequential set\/} of signature~$\cdots$ is XXX.
%\end{mydef}

%\begin{prop}
%\label{prop-product-dont-intersect}
%Let $(C_1,C_2,C_3)$ be a triple of elements in~$\Pm$ with~$C_1\cdot C_2\cdot C_3=e$.
%Assume that the images of~$C_1$, $C_2$, $C_3$ in~$G^+$
%form a sequential set of signature $(0;l_h,l_p,l_e:p_1,\dots,p_{l_e})$ with~$l_h+l_p+l_e=3$.
%Then
%\begin{align*}
%  &\levm(C_1\cdot C_2)=\levm(C_1)+\levm(C_2)+1~\text{if the element}~C_3~\text{is not of order}~2,\\
%  &\levm(C_1\cdot C_2)=-\levm(C_1)-\levm(C_2)-1~\text{if the element}~C_3~\text{is of order}~2.
%\end{align*}
%\end{prop}

%\begin{lem}
%We have $\levm(JCJ)=0$ for any hyperbolic or parabolic element~$C\in\Pm$ with $\levm(C)=0$.
%\end{lem}

%\begin{proof}
%Hyperbolic and parabolic elements~$C$ of~$\Pm$ with $\levm(C)=0$
%are of the form~$T_{\al,\be}(\la)$ and $P_{\al}(\la)$ respectively.
%According to Proposition~\ref{J-action}
%we have $JT_{\al,\be}(\la)J=T_{-\al,-\be}(\la)$ and $JP_{\al}(\la)J=P_{-\al}(-\la)$.
%\end{proof}

\begin{prop}
\label{J-conj}
We have $\levm(JCJ)=-\levm(C)$ for any hyperbolic or parabolic element~$C$ in~$\Pm$.
\end{prop}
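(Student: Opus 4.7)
The plan is to reduce the claim directly to the standard forms for hyperbolic and parabolic elements and then apply the conjugation identities from Proposition~\ref{J-action} term by term.

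First, I would use the Remark following the definition of the level function to write $C$ in canonical form: if $C$ is hyperbolic, then $C = T_{\al,\be}(\la)\cdot U^k$ for some real $\al,\be$, some $\la>0$ and some integer $k$ with $\levm(C)=k$; if $C$ is parabolic, then $C = P_{\al}(\la)\cdot U^k$ for some real $\al,\la$ and some integer $k$ with $\levm(C)=k$. Second, I would invoke Proposition~\ref{J-squared} to replace $JCJ$ by $JCJ^{-1}$, since $J^2 = e_{\Gm}$. This turns the computation into a conjugation, which distributes over products.

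Third, I would apply Proposition~\ref{J-action} factor by factor. In the hyperbolic case,
\begin{equation*}
  JCJ^{-1} = \bigl(JT_{\al,\be}(\la)J^{-1}\bigr)\cdot\bigl(JU^kJ^{-1}\bigr)
          = T_{-\al,-\be}(\la)\cdot U^{-k},
\end{equation*}
and by the same Remark this element has level $-k$. In the parabolic case,
\begin{equation*}
  JCJ^{-1} = \bigl(JP_{\al}(\la)J^{-1}\bigr)\cdot\bigl(JU^kJ^{-1}\bigr)
          = P_{-\al}(-\la)\cdot U^{-k},
\end{equation*}
which again has level $-k$. In both cases we conclude $\levm(JCJ) = -k = -\levm(C)$.

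There is really no serious obstacle here, as every ingredient is already in place: the canonical form is supplied by the Remark after the definition of $\levm$, the identity $J^{-1}=J$ comes from Proposition~\ref{J-squared}, and the conjugation formulas come from Proposition~\ref{J-action}. The only point that deserves a brief sanity check is that the conjugation identities are compatible with products, which is immediate since conjugation is a group homomorphism. Thus the proposition follows by a one-line substitution once the element $C$ has been written in canonical form.
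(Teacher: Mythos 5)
Your proof is correct and follows essentially the same route as the paper: write $C$ as $T_{\al,\be}(\la)\cdot U^k$ or $P_{\al}(\la)\cdot U^k$, conjugate factor by factor using Proposition~\ref{J-action} (with $J^{-1}=J$ from Proposition~\ref{J-squared}), and read off the level $-k$ from the canonical form. The only difference is that you make the use of $J^2=e_{\Gm}$ explicit, which the paper leaves implicit.
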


\begin{proof}
Hyperbolic and parabolic elements~$C$ of~$\Pm$
are of the form $T_{\al,\be}(\la)\cdot U^k$ and $P_{\al}(\la)\cdot U^k$ respectively.
According to Proposition~\ref{J-action}
we have $JT_{\al,\be}(\la)J=T_{-\al,-\be}(\la)$, $JP_{\al}(\la)J=P_{-\al}(-\la)$ and $JUJ=U^{-1}$,
hence $J(T_{\al,\be}(\la)\cdot U^k)J=T_{-\al,-\be}(\la)\cdot U^{-k}$
and $J(P_{\al}(\la)\cdot U^k)J=P_{-\al}(-\la)\cdot U^{-k}$.
\end{proof}

\begin{prop}
\label{Nm-conj}
We have $\levm(FCF^{-1})=-\levm(C)$ for any hyperbolic or parabolic element~$C$ in~$\Pm$ and any element~$F$ in~$\Nm$.
\end{prop}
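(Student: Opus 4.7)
The plan is to reduce conjugation by an arbitrary element of $\Nm$ to the two cases we have already handled: conjugation by an element of $\Pm$ (Proposition~\ref{prop-conj}) and conjugation by the specific reflection lift~$J$ (Proposition~\ref{J-conj}). The bridge between them is the coset decomposition $\Nm=J\cdot\Pm$, which follows from $\Pm=J\cdot\Nm$ and the fact that $J^2=e_{\Gm}$ (Proposition~\ref{J-squared}), giving also $J^{-1}=J$.

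More precisely, given $F\in\Nm$, write $F=JB$ with $B=JF\in\Pm$ (note $JF\in\Pm$ because the product of two orientation-reversing elements is orientation-preserving). Then
\[
  FCF^{-1} \;=\; (JB)\,C\,(JB)^{-1} \;=\; J\,(BCB^{-1})\,J^{-1} \;=\; J\,(BCB^{-1})\,J,
\]
where the last equality uses $J^{-1}=J$. Set $C':=BCB^{-1}$. Since the projection $\Pm\to G^+$ is a group homomorphism and the hyperbolic/parabolic type of an element of $G^+$ is conjugation-invariant (it is determined by the fixed-point behaviour on $\hyp\cup\partial\hyp$), the element $C'$ is again hyperbolic or parabolic, so Proposition~\ref{J-conj} applies to it.

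Combining the two inputs, Proposition~\ref{prop-conj} gives $\levm(C')=\levm(BCB^{-1})=\levm(C)$, and then Proposition~\ref{J-conj} gives $\levm(JC'J)=-\levm(C')$. Chaining these,
\[
  \levm(FCF^{-1}) \;=\; \levm(JC'J) \;=\; -\levm(C') \;=\; -\levm(C),
\]
as required. There is no real obstacle here; the only point that needs a moment's care is to check that $C'$ still lies in the class of hyperbolic or parabolic elements so that Proposition~\ref{J-conj} is legitimately applicable, and that $J^{-1}=J$ so that the conjugate $JC'J^{-1}$ coming out of the computation really matches the form $JC'J$ used in that proposition.
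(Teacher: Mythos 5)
Your proof is correct and is essentially the paper's argument: both decompose $F$ via the coset $\Nm=J\cdot\Pm=\Pm\cdot J$ and then combine Propositions~\ref{prop-conj} and~\ref{J-conj}. The only difference is that the paper writes $F=A\cdot J$ (so Proposition~\ref{J-conj} is applied directly to $C$ and no check that a conjugate remains hyperbolic or parabolic is needed), whereas your choice $F=JB$ applies it to $BCB^{-1}$, requiring the easy observation about conjugation-invariance of type that you correctly supply.
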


\begin{proof}
We can write the element~$F\in\Nm$ as $F=A\cdot J$ for some~$A\in\Pm$, hence $FCF^{-1}=A(JCJ)A^{-1}$.
According to Proposition~\ref{prop-conj} we have $\levm(A(JCJ)A^{-1})=\levm(JCJ)$
and according to Proposition~\ref{J-conj} we have $\levm(JCJ)=-\levm(C)$.
\end{proof}

\section{Higher Spin Klein Surfaces and Real Arf Functions}

\label{sec-klein-spin}

\subsection{Higher Spin Riemann Surfaces and Lifts of Fuchsian Groups}

\begin{mydef}
Consider a torsion-free Fuchsian group~$\Ga$ and the corresponding hyperbolic Riemann surface~$P=\hyp/\Ga$.
Let $L\to P$ be a complex line bundle on~$P$ and $E\to\hyp$ the induced complex line bundle over $\hyp$.
With respect to a trivialization $E\simeq\hyp\times\c$ of the bundle $E$,
the action of $\Ga$ on $E$ is given by
$$g\cdot(z,t)=(g(z),\de(g,z)\cdot t),$$
where $\de:\Ga\times\h\to\c^*$ is a map
such that the function $\de_g:\hyp\to\hyp$ given be $\de_g(z)=\de(g,z)$ is holomorphic for any $g\in\Ga$ and
$$\de_{g_2\cdot g_1}=(\de_{g_2}\circ g_1)\cdot\de_{g_1}$$
for any $g_1,g_2\in\Ga$.
The map $\de$ is called the {\it transition map} of the bundle $L\to P$ with respect to the given trivialization.
\end{mydef}

\begin{rem}
In particular, if $L$ is the cotangent bundle of the surface $P$,
then the transition map can be chosen so that $\de_g=(g')^{-1}$.
If $L$ is the tangent bundle of the surface $P$,
then the transition map can be chosen so that $\de_g=g'$.
Let $L_1\to P$, $L_2\to P$ be two complex line bundles over a Riemann surface $P$,
and let $\de_1$ resp.\ $\de_2$ be their transition maps,
then $\de_1\cdot\de_2$ is a transition map of the bundle $L_1\otimes L_2\to P$.
In particular, if $\de$ is the transition map of the bundle $L\to P$,
then $\de^m$ is a transition map of the bundle $L^m=L\otimes\cdots\otimes L\to P$
(with respect to the induced trivialization).
\end{rem}

\begin{mydef}
An {\it $m$-spin structure} on a Riemann surface~$P$
is a transition map $\de$ of a complex line bundle $L\to P$
that satisfies the condition $\de_g^m=(g')^{-1}$,
\ie the induced transition map $\de^m$ of the bundle $L^m\to P$
coincides with the transition map of the cotangent bundle of~$P$.
\end{mydef}

%\begin{mydef}
%An {\it $m$-co-spin structure} on a Riemann surface~$P$
%is a transition map $\de$ of a complex line bundle $L\to P$
%that satisfies the condition $\de_g^m=g'$,
%\ie the induced transition map $\de^m$ of the bundle $L^m\to P$
%coincides with the transition map of the tangent bundle of~$P$.
%\end{mydef}

%\begin{rem}
%There is a one-to-one correspondence between $m$-spin and $m$-co-spin structures on a Riemann surface
%given by taking $L$ to~$L^{-1}$ resp., in terms of transition maps, by taking $\de$ to $\de^{-1}$.
%In the following we consider the $m$-spin structures.
%\end{rem}

\begin{rem}
A complex line bundle $L\to P$ is said to be {\it $m$-spin}
if the bundle $L^m\to P$ is isomorphic to the cotangent bundle of~$P$.
For a compact Riemann surface $P$
there is a 1-1-correspondence between $m$-spin structures on~$P$ and $m$-spin bundles over~$P$.
\end{rem}

%\begin{mydef}
%Let $P$ be a compact Riemann surface.
%A line bundle~$e:L\to P$ is an $m$-spin bundle (of rank~$1$)
%if the $m$-fold tensor power~$L\otimes\cdots\otimes L$ coincides with the cotangent bundle of~$P$.
%\end{mydef}

\begin{rem}
For $m=2$ we obtain the classical notion of a spin bundle.
\end{rem}

\begin{mydef}
Let $\Ga$ be a Fuchsian group.
A {\it lift of the Fuchsian group}~$\Ga$ into $\Pm$ is a subgroup $\Ga^*$ of $\Pm$
such that the restriction of the covering map $\Pm\to G^+$ to $\Ga^*$ is an isomorphism $\Ga^*\to\Ga$.
\end{mydef}

The following result was proved in~\cite{NP:2005,NP:2009}:

\begin{thm}
\label{thm-corresp-lifts-bundles}
Let $\Ga$ be a Fuchsian group without elliptic elements.
There is a 1-1-correspondence between the lifts of~$\Ga$ into the $m$-fold cover of~$\Aut_+(\hyp)$
and $m$-spin bundles on the Riemann surface $\hyp/\Ga$.
\end{thm}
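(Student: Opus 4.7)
The strategy is to construct explicit inverse bijections using the transition map formalism from the preceding remark. The definition of $\Pm$ in Definition~\ref{def-Gm} is tailored for this: an element $(g,f)\in\Pm$ already packages an orientation-preserving isometry $g\in G^+$ together with a holomorphic $m$-th root $f$ of $g'$ on $\hyp$, which is precisely the data of a value of an $m$-spin transition map at $g$ (up to the $f\leftrightarrow f^{-1}$ swap between tangent and cotangent conventions).

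From a lift to a bundle: given a lift $\Ga^*\subset\Pm$, the isomorphism $\pi|_{\Ga^*}:\Ga^*\to\Ga$ writes each element uniquely as $(g,f_g)$ with $f_g:\hyp\to\c^*$ holomorphic and $f_g^m=g'$. Closure of $\Ga^*$ under the product in Definition~\ref{def-Gm} translates into the cocycle identity $f_{g_2g_1}=(f_{g_2}\circ g_1)\cdot f_{g_1}$, so $\de_g:=f_g^{-1}$ is an $m$-spin transition map, and the quotient $L:=(\hyp\times\c)/\Ga^*$, with $\Ga^*$ acting by $(g,f_g)\cdot(z,t):=(g(z),f_g(z)^{-1}t)$, is the associated $m$-spin line bundle on $P=\hyp/\Ga$.

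Conversely, given an $m$-spin bundle $L\to P$, the torsion-freeness of $\Ga$ makes $\hyp\to P$ a genuine covering and the pullback $L|_{\hyp}$ trivializable (since $\hyp$ is simply connected). Any trivialization produces a transition map $\de:\Ga\times\hyp\to\c^*$ satisfying $\de_g^m=(g')^{-1}$ together with the cocycle condition. Setting $\Ga^*:=\{(g,\de_g^{-1})\st g\in\Ga\}$, these two conditions say respectively that $\Ga^*\subset\Pm$ and that $\Ga^*$ is a subgroup, while $\pi|_{\Ga^*}$ is manifestly an isomorphism. The two constructions invert one another by direct unwinding of formulas.

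The delicate point is well-definedness of the bundle-to-lift map. Two trivializations of $L|_{\hyp}$ differ by a holomorphic $\varphi:\hyp\to\c^*$, changing $\de_g$ by the coboundary $(\varphi\circ g)/\varphi$; the $m$-spin condition forces $(\varphi\circ g/\varphi)^m\equiv1$, so each such coboundary is a constant $m$-th root of unity, giving a homomorphism $\Ga\to\pi^{-1}(e_G)=Z(\Pm)\cong\z/m\z$. One must verify that the $m$-spin bundle remembers just enough structure (an isomorphism class of $L$ together with the rigidifying datum $L^{\otimes m}\simeq T^*P$) so that this trivialization ambiguity matches exactly the $\Hom(\Ga,\z/m\z)$-torsor structure on the set of lifts of $\Ga$. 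This cohomological bookkeeping --- tying the non-uniqueness of trivializations to the cyclic centre of $\Pm$ --- is where the real content of the theorem lies; the remaining verifications are mechanical consequences of Definition~\ref{def-Gm}.
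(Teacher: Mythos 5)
Your construction is essentially the one the paper sketches (and attributes to \cite{NP:2005,NP:2009}): the dictionary between a lift $\Ga^*=\{(g,\de_g)\}$ and a transition map via the quotient $(\hyp\times\c)/\Ga^*$, with the cocycle identity coming from the product law in Definition~\ref{def-Gm}; your extra care with the $f_g\leftrightarrow f_g^{-1}$ tangent/cotangent swap is if anything more precise than the paper's sketch. The one place you stop short --- the ``cohomological bookkeeping'' for well-definedness of the bundle-to-lift map --- is indeed the only nontrivial point, but your proposed resolution is slightly misaimed. If two trivializations of $L|_{\hyp}$ both yield transition maps with $\de_g^m=(g')^{-1}$, then as you say the coboundary $(\varphi\circ g)/\varphi$ is a constant $m$-th root of unity for every $g$; but then $\varphi^m$ is $\Ga$-invariant, hence descends to a nonvanishing holomorphic function on $P=\hyp/\Ga$. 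For compact $P$ this is constant, so $\varphi$ is constant and the ambiguity you worry about is vacuous: the compatible trivialization is unique up to a scalar and the bundle-to-lift map is well defined on the nose. The torsor structure of $\Hom(\Ga,\z/m\z)$ on the set of lifts does not act within the fibre of a single bundle; it matches the twisting of $L$ by the (pairwise non-isomorphic) $m$-torsion flat line bundles $F_\zeta$, which is why both sides have the same cardinality $m^{2g}$. For $\hyp/\Ga$ non-compact this argument fails and the statement must be read, as the paper's own remark indicates, as a correspondence with $m$-spin \emph{structures} (transition maps) rather than isomorphism classes of bundles, since on an open Riemann surface all line bundles are holomorphically trivial. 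With that reading and the compactness observation supplied, your argument closes up and agrees with the paper's.
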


We will sketch the proof here:
A lift of~$\Ga$ is of the form 
$$\Ga^*=\{(g,\de_g)\st g\in\Ga,~\de_g\in\Hol(\hyp,\c^*),~\de_g^m=\frac{d}{dz}g\}.$$
%lift->bundle
The corresponding $m$-spin bundle $e_{\Ga^*}:L_{\Ga^*}\to P=\hyp/\Ga$ is of the form
$$L_{\Ga^*}=(\hyp\times\c)/\Ga^*\to\hyp/\Ga=P,$$
where the action of~$\Ga^*$ on~$\hyp\times\c$ is given by
$$(g,\de_g)\cdot(z,x)=(g(z),\de_g(z)\cdot x).$$
%bundle->lift
Every $m$-spin bundle on $P=\hyp/\Ga$ is obtained as $e_{\Ga^*}$ for some lift $\Ga^*$ of~$\Ga$.

\begin{rem}
A more general correspondence between a Fuchsian group~$\Ga$ (with or without elliptic elements)
and $m$-spin bundles on the orbifold $\hyp/\Ga$
was established in~\cite{NP:2011, NP:2013}.
\end{rem}

\subsection{Lifts of Fuchsian Groups and Arf Functions}

Lifts of a Fuchsian group~$\Ga$ into $\Pm$ can be described by means of associated $m$-Arf functions,
certain functions on the space of homotopy classes of simple closed curves on~$P=\hyp/\Ga$ with values in~$\z/m\z$
described by simple geometric properties.

\begin{mydef}
\label{def-m-arf}
Let $\Ga$ be a Fuchsian group that consists of hyperbolic elements.
Let the corresponding Riemann surface $P=\hyp/\Ga$ be a compact surface with finitely many holes.
Let $\piorb(P)=\piorb(P,p)$ be the fundamental group of~$P$ with respect to a point~$p$.
We denote by $\piorbO(P)$ the set of all non-trivial elements of $\piorb(P,p)$
that can be represented by simple closed curves.
An {\it $m$-Arf function\/} is a function
$$\arf:\piorbO(P)\to\z/m\z$$
satisfying the following conditions
\begin{enumerate}[1.]
\item
\label{arf-prop-conj}
$\arf(bab^{-1})=\arf(a)$ for any elements~$a,b\in\piorbO(P)$,
\item
\label{arf-prop-inv}
$\arf(a^{-1})=-\arf(a)$ for any element~$a\in\piorbO(P)$,
\item
\label{arf-prop-cross}
$\arf(ab)=\arf(a)+\arf(b)$
for any
elements~$a$ and~$b$ which can be represented by a pair of simple closed curves in $P$
intersecting at exactly one point~$p$ with
intersection number not equal to zero.
%$\<a,b\>\ne0$,
\item
\label{arf-prop-neg-seqset}
$\arf(ab)=\arf(a)+\arf(b)-1$
for any elements~$a,b\in\piorbO(P)$ such that the element~$ab$ is in~$\piorbO(P)$
and the elements~$a$ and~$b$ can be represented by a pair of simple closed curves in $P$
intersecting at exactly one point~$p$
with intersection number equal to zero
%$\<a,b\>=0$
and placed in a neighbourhood of the point~$p$ as shown in Figure~\ref{fig-neg-pair}.
\begin{figure}[h]
  \begin{center}
    \forcehmode
      \bgroup
        \beginpicture
          \setcoordinatesystem units <25 bp,25 bp>
          \multiput {\phantom{$\bullet$}} at -2 0 2 2 /
          \plot 0 0 -2 1 /
          \arrow <7pt> [0.2,0.5] from 0 0 to -1 0.5
          \plot 0 0 -1 2 /
          \arrow <7pt> [0.2,0.5] from -1 2 to -0.5 1
          \plot 0 0 1 2 /
          \arrow <7pt> [0.2,0.5] from 0 0 to 0.5 1
          \plot 0 0 2 1 /
          \arrow <7pt> [0.2,0.5] from 2 1 to 1 0.5
          \put {$a$} [br] <0pt,2pt> at -2 1
          \put {$a$} [br] <0pt,2pt> at -1 2
          \put {$b$} [bl] <0pt,2pt> at 2 1
          \put {$b$} [bl] <0pt,2pt> at 1 2
          \put {$\bullet$} at 0 0
          \put {$p$} [l] <5pt,-3pt> at 0 0 
        \endpicture
      \egroup
  \end{center}
  \caption{$\arf(ab)=\arf(a)+\arf(b)-1$}
  \label{fig-neg-pair}
\end{figure}
\end{enumerate}
\end{mydef}

\begin{rem}
In the case $m=2$ there is a 1-1-correspondence between the $2$-Arf functions in the sense of Definition~\ref{def-m-arf}
and Arf functions in the sense of~\cite{Nbook}, Chapter~1, Section~7 and \cite{N1991}.
Namely, a function $\arf:\piorbO(P)\to\z/2\z$ is a $2$-Arf function
if and only if $\om=1-\arf$ is an Arf function in the sense of~\cite{Nbook}.
\end{rem}

\myskip\noindent
Higher Arf functions were introduced in~\cite{NP:2005, NP:2009}, where the following result was shown:

\begin{thm}
\label{thm-corresp-lifts-arf}
There is a 1-1-correspondence between the lifts of~$\Ga$ into $\Pm$ and $m$-Arf functions on $P=\hyp/\Ga$.
\end{thm}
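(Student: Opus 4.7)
The plan is to construct explicit maps in both directions and verify bijectivity. For the forward direction, given a lift $\Ga^*\subset\Pm$ of $\Ga$, I identify $\piorb(P,p)$ with $\Ga$ via the universal covering $\hyp\to P$. Each $c\in\piorbO(P)$ then corresponds to a hyperbolic element $g_c\in\Ga$, which has a unique preimage $\tilde g_c\in\Ga^*$. I set
\[
  \arf_{\Ga^*}(c) = \levm(\tilde g_c)\in\z/m\z.
\]
This is well-defined because $\Ga$ consists entirely of hyperbolic elements, so $\tilde g_c$ has a canonical presentation $\tilde g_c = T_{\al,\be}(\la)\cdot U^k$ to which the level function applies.

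The verification of the four axioms of Definition~\ref{def-m-arf} is the first technical step. Property~\ref{arf-prop-conj} follows directly from Proposition~\ref{prop-conj} applied to $\tilde g_{bab^{-1}} = \tilde g_b\tilde g_a\tilde g_b^{-1}$. For Property~\ref{arf-prop-inv}, writing $\tilde g_a = T_{\al,\be}(\la)\cdot U^k$ and using the centrality of $U$ together with $T_{\al,\be}(\la)^{-1}=T_{\be,\al}(\la)$, one obtains $\tilde g_a^{-1} = T_{\be,\al}(\la)\cdot U^{-k}$, so $\levm(\tilde g_a^{-1}) = -k$. Properties~\ref{arf-prop-cross} and~\ref{arf-prop-neg-seqset} are geometric: for simple closed curves $a,b$ intersecting transversally at a single point, I would analyse the product $\tilde g_a\tilde g_b$ by following the lifted axes in $\hyp$ near a preimage of the crossing. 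In the nonzero-intersection case the local picture gives clean additivity $\levm(\tilde g_a\tilde g_b) = \levm(\tilde g_a)+\levm(\tilde g_b)$; in the configuration of Figure~\ref{fig-neg-pair} an extra clockwise rotation of the axis direction appears, contributing a factor of $U^{-1}$ and hence the $-1$ correction.

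For the inverse direction, since $P$ is compact with finitely many holes, $\Ga$ is a free group. I choose free generators $g_1,\ldots,g_n$ each representable by a simple closed curve and define a lift by sending each $g_i$ to the unique element $\tilde g_i\in\Pm$ over $g_i$ with $\levm(\tilde g_i) = \arf(g_i)$. This extends freely to a homomorphism $\Ga\to\Pm$ whose image is a lift $\Ga^*$. It remains to check that the forward map applied to this $\Ga^*$ recovers $\arf$ on every $c\in\piorbO(P)$, not just on the chosen generators; here I use axioms~\ref{arf-prop-conj}--\ref{arf-prop-neg-seqset} together with Proposition~\ref{prop-conj} to propagate the level information along the word expressing $c$ in the generators, essentially reversing the local product computations used in the forward direction.

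The main obstacle is the geometric analysis behind axioms~\ref{arf-prop-cross} and~\ref{arf-prop-neg-seqset}: one must track how $\levm(\tilde g_a\tilde g_b)$ depends on the topological configuration of the axes of $g_a$ and $g_b$ near their intersection. Identifying the extra $U^{-1}$ in the tangential case requires a continuous deformation argument in $\Pm$ connecting the local picture at the crossing to a standard position --- equivalently, a comparison of the winding of a natural tangent-direction field along the product path with that along a single hyperbolic element. This local calculation in $\Pm$, combined with the free-group structure on the inverse side, yields the desired bijection.
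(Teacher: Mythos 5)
The forward half of your construction --- defining $\arf_{\Ga^*}$ as the level function composed with the inverse of the covering isomorphism $\Ga^*\to\Ga$, and checking the four axioms of Definition~\ref{def-m-arf} --- is exactly the construction the paper sketches; the paper itself delegates the verification of properties~3 and~4 to \cite{NP:2005,NP:2009}, and your outline of the axis-tracking argument is consistent in spirit with what is done there. The genuine gap is in the inverse direction. Your claim that ``$\Ga$ is a free group'' holds only when $P$ has at least one hole; Definition~\ref{def-m-arf} allows $P$ to be closed, and this is precisely the case the paper needs (in the proof of Theorem~\ref{corresp-lifts-arf} the correspondence is applied to the cocompact group $\Ga=\hGa^+$ uniformising the closed surface $P$, and Proposition~\ref{prop-arf-compact} rests on it). For a closed surface of genus $g$ the group $\Ga$ carries the relation $\prod_{i=1}^{g}[a_i,b_i]=1$, and for any choice of preimages $A_i,B_i\in\Pm$ of the generators one gets $\prod_{i=1}^{g}[A_i,B_i]=U^{q}$ with $q$ independent of the choice (in fact $q\equiv 2-2g~(\mod m)$); your assignment extends to a homomorphism only after you prove $q\equiv0~(\mod m)$, which is exactly the congruence governing the existence of both lifts and $m$-Arf functions. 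Your proposal never confronts this obstruction, so the inverse map is simply not constructed in the closed case.

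A second, subtler gap affects even the free case: the step ``propagate the level information along the word expressing $c$ in the generators'' is not available as stated. Definition~\ref{def-m-arf} constrains $\arf$ only on elements of $\piorbO(P)$, that is, on classes represented by \emph{simple} closed curves in specific local configurations, whereas the intermediate subwords of a word expressing an arbitrary $c\in\piorbO(P)$ need not be simple. Showing that an $m$-Arf function is determined by its values on a standard generating set --- equivalently, that the Arf function induced by your constructed lift coincides with the given one on all of $\piorbO(P)$, not just on the generators --- is a substantive theorem (it is the content of the results of \cite{NP:2009} quoted here as Theorem~\ref{thm-arf-construction}) and cannot be obtained by formally reversing the forward computation.
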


We will sketch the construction here:
Let $\Psi:\hyp\to P$ be the natural projection.
Choose $q\in\Psi^{-1}(p)$ and let $\Phi:\lat\to\piorb(P)$ be the induced isomorphism.
Consider a lift~$\Ga^*$ of~$\Ga$ into~$\Gm$.
Let $\levm$ be the level function introduced in section~\ref{subsec-level}.

If $\hsi_{\Ga^*}:\piorb(P)\to\z/m\z$ is a function such that the following diagram commutes
$$
  \begin{CD}
   \Ga         @>{\cong}>> \Ga^*         \\
   @V{\Phi}VV  @VV{\levm|_{\Ga^*}}V      \\
   \piorb(P) @>{\hsi_{\Ga^*}}>> \z/m\z,  \\
  \end{CD}
$$
then the function $\arf_{\Ga^*}=\hsi_{\Ga^*}|_{\piorbO(P)}$ is an $m$-Arf function,
the {\it $m$-Arf function associated to the lift $\Ga^*$}.
Every $m$-Arf function is obtained as $\arf_{\Ga^*}$ for some lift $\Ga^*$ of~$\Ga$.
% As for the function~$\levm$, all equations involving~$\hsi_{\Ga^*}$ are to be understood as equations in~$\z/m\z$.

The composite mapping $e_{\Ga^*}\mapsto\Ga^*\mapsto\arf_{\Ga^*}$
establishes a 1-1-cor\-res\-pon\-dence between $m$-spin bundles on $P=\hyp/\Ga$, lifts of the Fuchsian group~$\Ga$
and $m$-Arf functions on~$P$.

\subsection{Klein Surfaces and Real Fuchsian Groups}

\begin{mydef}
A {\it Klein surface} (or a {\it non-singular real algebraic curve\/})
is a topological surface with a maximal atlas whose transition maps are {\it dianalytic},
i.e.\ either holomorphic or anti-holomorphic.
A {\it homomorphism\/} between Klein surfaces is a continuous mapping which is dianalytic in local charts.
%Any Klein surface has a complex cover which is a compact Riemann surface with an anti-holomorphic involution.
\end{mydef}

\myskip
For more information on Klein surfaces, see~\cite{AllingGreenleaf:1971,N1990a}.

\myskip
Let us consider pairs~$(P,\tau)$,
where $P$ is a compact Riemann surface and $\tau:P\to P$ is an anti-holomorphic involution on~$P$.
For each such pair~$(P,\tau)$ the quotient $P/\<\tau\>$ is a Klein surface.
Each isomorphism class of Klein surfaces contains a surface of the form $P/\<\tau\>$.
Moreover, two such quotients $P_1/\<\tau_1\>$ and $P_2/\<\tau_2\>$ are isomorphic as Klein surfaces
if and only if there exists a biholomorphic map~$\psi:P_1\to P_2$ such that $\psi\circ\tau_1=\tau_2\circ\psi$,
in which case we say that the pairs $(P_1,\tau_1)$ and $(P_2,\tau_2)$ are {\it isomorphic}.
Hence from now on we will consider pairs $(P,\tau)$ up to isomorphism instead of Klein surfaces.

\myskip
The category of such pairs $(P,\tau)$ is isomorphic to the category of real algebraic curves
(see~\cite{AllingGreenleaf:1971}),
where fixed points of~$\tau$ (i.e.\ boundary points of the corresponding Klein surface)
correspond to real points of the real algebraic curve.

\myskip
For example a non-singular plane real algebraic curve given by an equation $F(x,y)=0$ 
is the set of real points of such a pair~$(P,\tau)$,
where $P$ is the normalisation and compactification of the surface $\{(x,y)\in\c^2\st F(x,y)=0\}$
and $\tau$ is given by the complex conjugation, $\tau(x,y)=(\bar x,\bar y)$.

\myskip
The set of fixed points of the involution~$\tau$ is called the {\it set of real points\/} of~$(P,\tau)$
and denoted by~$P^{\tau}$.
We say that $(P,\tau)$ is {\it separating\/} or {\it of type~I\/} if the set~$P\backslash P^{\tau}$ is not connected,
otherwise we say that it is {\it non-separating\/} or {\it of type~II\/}.

\myskip
A {\it non-Euclidean crystallographic group\/} or {\it NEC group\/} is a discrete subgroup of~$\Aut(\hyp)$.
The classification of NEC groups was first considered in~\cite{Wilkie:1966}, \cite{Macbeath:1967}.
For more information on NEC groups see~\cite{BEGG} and references therein.
All Klein surfaces can be constructed from real Fuchsian groups, a special kind of NEC groups.

\begin{mydef}
A {\it real Fuchsian group\/} is a NEC group~$\hGa$
such that the intersection $\hGa^+=\hGa\cap\Aut_+(\hyp)$ is a Fuchsian group consisting of hyperbolic automorphisms,
$\hGa\ne\hGa^+$ and the quotient $P=\hyp/\hGa^+$ is a compact surface.
\end{mydef}

%real Fuchsian groups->Klein surfaces
Let $\hGa$ be a real Fuchsian group.
Let $\hGa^{\pm}=\hGa\cap\Aut_{\pm}(\hyp)$, $P_{\hGa}=\hyp/\hGa^+$ and let $\Phi:\hyp\to P_{\hGa}$ be the natural projection.
Then for any automorphism~$g\in\hGa^-$,
the map~$\tau_{\hGa}=\Phi\circ g\circ\Phi^{-1}$ is an anti-holomorphic involution of~$P_{\hGa}$.
Thus a real Fuchsian group~$\hGa$ defines the Klein surface $[\hGa]=(P_{\hGa},\tau_{\hGa})$.
%Klein surfaces->real Fuchsian groups
It is not hard to see that any Klein surface is obtained this way (see~\cite{Nbook,N1975,N1978moduli}).

\begin{prop}
\label{prop-induced-inv}
Let $\hGa$ be a real Fuchsian group and $[\hGa]=(P_{\hGa},\tau_{\hGa})$ the corresponding Klein surface as defined above.
The anti-holomorphic involution $\tau=\tau_{\hGa}$ on $P_{\hGa}$
induces an involution $\tau=(\tau_{\hGa})_*$ on $\piorb(P_{\hGa})\cong\hGa^+$.
The induced involution satisfies
$$\tau(c)=fcf^{-1}$$
for every $c\in\hGa^+$ and $f\in\hGa^-$.
\end{prop}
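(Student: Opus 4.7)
My plan is to establish the proposition in three steps: first show that $\tau_{\hGa}$ is a well-defined anti-holomorphic involution of $P_{\hGa}$, then compute its action on $\piorb$ via covering-space theory, and finally identify the result as conjugation by $f\in\hGa^-$.

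For well-definedness, since $\hGa^+$ has index~$2$ in $\hGa$, the set $\hGa^-$ is a single $\hGa^+$-coset, so any two choices of $g\in\hGa^-$ differ by an element of $\hGa^+$ acting trivially on the quotient $P_{\hGa}=\hyp/\hGa^+$; hence $\Phi\circ g\circ\Phi^{-1}$ does not depend on $g$. Moreover $g^2\in\hGa^+$ acts trivially on $P_{\hGa}$, giving $\tau_{\hGa}^2=\id$, and $g$ is anti-holomorphic, hence so is $\tau_{\hGa}$.

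For the induced action, I would fix a lift $q\in\Phi^{-1}(p)$ of the basepoint and use the standard isomorphism $\hGa^+\to\piorb(P_{\hGa},p)$ that sends $c\in\hGa^+$ to the class of $\Phi\circ\gamma_c$, where $\gamma_c$ is any path in $\hyp$ from $q$ to $c(q)$. Using the intertwining relation $\tau_{\hGa}\circ\Phi=\Phi\circ f$ coming from the very definition of $\tau_{\hGa}$, one obtains $\tau_{\hGa}\circ(\Phi\circ\gamma_c)=\Phi\circ(f\circ\gamma_c)$. The path $f\circ\gamma_c$ runs from $f(q)$ to $f(c(q))=(fcf^{-1})(f(q))$ in $\hyp$, so its projection is a loop based at $\tau_{\hGa}(p)=\Phi(f(q))$ which represents $fcf^{-1}\in\hGa^+$ under the analogous identification at the basepoint $\tau_{\hGa}(p)$.

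Converting this into an endomorphism of $\piorb(P_{\hGa},p)$ requires choosing an arc from $p$ to $\tau_{\hGa}(p)$; different choices yield conjugate automorphisms of $\hGa^+$. At the level of conjugacy classes---which is what is needed in the rest of the paper, since $m$-Arf functions are conjugation-invariant---the induced map is unambiguously $c\mapsto fcf^{-1}$, and its involutive character follows from the fact that iterating it gives conjugation by $f^2\in\hGa^+$, an inner automorphism of $\hGa^+$. I expect the only real subtlety to be this basepoint bookkeeping; the rest is a direct unpacking of the covering-space dictionary.
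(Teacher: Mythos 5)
Your argument is correct and ends where the paper does, but it travels by a slightly different route. Both proofs hinge on the intertwining relation $\tau_{\hGa}\circ\Phi=\Phi\circ f$; the difference lies in which representative of $c$ gets transported by $f$. The paper pushes forward the \emph{axis} $\ell_c$ of the hyperbolic element $c$: since $f(\ell_c)$ is the axis of $fcf^{-1}$, the closed geodesic $\Phi(\ell_c)$ representing $c$ is carried by $\tau_{\hGa}$ to the closed geodesic representing $fcf^{-1}$. Working with axes means working directly with free homotopy classes, i.e.\ with conjugacy classes in $\hGa^+$, so the basepoint bookkeeping in your last paragraph never arises --- at the cost of leaving implicit that the formula $\tau(c)=fcf^{-1}$ is only meaningful up to conjugation (indeed it must be, since it ostensibly depends on the choice of $f\in\hGa^-$). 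Your covering-space computation with a path from $q$ to $c(q)$ makes that caveat explicit: the induced map on $\piorb(P_{\hGa},p)$ is well defined only up to inner automorphism, and $\tau_*^2$ is conjugation by $f^2\in\hGa^+$ rather than literally the identity. That is exactly the precision the rest of the paper needs, because $m$-Arf functions are conjugation-invariant by property~1 of Definition~\ref{def-m-arf}. Your preliminary verifications (independence of the choice of $g\in\hGa^-$, and $\tau_{\hGa}^2=\id$, both resting on $\hGa^+$ being normal of index two) are not spelled out in the paper but are correct and harmless additions.
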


\begin{proof}
Let $f\in\hGa^-$.
An element $c\in\hGa^+$ corresponds to the closed curve $[\Phi(\ell_c)]\in\piorb(P_{\hGa})$,
where $\ell_c$ is the axis of~$c$ and $\Phi:\hyp\to P_{\hGa}$ is the natural projection.
The image of~$c$ under the induced involution~$\tau$ corresponds to the closed curve
$$
  [\tau_{\hGa}(\Phi(\ell_c))]
  =[(\Phi\circ f\circ\Phi^{-1})(\Phi(\ell_c))]
  =[\Phi(f(\ell_c))].
$$
It is easy to see geometrically that $f(\ell_c)$ is the axis of~$fcf^{-1}$,
hence $\tau(c)=fcf^{-1}$.
\end{proof}

\subsection{From Lifts of Real Fuchsian Groups to Higher Spin Klein Surfaces}

\begin{mydef}
An {\it $m$-spin bundle on a Klein surface\/}~$(P,\tau)$ is a pair~$(e,\be)$,
where $e:L\to P$ is an $m$-spin bundle on~$P$ and $\be:L\to L$ is an anti-holomorphic involution on~$L$
such that $e\circ\be=\tau\circ e$, i.e.\ the following diagram commutes:
% $\be(z,x)=(\tau(z),*)$.
$$
  \begin{CD}
   L@>{e}>>P\\
   @V{\be}VV@VV{\tau}V\\
   L@>{e}>>P\\
  \end{CD}
$$
%$$
%  \begin{CD}
%   L@>{\be}>>L\\
%   @V{e}VV@VV{e}V\\
%   P@>{\tau}>>P\\
%  \end{CD}
%$$
\end{mydef}

\begin{mydef}
\label{def-isomorphic-m-spin-bundles}
Two $m$-spin bundles $(e_1:L_1\to P_1,\be_1)$ and $(e_2:L_2\to P_2,\be_2)$
on Klein surfaces $(P_1,\tau_1)$ and $(P_2,\tau_2)$ are {\it isomorphic\/}
if there exist biholomorphic maps $\varphi_L:L_1\to L_2$ and $\varphi_P:P_1\to P_2$
such that $e_2\circ\varphi_L=\varphi_P\circ e_1$, $\be_2\circ\varphi_L=\varphi_L\circ\be_1$
and $\tau_2\circ\varphi_P=\varphi_P\circ\tau_1$. i.e.\ the obvious diagrams commute:
$$
  \begin{CD}
   L_1@>{\be_1}>>L_1@>{e_1}>>P_1@>{\tau_1}>>P_1\\
   @V{\varphi_L}VV@V{\varphi_L}VV@VV{\varphi_P}V@VV{\varphi_P}V\\
   L_2@>{\be_2}>>L_2@>{e_2}>>P_2@>{\tau_2}>>P_2\\
  \end{CD}
$$
%$$
%  \begin{CD}
%   L_1@>{\varphi_L}>>L_2\\
%   @V{\be_1}VV@VV{\be_2}V\\
%   L_1@>{\varphi_L}>>L_2\\
%   @V{e_1}VV@VV{e_2}V\\
%   P_1@>{\varphi_P}>>P_2\\
%   @V{\tau_1}VV@VV{\tau_2}V\\
%   P_1@>{\varphi_P}>>P_2\\
%  \end{CD}
%$$
\end{mydef}

\begin{mydef}
A {\it lift of a real Fuchsian group}~$\hGa$
%(or a NEC group)
into $\Gm$ is a subgroup~$\hGa^*$ of~$\Gm$
such that the projection $\pi|_{\hGa^*}:\hGa^*\to\hGa$ is an isomorphism.
\end{mydef}

\begin{prop}
To any lift of a real Fuchsian group into the $m$-fold cover~$\Gm$ of~$\Aut(\hyp)$
we can associate an $m$-spin bundle on the corresponding Klein surface.
\end{prop}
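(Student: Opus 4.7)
The plan is to split $\hGa^*$ along the decomposition $\Gm=\Pm\cup\Nm$, use the positive half $\hGa^{*+}:=\hGa^*\cap\Pm$ to manufacture the underlying $m$-spin line bundle, and then use any element of $\hGa^{*-}:=\hGa^*\cap\Nm$ to produce the required anti-holomorphic involution on its total space.

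First I would observe that since $\pi|_{\hGa^*}:\hGa^*\to\hGa$ is an isomorphism, its restriction to $\hGa^{*+}$ is an isomorphism onto $\hGa^+$, so $\hGa^{*+}$ is a lift of the Fuchsian group $\hGa^+$ into $\Pm$ in the sense of Theorem~\ref{thm-corresp-lifts-bundles}. That theorem supplies an $m$-spin bundle $e:L\to P_{\hGa}$, concretely realized as $L=(\hyp\times\c)/\hGa^{*+}$ with $(g,\de_g)\cdot(z,x)=(g(z),\de_g(z)\,x)$. I would then extend this action of $\hGa^{*+}$ to an action of all of $\Gm$ on $\hyp\times\c$ by declaring that $(g,\de)\in\Nm$ acts as $(z,x)\mapsto(g(z),\de(z)\,\bar x)$. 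A direct check against the product rule in Definition~\ref{def-Gm}, case-split by whether the left factor lies in $\Pm$ or $\Nm$, shows this is really a group action: the conjugate $\bar{\de}_1$ that appears in the product rule when the left factor is in $\Nm$ matches precisely the extra complex conjugation performed on $x$ by an anti-holomorphic action.

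Now fix any $F\in\hGa^{*-}$, set $f=\pi(F)\in\hGa^-$, and recall that $\tau_{\hGa}\circ\Phi=\Phi\circ f$ by construction of $[\hGa]$. Define $\tilde\be:\hyp\times\c\to\hyp\times\c$ by $\tilde\be(z,x)=F\cdot(z,x)$. Because $\hGa^{*+}$ has index two in $\hGa^*$ it is normal, hence $F\hGa^{*+}F^{-1}=\hGa^{*+}$; so $\tilde\be$ is equivariant for the $\hGa^{*+}$-actions (via the automorphism $A\mapsto FAF^{-1}$) and descends to a well-defined map $\be:L\to L$. The three required properties then follow essentially from the definitions: anti-holomorphicity from the explicit formula together with $\de\in\overline{\Hol}(\hyp,\c^*)$; the commutation $e\circ\be=\tau_{\hGa}\circ e$ from $\Phi\circ f=\tau_{\hGa}\circ\Phi$; and involutivity $\be^2=\id_L$ from $\tilde\be^2=F^2\in\hGa^*\cap\Pm=\hGa^{*+}$, which descends to the identity on $L=(\hyp\times\c)/\hGa^{*+}$.

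The only place where care is required is the extension step: producing a genuine left group action on $\hyp\times\c$ that is anti-holomorphic on $\Nm$, and then checking that $\tilde\be$ descends in spite of that anti-holomorphic twist. Both reduce to short mechanical computations using the product rule of Definition~\ref{def-Gm} and the normality of $\hGa^{*+}$ in $\hGa^*$, so I would expect no genuine obstacle beyond bookkeeping.
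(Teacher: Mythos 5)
Your proposal is correct and follows essentially the same route as the paper: build $L=(\hyp\times\c)/(\hGa^*\cap\Pm)$ via Theorem~\ref{thm-corresp-lifts-bundles}, let an element of $\hGa^*\cap\Nm$ act by $(z,x)\mapsto(g(z),\de_g(z)\bar x)$, and deduce involutivity from $F^2\in\hGa^*\cap\Pm$. Your packaging of the descent step as ``extend to a left action of all of $\Gm$ and use normality of the index-two subgroup'' is just a tidier organization of the computations the paper labels ``computation shows,'' and it also yields for free the independence of the choice of $F$ that the paper checks separately.
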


\begin{proof}
Consider a lift~$\hGa^*$ of a real Fuchsian group~$\hGa$ into the $m$-fold cover $\Gm$ of~$\Aut(\hyp)$.
%We can associate an $m$-spin bundle~$e_{\hGa^*}$ on the Klein surface~$[\hGa]$.
%We define~$e_{\hGa^*}$ as $(e_{\Ga^*},\be_{\hGa^*})$, where
The corresponding Fuchsian group is given by $\Ga=\hGa^+=\hGa\cap\Aut_+(\hyp)$,
while $\Ga^*=\hGa^*\cap\Pm$ is the corresponding lift of~$\Ga$ into~$\Pm$.
Let $e_{\Ga_*}:L_{\Ga^*}\to P$ be the corresponding $m$-spin bundle as in Theorem~\ref{thm-corresp-lifts-bundles}
with
$$P=\hyp/\Ga\quad\text{and}\quad L_{\Ga^*}=(\hyp\times\c)/\Ga^*.$$
For any~$(g,\de_g)\in\hGa^*\cap\Nm$, consider the mapping $(z,x)\mapsto(g(z),\de_g(z)\cdot\bar x)$.

If $(z',x')$ and $(z,x)$ correspond to the same point in~$L_{\Ga^*}=(\hyp\times\c)/\Ga^*$, then computation
shows that $(g(z'),\de_g(z')\cdot\bar x')$ and $(g(z),\de_g(z)\cdot\bar x)$ correspond to the same point in~$L_{\Ga^*}$.
Thus the mapping $(z,x)\mapsto(g(z),\de_g(z)\cdot\bar x)$ induces a map~$\be_{\hGa^*}:L_{\Ga^*}\to L_{\Ga^*}$.

It is not hard to check that if we choose two different elements $(g_1,\de_{g_1})$ and $(g_2,\de_{g_2})$ in $\hGa^*\cap\Nm$,
then $(g_1(z),\de_{g_1}(z)\cdot\bar x)$ and $(g_2(z),\de_{g_2}(z)\cdot\bar x)$
correspond to the same point in~$L_{\Ga^*}$.
Thus the map $\be_{\hGa^*}$ does not depend on the choice of the element~$g\in\hGa^*\cap\Nm$.

%{\bf Proof:}
%\begin{align*}
%  &(g_2,\de_{g_2})\cdot(g_1^{-1},\de_{g_1}^{-1})\\
%  &=(g_2\circ g_1^{-1},(\de_{g_2}\circ g_1^{-1})\cdot\overline{\de_{g_1^{-1}}}),\\
%  &(g_2\circ g_1^{-1},\de_{g_2\circ g_1^{-1}})\cdot(g_1(z),\de_{g_1}(z)\cdot\bar x)\\
%  &=(
%     (g_2\circ g_1^{-1})(g_1(z)),
%     (\de_{g_2}\circ g_1^{-1})(g_1(z))\cdot\overline{\de_{g_1^{-1}}}(g_1(z))\cdot\de_{g_1}(z)\cdot\bar x).
%    ).
%\end{align*}
%Note that $(e_G,1)=(g_1,\de_{g_1})\cdot({g_1}^{-1},\de_{{g_1}^{-1}})$
%implies $\overline{\de_{{g_1}^{-1}}}(g_1(z))\cdot\de_{g_1}(z)=1$,
%hence
%$$(g_2\circ g_1^{-1},\de_{g_2\circ g_1^{-1}})\cdot(g_1(z),\de_{g_1}(z)\cdot\bar x)=(g_2(z),\de_{g_2}(z)\cdot\bar x),$$
%i.e.\ $(g_1(z),\de_{g_1}(z)\cdot\bar x)$ and $(g_2(z),\de_{g_2}(z)\cdot\bar x)$ correspond to the same point in~$L_{\Ga^*}$.

If we apply $\be_{\hGa^*}$ twice we get
\begin{align*}
  (z,x)
  &\mapsto(g(z),\de_g(z)\cdot\bar x)\\
  &\mapsto(g(g(z)),\de_g(g(z))\cdot\overline{\de_g(z)\cdot\bar x})\\
  &=((g\circ g)(z),((\de_g\circ g)\cdot\bar\de_g)(z)\cdot x)\\
  &=((g\circ g)(z),\de_{g\circ g}(z)\cdot x)\\
  &=(g\circ g)\cdot(z,x).
\end{align*}
We have $g\circ g\in\hGa^*$ since~$g\in\hGa^*$ and we have $g\circ g\in\Pm$ for any~$g\in\Gm$,
hence $g\circ g\in\hGa^*\cap\Pm=\Ga^*$.
Thus $(z,x)$ and $(g\circ g)\cdot(z,x)$ are equal modulo the action of~$\Ga^*$.
We have therefore shown that $\be_{\hGa^*}$ is indeed an involution.
We can now associate with the lift~$\hGa^*$ of the real Fuchsian group~$\hGa$
the $m$-spin bundle $e_{\hGa^*}:=(e_{\Ga^*},\be_{\hGa^*})$.
\end{proof}

\begin{prop}
To any $m$-spin bundle on the Klein surface~$(P,\tau)$ 
we can associate a lift of a real Fuchsian group into the $m$-fold cover~$\Gm$ of~$\Aut(\hyp)$.
\end{prop}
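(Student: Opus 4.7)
The plan is to reverse the construction of the previous proposition: given the pair $(e,\be)$, I would extract a real Fuchsian group~$\hGa$ together with a lift $\hGa^*\subset\Gm$ whose associated $m$-spin bundle is isomorphic to $(e,\be)$. Uniformise $P=\hyp/\Ga$ by a torsion-free Fuchsian group of hyperbolic elements, so that Theorem~\ref{thm-corresp-lifts-bundles} yields a lift $\Ga^*\subset\Pm$ corresponding to~$e$. In the trivialisation $E=\hyp\times\c$ of the pull-back of $L$ to~$\hyp$, the action of~$\Ga^*$ reads $(\ga,\de_\ga)\cdot(z,t)=(\ga(z),\de_\ga(z)\cdot t)$ with $\de_\ga^m=\frac{d}{dz}\ga$.

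The core of the argument is the construction of an element $J\in\Nm$ that encodes~$\be$. Pulling back $\be$ along the universal cover produces an anti-holomorphic, fibrewise anti-linear involution $\tilde\be:E\to E$ covering some lift $j\in\Aut_-(\hyp)$ of~$\tau$; in the trivialisation it takes the form $(z,x)\mapsto(j(z),\de_j(z)\cdot\bar x)$ for an anti-holomorphic $\de_j:\hyp\to\c^*$. The compatibility between $\be$ and the $m$-spin structure $L^{\otimes m}\cong T^*P$ forces $\de_j^m=\frac{d}{d\bar z}j$, so that $J:=(j,\de_j)$ is a genuine element of~$\Nm$ projecting to~$j$.

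I would then set $\hGa:=\langle\Ga,j\rangle=\Ga\sqcup j\Ga$ and $\hGa^*:=\Ga^*\sqcup J\Ga^*\subset\Gm$. Checking that $\hGa^*$ is a subgroup reduces to two identities: $J\Ga^*J^{-1}\subset\Ga^*$ and $J^2\in\Ga^*$. Both translate, via the explicit action on $\hyp\times\c$, to properties of~$\tilde\be$: the first expresses that $\tilde\be$ normalises the $\Ga^*$-action, a consequence of $\be$ descending to an involution of~$L$; the second follows from $\be^2=\id_L$, which forces $\tilde\be^2$ to act as a specific element of~$\Ga^*$. Once these hold, the restriction of $\Gm\to\Aut(\hyp)$ to~$\hGa^*$ is an isomorphism onto~$\hGa$, making $\hGa^*$ a lift, and reapplying the construction of the previous proposition to~$\hGa^*$ recovers $(e,\be)$ up to the equivalence of Definition~\ref{def-isomorphic-m-spin-bundles}.

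The most delicate step, and the one I expect to be the main obstacle, is the verification of $J\Ga^*J^{-1}\subset\Ga^*$ at the level of the cocycles $\de_\ga$ and~$\de_j$. It amounts to checking that the $m$-th root of $\frac{d}{dz}\ga$ fixed by~$\Ga^*$ is compatible with the $m$-th root of $\frac{d}{d\bar z}j$ fixed by~$J$, which is precisely where the hypothesis that $\be$ is a bundle map for the given $m$-spin structure~$e$ (rather than for a twist of~$e$ by an element of the centre $Z(\Pm)\cong\z/m\z$) enters in an essential way; this rigidity is also what ultimately ensures that the correspondence sketched here is inverse to the one of the previous proposition.
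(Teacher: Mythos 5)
Your proposal follows essentially the same route as the paper's own proof: lift $\be$ to the trivialised pull-back $\hyp\times\c$, observe that it has the form $(z,x)\mapsto\bigl(g(z),a(z)\cdot\bar x\bigr)$ with $a^m=\frac{d}{d\bar z}g$ by passing to the $m$-th tensor power, obtain $J=(g,a)\in\Nm$, and verify $J\Ga^*J^{-1}=\Ga^*$ and $J^2\in\Ga^*$ from the facts that $\be$ descends to $L$ and is an involution. The only cosmetic difference is that the paper derives the fibrewise anti-linearity (and the vanishing of the constant term) from bi-anti-holomorphicity of the fibre map together with preservation of the zero section, whereas you assume it as part of $\be$ being a bundle map; otherwise the argument is the same.
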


\begin{proof}
Any $m$-spin bundle on $(P,\tau)$ is obtained as $e_{\Ga^*}$ for some lift $\Ga^*$ of~$\Ga$ into~$\Gm$.
Let $(e:L\to P,\be:L\to L)$ be an $m$-spin bundle on $(P,\tau)$.
We have $e\circ\be=\tau\circ e$.
Consider a lift $\tilde\be$ of $\be:L\to L$ to the universal cover $\tilde L=\hyp\times\c$ of~$L$.
Let $\tilde e$ be the projection $\hyp\times\c\to P$. 
The map~$\tilde\be$ is bi-anti-holomorphic, invariant under~$\Ga^*$
and with the property $\tilde e\circ\tilde\be=\tau\circ\tilde e$,
hence $\tilde\be$ is of the form
$$\tilde\be(z,x)=(g(z),f(z,x)),$$
where $g$ is some element of $\hGa^-$ and $f$ is some anti-holomorphic map.
For a fixed~$z$ the map $x\mapsto f(z,x)$ is a bi-anti-holomorphic map $\c\to\c$,
hence $f(z,x)=a(z)\cdot\bar x+b(z)$,
where $a:\hyp\to\c^*$ and $b:\hyp\to\c$ are holomorphic functions.
Since $\be$ is a bundle map, it preserves the zero section of~$L$, hence $b(z)=0$ for all~$z$.
Thus $\tilde\be$ is of the form
$$\tilde\be(z,x)=(g(z),a(z)\cdot\bar x),$$
where $a:\hyp\to\c^*$ is a holomorphic function.
Considering the $m$-fold tensor products, we obtain an anti-holomorphic involution given by
$$\tilde\be^{\otimes m}(z,x)=(g(z),a^m(z)\cdot\bar x)$$
on the cotangent bundle of~$P$,
hence 
$$a^m=\frac{d}{d\bar z}g.$$
Therefore $\tilde g=(g,\de_g)$ with $\de_g=a$ defines a lift of the element~$g$ into~$\Nm$.
The fact that the map $\tilde\be$ is invariant under the action of~$\Ga^*$ on $\hyp\times\c$
implies that the element~$\tilde g=(g,\de_g)$ normalises the lift~$\Ga^*$,
i.e.\ $\tilde g\cdot\Ga^*\cdot\tilde g^{-1}=\Ga^*$.
%{\bf Proof:}
%For an element $\tilde h=(h,\de_h)\in\Ga^*$,
%the elements $(z,x)$ and $\tilde h\cdot(z,x)$ represent the same elements in~$L$,
%hence the elements
%$$\tilde\be(z,x)=\tilde g\cdot(z,x)$$
%and
%$$\tilde\be(\tilde h\cdot(z,x))=\tilde g\cdot\tilde h\cdot(z,x)$$
%must represent the same elements in~$L$.
%Note that if $\widetilde{ghg^{-1}}$ is the lift of~$ghg^{-1}$ into~$\Ga^*$ then the element
%$$\widetilde{ghg^{-1}}\cdot\tilde g\cdot(z,x)=(g(h(z)),\dots)$$
%represents the same element in~$L$ as $\tilde g\cdot(z,x)$
%Thus the elements
%$$
%  \widetilde{ghg^{-1}}\cdot\tilde g\cdot(z,x)
%  \quad\text{and}\quad
%  \tilde g\cdot\tilde h\cdot(z,x)
%$$
%are equivalent under the action of~$\Ga^*$ on~$\hyp\times\c$.
%The action of~$\Ga^*$ on~$\hyp$ is free, hence these elements can only be equivalent if
%$$\widetilde{ghg^{-1}}\cdot\tilde g=\tilde g\cdot\tilde h.$$
%Thus
%$$\tilde g\cdot\tilde h\cdot \tilde g^{-1}=\widetilde{ghg^{-1}}\in\Ga^*.$$
The fact that $\be$ is an involution implies that the element~$\tilde g=(g,\de_g)$ is of order two.
%{\bf Proof:}
%$\be$ is an involution, hence the element
%$$\tilde\be(\tilde\be(z,x))=\tilde g^2\cdot(z,x)$$
%represents the same element in~$L$ as $(z,x)$.
%The action of~$\Ga^*$ on~$\hyp$ is free,
%hence these elements can only be equivalent if $\tilde g^2=\tilde e$.
The fact that $\tilde g\cdot\Ga^*\cdot\tilde g^{-1}=\Ga^*$ and $\tilde g^2=\tilde e$
implies that the subgroup of $\Gm$ generated by $\Ga^*$ and $\tilde g$ is a lift of $\hGa$ into $\Gm$.
\end{proof}

\subsection{Lifts of Real Fuchsian Groups and Real Arf Functions}

A lift~$\hGa^*$ of a real Fuchsian group~$\hGa$ into the $m$-fold cover $\Gm$ of~$\Aut(\hyp)$
induces a lift $\Ga^*=\hGa^*\cap\Pm$ of the Fuchsian group $\Ga=\hGa\cap G^+$ into~$\Pm$,
whence an $m$-Arf function~$\arf_{\hGa^*}$ on~$\hyp/\Ga$.
Let us study the special properties of such $m$-Arf functions.

\begin{lem}
\label{lem-compatible-arf-function}
Let $\hGa$ be a real Fuchsian group,
$\Ga=\hGa^+=\hGa\cap G^+$ the corresponding Fuchsian group,
$[\hGa]=(P=\hyp/\Ga,\tau)$ the corresponding Klein surface
and $\hGa^*$ a lift of~$\hGa$.
Then the induced $m$-Arf function~$\arf=\arf_{\Ga^*}$ on~$P$ has the following property:
$\arf(\tau c)=-\arf(c)$ for any~$c\in\piorbO(P)$.
% i.e.\ $\arf$ is a real $m$-Arf function on the Klein surface $[\tGa]$.
\end{lem}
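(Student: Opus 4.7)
The plan is to translate the identity $\arf(\tau c)=-\arf(c)$ into a statement about conjugation of a hyperbolic element by an element of $\Nm$ inside the lift $\hGa^*$, and then invoke Proposition~\ref{Nm-conj}.

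First I would unpack the definition of $\arf=\arf_{\Ga^*}$. Given $c\in\piorbO(P)$, fix the unique $\ga\in\Ga=\hGa^+$ corresponding to $c$ under the isomorphism $\Phi:\Ga\to\piorb(P)$; the element $\ga$ is hyperbolic because $\hGa^+$ consists of hyperbolic elements by the definition of a real Fuchsian group. Let $C\in\Ga^*=\hGa^*\cap\Pm$ be the unique lift of $\ga$. By the construction of $\arf_{\Ga^*}$ recalled in Theorem~\ref{thm-corresp-lifts-arf}, one has $\arf(c)=\levm(C)$.

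Next I would compute $\arf(\tau c)$ in the same way. Pick any $f\in\hGa^-$ (which exists because $\hGa\ne\hGa^+$). By Proposition~\ref{prop-induced-inv}, the induced involution on $\piorb(P)\cong\Ga$ satisfies $\tau(c)=f\ga f^{-1}$. Let $F\in\hGa^*\cap\Nm$ be the unique lift of $f$, which exists since $\pi\st\hGa^*\to\hGa$ is an isomorphism. Because $\hGa^*$ is a group, $FCF^{-1}\in\hGa^*$; its projection $f\ga f^{-1}$ lies in $\hGa^+$, so $FCF^{-1}\in\hGa^*\cap\Pm=\Ga^*$ and is therefore the unique lift of $f\ga f^{-1}$. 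Consequently $\arf(\tau c)=\levm(FCF^{-1})$.

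Finally, since $C$ is hyperbolic and $F\in\Nm$, Proposition~\ref{Nm-conj} yields $\levm(FCF^{-1})=-\levm(C)$, which gives $\arf(\tau c)=-\arf(c)$ as required. There is no real obstacle here: the work has already been done in Propositions~\ref{prop-induced-inv} and~\ref{Nm-conj}, and the only point requiring a moment's care is the verification that $FCF^{-1}$ lands back in $\Ga^*$ (so that it is indeed the lift used to evaluate $\arf$ on $\tau c$), which is immediate from the fact that $\hGa^*$ is a subgroup and $FCF^{-1}$ projects into $G^+$.
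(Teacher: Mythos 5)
Your proposal is correct and follows the same route as the paper: identify the induced involution on $\piorb(P)\cong\Ga\cong\Ga^*$ with conjugation by an element of $\hGa^*\cap\Nm$ (via Proposition~\ref{prop-induced-inv}) and apply Proposition~\ref{Nm-conj} to conclude that the level, and hence the Arf function, changes sign. Your version merely spells out the details (uniqueness of the lifts $C$ and $F$, and the verification that $FCF^{-1}$ lands in $\Ga^*$) that the paper leaves implicit.
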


\begin{proof}
The anti-holomorphic involution on~$P$ is given by $\tau=\Phi\circ f\circ\Phi^{-1}$,
where $f\in\hGa^-=\hGa\cap G^-$ and $\Phi$ is the natural projection $\hyp\to P$.
The induced involution on $\piorb(P)\cong\Ga\cong\Ga^*$
is given by conjugation by an element of~$(\hGa^*)^-=\hGa^*\cap\Nm$,
which according to Proposition~\ref{Nm-conj} changes the sign of~$\levm$,
hence $\arf(\tau c)=-\arf(c)$ for all~$c\in\piorbO(P)$.
\end{proof}

\begin{mydef}
We call an $m$-Arf function on a Klein surface~$(P,\tau)$ {\it compatible\/} (with the involution~$\tau$)
if $\arf(\tau c)=-\arf(c)$ for any~$c\in\piorbO(P)$.
\end{mydef}

To understand the structure of a Klein surface $(P,\tau)$,
we look at those closed curves which are invariant under the involution~$\tau$.
There are two kinds of invariant curves, depending on whether the restriction of $\tau$ to the invariant curve
is the identity or a "half-turn".

\begin{mydef}
Let $(P,\tau)$ be a Klein surface.
The set~$P^{\tau}$ of fixed points of the involution~$\tau$
decomposes into pairwise disjoint simple closed smooth curves, called {\it ovals}.
\end{mydef}

\begin{mydef}
A {\it twist\/} (or {\it twisted oval\/}) is a simple closed curve in~$P$ which is invariant under the involution~$\tau$
but does not contain any fixed points of~$\tau$.
\end{mydef}

\begin{rem}
A twisted oval is not an oval,
however the corresponding element of $H_1(P)$ is a fixed point of the induced involution
and the corresponding element of $\piorb(P)$ is preserved up to conjugation by the induced involution.
\end{rem}

\begin{lem}
\label{lem-compatible-arf-zero-or-half-m}
Let $\arf$ be a compatible $m$-Arf function on a Klein surface~$(P,\tau)$.
If $m$ is odd, then $\arf$ vanishes on all ovals and all twists.
If $m$ is even, then $\arf(c)$ is either equal to~$0$ or to~$m/2$ for any oval and any twist~$c$.
\end{lem}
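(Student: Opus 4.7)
The plan is to exploit the compatibility relation $\arf(\tau c)=-\arf(c)$ in conjunction with the conjugation-invariance built into Property~\ref{arf-prop-conj} of Definition~\ref{def-m-arf}. If one can show that for any oval or twist $c$ the image $\tau c$ is freely homotopic to $c$ (equivalently, conjugate to $c$ in $\piorb(P)$), then Property~\ref{arf-prop-conj} yields $\arf(\tau c)=\arf(c)$, while compatibility yields $\arf(\tau c)=-\arf(c)$. Combining these gives $2\arf(c)=0$ in $\z/m\z$: for odd~$m$ the map $x\mapsto 2x$ on $\z/m\z$ is a bijection, forcing $\arf(c)=0$; for even~$m$ the solutions are precisely $\arf(c)\in\{0,m/2\}$, which is exactly the claim of the lemma.

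The content of the proof therefore reduces to establishing that $\tau c$ and $c$ are freely homotopic when $c$ is an oval or a twist. For an oval this is immediate from the definition: $\tau$ fixes $c$ pointwise, so $\tau\circ c=c$ as parametrised oriented loops. For a twist $c$, the restriction $\tau|_c$ is a smooth fixed-point-free involution of $c\cong S^1$, and any such involution is, after an orientation-preserving reparametrisation of $S^1$, the antipodal rotation $e^{it}\mapsto e^{i(t+\pi)}$, which preserves the orientation of $S^1$. Hence $\tau\circ c$ is merely a time-shift reparametrisation of the oriented loop $c$, so the two are freely homotopic.

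One minor subtlety worth flagging is that the induced action of $\tau$ on $\piorb(P,p)$ is only defined up to inner automorphism, the ambiguity arising from the choice of a path from $p$ to $\tau(p)$; however Property~\ref{arf-prop-conj} makes $\arf(\tau c)$ well-defined independently of that choice, so the argument goes through unchanged. I do not anticipate any substantial obstacle: the argument is a short formal deduction built on the elementary topological fact that a fixed-point-free smooth involution of $S^1$ is orientation-preserving.
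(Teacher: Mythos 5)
Your proof is correct and follows essentially the same route as the paper: the paper also deduces $\arf(\tau c)=\arf(c)$ from the invariance of the curve, combines this with compatibility to get $2\arf(c)\equiv0\pmod m$, and then splits into the odd and even cases. The only difference is that you spell out why $\tau c$ is freely homotopic to $c$ for a twist (a fixed-point-free involution of $S^1$ is orientation-preserving), a point the paper simply asserts.
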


\begin{proof}
For any invariant curve $c\in\piorb(P)\cong\Ga$, either an oval or a twist,
we have $\tau c=c$ and therefore $\arf(\tau c)=\arf(c)$.
On the other hand $\arf$ is compatible, hence $\arf(\tau c)=-\arf(c)$ for all~$c$.
Therefore $2\arf(c)=0$ modulo~$m$.
For odd~$m$ this implies $\arf(c)=0$, while for even~$m$ we can have either $\arf(c)=0$ or $\arf(c)=m/2$.
\end{proof}

\myskip
Not all compatible $m$-Arf functions correspond to lifts of real Fuchsian groups.
We will prove now that if an $m$-Arf function corresponds to a lift of a real Fuchsian group,
then stronger conditions on the twists than Lemma~\ref{lem-compatible-arf-zero-or-half-m} are satisfied.

\myskip
For a hyperbolic automorphism~$c\in\Aut_+(\hyp)$
let $\bar c$ be the reflection whose mirror coincides with the axis of~$c$,
let $\sqrt{c}$ be the hyperbolic automorphism such that~$(\sqrt{c})^2=c$
and let $\tilde c=\bar c\sqrt{c}$.
The discussion summarised in section~2.2 of~\cite{Nbook}
(compare with Theorem~\ref{thm-gen} for more details) implies

\begin{lem}
\label{lem-bar-tilde-c}
If $c\in\hGa$ is a hyperbolic element that corresponds to an oval on $P=\hyp/\Ga$,
then $\hGa$ contains the reflection $\bar c$.
If $c\in\hGa$ is a hyperbolic element that corresponds to a twist on $P=\hyp/\Ga$,
then $\hGa$ contains the element $\tilde c=\bar c\sqrt{c}$.
\end{lem}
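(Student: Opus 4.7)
The plan is to treat the oval and twist cases separately by lifting the invariant curve on the Klein surface back to the axis $\ell_c$ of $c$ in $\hyp$, and then determining precisely which orientation-reversing elements of $\hGa$ must preserve $\ell_c$. In both cases the projection $\Phi:\hyp\to P=\hyp/\Ga$ sends $\ell_c$ to a closed curve representing the conjugacy class of $c$, and by assumption this closed curve descends to the invariant curve (oval or twist) under the further quotient by $\tau$.

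For the oval case, an oval on $P/\<\tau\>$ lifts to a closed curve on $P$ consisting entirely of fixed points of $\tau$. Since this curve is $\Phi(\ell_c)$, every point $z\in\ell_c$ projects to a fixed point of $\tau$, hence there is some $h_z\in\hGa^-$ fixing $z$. An orientation-reversing isometry of $\hyp$ with a fixed point is necessarily a reflection, so each such $h_z$ is a reflection whose mirror passes through $z$. A discreteness argument (the orbit $\tau$-fixed points form a smooth curve locally) forces a single reflection $r\in\hGa^-$ to have its mirror containing an open arc of $\ell_c$; two geodesics sharing an arc coincide, so the mirror of $r$ is exactly $\ell_c$, and by definition $r=\bar c$.

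For the twist case, the same lifting gives an invariant closed curve $\Phi(\ell_c)\cong\ell_c/\<c\>$, on which $\tau$ now acts as a fixed-point-free involution, i.e.\ as a half-turn of the circle. Lifting this action, there must exist $g\in\hGa^-$ preserving $\ell_c$ setwise without fixed points, which forces $g$ to be a glide reflection along $\ell_c$. Writing any glide reflection along $\ell_c$ as $\bar c\cdot t$, where $t$ is a hyperbolic translation along $\ell_c$, the half-turn condition on $\ell_c/\<c\>$ means that $g$ translates along $\ell_c$ by exactly half of the translation length of $c$, modulo replacing $g$ by $gc^k$. After choosing the correct representative in the coset $g\<c\>$, one obtains $t=\sqrt{c}$ and hence $g=\bar c\sqrt{c}=\tilde c$.

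The main obstacle is the twist case, specifically pinning down the correct normalization. The element $g\in\hGa^-$ that descends to the half-turn on $\Phi(\ell_c)$ is only defined modulo composition with elements of $\Ga$, and one must verify that within the coset $g\Ga$ there is a unique element preserving $\ell_c$ and translating it by exactly half of $\ell(c)$; this requires knowing that no element of $\Ga\setminus\<c\>$ preserves $\ell_c$, which follows from $c$ representing a simple closed curve (so $\<c\>$ is the full $\hGa^+$-stabilizer of $\ell_c$). Once this normalization is in place, the identification $g=\tilde c$ is immediate from the classification of orientation-reversing isometries preserving a given geodesic.
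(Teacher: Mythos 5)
Your argument is correct, but it is genuinely different in character from what the paper does: the paper gives no direct proof of Lemma~\ref{lem-bar-tilde-c} at all, instead deducing it from the structure theory of real Fuchsian groups (the normal form for generating sets in Theorem~\ref{thm-gen}, citing section~2.2 of~\cite{Nbook}), where the presence of $\bar c_i$ and $\tilde c_j$ in a generating set is part of the Weichold-type classification. You instead give a self-contained hyperbolic-geometry proof: lift the invariant curve to the axis $\ell_c$, and classify the orientation-reversing elements of $\hGa$ that must stabilise it. In the oval case, every point of $\ell_c$ is fixed by some element of $\hGa^-$, such an element is a reflection, and countability of $\hGa$ (a mirror distinct from $\ell_c$ meets $\ell_c$ in at most one point) forces one mirror to coincide with $\ell_c$, giving $\bar c\in\hGa$. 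In the twist case, some $g\in\hGa^-$ preserves $\ell_c$ setwise and acts on it without fixed points (a fixed point would project to a fixed point of $\tau$ on the twist), hence is a glide reflection $\bar c\,t$; since $g^2\in\mathrm{Stab}_{\Ga}(\ell_c)=\<c\>$ (here you correctly use that $c$ is primitive because it is represented by a simple closed curve), $g$ translates by an odd multiple of half the translation length of $c$, and multiplying by a suitable power of $c$ yields $\tilde c=\bar c\sqrt c\in\hGa$. The payoff of your route is that the lemma is proved from first principles rather than imported; the payoff of the paper's route is brevity and consistency with the already-quoted classification. Two small points you should make explicit: the identification of the oval (respectively the invariant twist curve) with the closed geodesic $\Phi(\ell_c)$ uses that the fixed-point set of an anti-holomorphic (isometric) involution is totally geodesic, so ovals are closed geodesics; and in the oval case the cleanest form of your ``discreteness argument'' is the countability statement above rather than an appeal to local smoothness.
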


\begin{lem}
\label{lem-non-special-arf}
Let $\hGa$ be a real Fuchsian group,
$\Ga=\hGa^+=\hGa\cap G^+$ the corresponding Fuchsian group,
$[\hGa]=(P=\hyp/\Ga,\tau)$ the corresponding Klein surface and $\hGa^*$ a lift of~$\hGa$.
Then the induced $m$-Arf function~$\arf=\arf_{\Ga^*}$ on~$P$ vanishes on all twists.
\end{lem}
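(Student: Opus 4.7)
The plan is to use Lemma~\ref{lem-bar-tilde-c} to produce, for any twist, an auxiliary element in the lift whose square lies on level zero, and thereby pin down the level of the lift of the twist itself.

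Let $c\in\Ga$ be a hyperbolic element representing a twist on $P$. By Lemma~\ref{lem-bar-tilde-c}, the real Fuchsian group $\hGa$ contains the glide reflection $\tilde c=\bar c\sqrt{c}$, where $\bar c$ is the reflection in the axis of~$c$ and $\sqrt{c}$ is the hyperbolic element with $(\sqrt{c})^2=c$. Since $\bar c$ commutes with any hyperbolic element sharing its axis, one has $\tilde c^{\,2}=c$ in~$\hGa$. Let $\tilde C\in\hGa^*\cap\Nm$ and $C^*\in\Ga^*$ be the unique lifts of $\tilde c$ and $c$ respectively; because $\hGa^*\to\hGa$ is an isomorphism, $\tilde C^{\,2}=C^*$, so $\arf(c)=\levm(C^*)=\levm(\tilde C^{\,2})$.

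To compute this level I would first reduce to the normal form where the axis of $c$ is the imaginary axis; this is legitimate because conjugation by an element of~$\Pm$ preserves~$\levm$ by Proposition~\ref{prop-conj}, and a suitable lift of the conjugating element in $G^+$ carries $\hGa^*$ to a conjugate lift. In this normal form $c=\tau_{0,\infty}(\la)$, $\sqrt{c}=\tau_{0,\infty}(\sqrt\la)$ and $\bar c=j$, so $\tilde c=\tau_{0,\infty}(\sqrt\la)\cdot j$. Any lift of $\tilde c$ into $\Gm$ has the form
\[
  \tilde C=T_{0,\infty}(\sqrt\la)\cdot J\cdot U^s
\]
for some $s\in\z/m\z$, where $J$ is the pre-image of $j$ with $J^2=e_{\Gm}$ of Proposition~\ref{J-squared}.

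The key computation is now short. Using that $U$ is central in $\Pm$ (so it commutes with $T_{0,\infty}(\sqrt\la)$), the identity $U^sJ=JU^{-s}$ from Proposition~\ref{J-action}, and the relations $JT_{0,\infty}(\sqrt\la)J^{-1}=T_{0,\infty}(\sqrt\la)$ and $J^2=e_{\Gm}$, one obtains
\[
  \tilde C^{\,2}
  =T_{0,\infty}(\sqrt\la)\,J\,T_{0,\infty}(\sqrt\la)\,\bigl(U^sJU^s\bigr)
  =T_{0,\infty}(\sqrt\la)\,J\,T_{0,\infty}(\sqrt\la)\,J
  =T_{0,\infty}(\sqrt\la)^{2}\,J^{2}
  =T_{0,\infty}(\la),
\]
independent of~$s$. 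Since $T_{0,\infty}(\la)$ is a canonical hyperbolic lift coming from the one-parameter subgroup, $\levm(T_{0,\infty}(\la))=0$. Hence $\arf(c)=\levm(C^*)=0$ for every twist.

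The main obstacle is just keeping the bookkeeping for the central ambiguity and the $J$-conjugation consistent; everything else is a direct application of the structural identities established in Section~\ref{sec-Aut}. The reduction to axis $\{0,\infty\}$ is the step that makes those identities directly applicable, and the vanishing of the central correction $U^sJU^s=J$ is what makes the argument insensitive to the particular choice of lift $\tilde C\in\hGa^*$.
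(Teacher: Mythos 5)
Your proposal is correct and follows essentially the same route as the paper's proof: reduce to the normal form $c=\tau_{0,\infty}(\la)$, lift the glide reflection $\tilde c=\bar c\sqrt{c}$ provided by Lemma~\ref{lem-bar-tilde-c}, and show that its square in $\hGa^*$ is the level-zero element $T_{0,\infty}(\la)$ independently of the central ambiguity $U^s$. The only differences are cosmetic (multiplicative parameter $\sqrt\la$ versus the paper's $\la/2$, and your explicit justification of the ``without loss of generality'' conjugation step via Proposition~\ref{prop-conj}).
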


\begin{proof}
We need to show that the case $m$ even, $c$ a twist, $\arf(c)=m/2$ is not possible.
Let $c$ be a hyperbolic element in $\Ga\cong\piorb(P)$ which corresponds to a twist.
According to Lemma~\ref{lem-bar-tilde-c} the group~$\hGa$ contains the element~$\tilde c=\bar c\sqrt{c}$.
Let $C\in(\hGa^*)^+$ and $\tilde C\in(\hGa^*)^-$ be the lifts of~$c$ and $\tilde c$.
Without loss of generality we can assume that $c=\tau_{0,\infty}(\la)$,
so that $\tilde c=j\cdot\tau_{0,\infty}(\la/2)$.
Then the lift of~$\tilde c$ in~$\hGa^*$ is of the form $JT_{0,\infty}(\la/2)\cdot U^q$ for some integer~$q$.
Using identities from Proposition~\ref{J-action} we obtain that
\begin{align*}
  (J T_{0,\infty}(\la/2) U^q)^2
  &=(J T_{0,\infty}(\la/2) U^q)(J T_{0,\infty}(\la/2) U^q)\\
  &=(J T_{0,\infty}(\la/2) U^q)(U^{-q} J T_{0,\infty}(\la/2))\\
  &=J T_{0,\infty}(\la/2) J T_{0,\infty}(\la/2)
  =J J T_{0,\infty}(\la/2) T_{0,\infty}(\la/2)\\
  &=T_{0,\infty}(\la/2) T_{0,\infty}(\la/2)
  =T_{0,\infty}(\la).
\end{align*}
The element~$T_{0,\infty}(\la)$ is therefore in~$\hGa^*$ and is a pre-image of $(\tilde c)^2=c$,
hence $T_{0,\infty}(\la)$ is the lift of~$c$ in~$\hGa^*$.
By definition of an induced $m$-Arf function, we know that $\arf(c)$ is equal to the value of the level function~$\levm$
on the lift of~$c$ in~$\hGa^*$,
i.e.\ $\arf(c)=\levm(T_{0,\infty}(\la))$.
On the other hand, $\levm(T_{0,\infty}(\la))=0$, see section~\ref{subsec-level}.
Thus we conclude that $\arf(c)=0$.
\end{proof}

\begin{mydef}
\label{def-real-arf}
A {\it real $m$-Arf function} on a Klein surface~$(P,\tau)$ is an $m$-Arf function~$\arf$ on~$P$ such that
\begin{enumerate}[(i)]
\item
$\arf$ is compatible with~$\tau$, i.e.\ $\arf(\tau c)=-\arf(c)$ for any~$c\in\piorbO(P)$.
\item
$\arf$ vanishes on all twists.
\end{enumerate}
\end{mydef}

Lemmas~\ref{lem-compatible-arf-function} and~\ref{lem-non-special-arf} imply:

\begin{thm}
\label{thm-lifts-to-Arf-functions}
Let $\hGa^*$ be a lift of a real Fuchsian group~$\hGa$.
Then the induced $m$-Arf function~$\arf=\arf_{\hGa^*}$
is a real $m$-Arf function on the Klein surface $[\hGa]$.
% non-special real Arf function
\end{thm}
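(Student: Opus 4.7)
The plan is essentially a bookkeeping assembly, since the two nontrivial conditions in Definition~\ref{def-real-arf} have each been verified in a preceding lemma. First I would record the identification that turns $\arf_{\hGa^*}$ into the object handled by those lemmas: if $\hGa^*$ is a lift of $\hGa$ into $\Gm$, then the intersection $\Ga^* := \hGa^* \cap \Pm$ is a lift of the Fuchsian group $\Ga = \hGa^+$ into $\Pm$ (because the covering $\Gm \to G$ restricts to a covering $\Pm \to G^+$, and the decomposition $\hGa = \hGa^+ \cup \hGa^-$ pulls back to $\hGa^* = \Ga^* \cup (\hGa^* \cap \Nm)$). By construction of the Arf function associated to a lift in Theorem~\ref{thm-corresp-lifts-arf}, the function $\arf_{\hGa^*}$ on $\piorbO(P)$ coincides with $\arf_{\Ga^*}$, since $\piorbO(P) \subset \piorb(P) \cong \Ga$ is composed of orientation-preserving elements whose lifts automatically lie in $\Ga^*$.

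Once this identification is in place, I would verify the two conditions of Definition~\ref{def-real-arf} separately. For condition~(i), compatibility $\arf(\tau c) = -\arf(c)$ for every $c \in \piorbO(P)$ is exactly the conclusion of Lemma~\ref{lem-compatible-arf-function}. For condition~(ii), vanishing on all twists is exactly the content of Lemma~\ref{lem-non-special-arf}, which already rules out the only other possibility $\arf(c) = m/2$ that Lemma~\ref{lem-compatible-arf-zero-or-half-m} had left open.

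There is no genuine obstacle here; the theorem is packaged as the corollary of these two lemmas, so the proof consists of invoking them. The only step that deserves explicit mention is the observation at the start, so that the reader sees that applying Lemma~\ref{lem-compatible-arf-function} to $\arf_{\Ga^*}$ really does control $\arf_{\hGa^*}$ on all of $\piorbO(P)$, and likewise for Lemma~\ref{lem-non-special-arf}. Hence the write-up can be a single short paragraph that states the identification $\arf_{\hGa^*} = \arf_{\Ga^*}$ and then cites the two lemmas to establish conditions~(i) and~(ii).
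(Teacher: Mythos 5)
Your proposal is correct and matches the paper exactly: the paper also obtains this theorem as an immediate consequence of Lemma~\ref{lem-compatible-arf-function} (compatibility) and Lemma~\ref{lem-non-special-arf} (vanishing on twists), after noting that a lift $\hGa^*$ induces the lift $\Ga^*=\hGa^*\cap\Pm$ of $\Ga=\hGa^+$ and hence the $m$-Arf function $\arf_{\hGa^*}=\arf_{\Ga^*}$. Your explicit remark on that identification is a sensible, if minor, addition.
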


\begin{rem}
We slightly change terminology here.
In~\cite{Nbook, N1994} $2$-Arf functions satisfying~(i) were called real Arf functions,
while $2$-Arf functions satisfying~(i) and~(ii) were called non-special real Arf functions.
\end{rem}

\begin{rem}
One can show 
%(see Lemma~\ref{lem-non-special-arf-with-ovals})
that in the presence of ovals ($P^{\tau}\ne\emptyset$)
compatibility with~$\tau$ implies property~(ii).
However, if $P^{\tau}=\emptyset$ and $m$ is even,
there exist compatible Arf functions which assume the value $m/2$ on all twists.
We will not be interested in these Arf functions since they do not come from real Fuchsian groups.
\end{rem}

%\begin{lem}
%\label{lem-non-special-arf-with-ovals}
%Let $(P,\tau)$ be a Klein surface with $P^{\tau}\ne\emptyset$.
%Let $\arf$ be a compatible $m$-Arf function on~$(P,\tau)$.
%Then $\arf$ vanishes on all twists.
%\end{lem}

%\begin{mydef}
%We will call an $m$-Arf function which is compatible with~$\tau$
%{\it special\/} if there exists a twist on which the function does not vanish
%and {\it non-special} if it vanishes on all twists.
%\end{mydef}

\begin{mydef}
Two lifts~$(\hGa^*)_1$ and $(\hGa^*)_2$ of a real Fuchsian group~$\hGa$ are {\it similar\/}
if $(\hGa^*)_1^-=(\hGa^*)_2^-\cdot U^q$ for some integer~$q$.
\end{mydef}

\begin{rem}
Note that for similar lifts~$(\hGa^*)_1$ and $(\hGa^*)_2$ of a real Fuchsian group~$\hGa$
we have $(\hGa^*)_1^+=(\hGa^*)_2^+$.
\end{rem}
 
\begin{thm}
\label{corresp-lifts-arf}
% Nbook, Lemma 4.2, p.84
% Nbook, Thm 4.1, p.84
Let $\hGa$ be a real Fuchsian group.
The mapping that assigns to a lift~$\hGa^*$ of~$\hGa$ into~$\Gm$ the $m$-Arf function of~$(\hGa^*)^+$
establishes a 1-1-correspondence between similarity classes of lifts of the real Fuchsian group~$\hGa$
and real $m$-Arf functions on the Klein surface $[\hGa]$.
% non-special
\end{thm}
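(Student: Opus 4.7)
The plan is to combine Theorem~\ref{thm-corresp-lifts-arf} with an extension argument connecting lifts of $\hGa$ to lifts of $\Ga = \hGa^+$. The restriction $\hGa^* \mapsto (\hGa^*)^+ = \hGa^* \cap \Pm$ sends a lift of $\hGa$ to a lift of $\Ga$, and the induced $m$-Arf function is real by Theorem~\ref{thm-lifts-to-Arf-functions}; similar lifts of $\hGa$ share the same positive part by the preceding remark, so the map descends to similarity classes. For injectivity, suppose two lifts $(\hGa^*)_1,(\hGa^*)_2$ of $\hGa$ induce the same real $m$-Arf function. By Theorem~\ref{thm-corresp-lifts-arf} their positive parts coincide; call the common lift $\Ga^*$. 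Picking any $f \in \hGa^-$ with unique lifts $F_i \in (\hGa^*)_i^-$, the product $F_1 F_2^{-1}$ projects to the identity in $G$, so $F_1 = U^q F_2$ for some $q$. Since $U$ is central in $\Pm$ and satisfies $UF = F U^{-1}$ for $F \in \Nm$ (a direct consequence of Proposition~\ref{J-action}), we obtain $(\hGa^*)_1^- = F_1\Ga^* = (\hGa^*)_2^- \cdot U^{-q}$, establishing similarity.

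The real work is surjectivity. Given a real $m$-Arf function $\arf$, Theorem~\ref{thm-corresp-lifts-arf} furnishes the corresponding lift $\Ga^* \subset \Pm$ of $\Ga$, and I aim to produce $F \in \Nm$ such that $\hGa^* := \Ga^* \cup F\Ga^*$ is a subgroup of $\Gm$ projecting isomorphically onto $\hGa$. A standard coset computation reduces this to two conditions on $F$: (i) $F\Ga^* F^{-1} = \Ga^*$, and (ii) $F^2 \in \Ga^*$. To verify (i), take a hyperbolic $g \in \Ga$ corresponding to $c \in \piorbO(P)$ with lift $G \in \Ga^*$; Proposition~\ref{Nm-conj} gives $\levm(FGF^{-1}) = -\levm(G) = -\arf(c)$, while compatibility of $\arf$ with $\tau$ and Proposition~\ref{prop-induced-inv} show that the lift of $fgf^{-1}$ in $\Ga^*$ also has level $\arf(\tau c) = -\arf(c)$. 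Uniqueness of a lift at a prescribed level then forces $FGF^{-1}$ to equal that element of $\Ga^*$, so (i) holds for any choice of $F$ projecting into $\hGa^-$.

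Condition (ii) is the main obstacle, and I would resolve it by exploiting the geometry of $\hGa^-$. Since $\hGa \ne \hGa^+$, every element of $\hGa^-$ is an orientation-reversing hyperbolic isometry, hence either a reflection $\bar c$ (with $c$ an oval) or a glide reflection $\tilde c = \bar c\sqrt{c}$ (with $c$ a twist), as captured by Lemma~\ref{lem-bar-tilde-c}. If an oval exists, a conjugation argument combining Propositions~\ref{J-squared} and~\ref{J-action} shows that every lift $F$ of $\bar c$ satisfies $F^2 = e_{\Gm} \in \Ga^*$. If only twists exist, I would conjugate so that $c = \tau_{0,\infty}(\la)$ and $\tilde c = j\cdot\tau_{0,\infty}(\sqrt{\la})$, and then compute exactly as in Lemma~\ref{lem-non-special-arf}, using $JT_{0,\infty}(\sqrt{\la})J = T_{0,\infty}(\sqrt{\la})$ and $J^2 = e_{\Gm}$, to obtain $F^2 = T_{0,\infty}(\la)$ for any lift $F$ of $\tilde c$. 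The assumption that $\arf$ vanishes on twists forces the lift of $c$ in $\Ga^*$ to be exactly $T_{0,\infty}(\la)$, so $F^2 \in \Ga^*$. Once $F$ is chosen, the routine subgroup and bijection checks for $\hGa^* = \Ga^* \cup F\Ga^*$ complete the construction, and the induced $m$-Arf function is $\arf$ by Theorem~\ref{thm-corresp-lifts-arf}.
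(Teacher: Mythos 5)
Your proposal is correct and follows essentially the same route as the paper: reduce to the unique lift $\Ga^*$ of $\hGa^+$ via Theorem~\ref{thm-corresp-lifts-arf}, verify $F\Ga^*F^{-1}=\Ga^*$ using Propositions~\ref{prop-induced-inv} and~\ref{Nm-conj} together with compatibility, and verify $F^2\in\Ga^*$ by the explicit computations with $J$ and $T_{0,\infty}$, invoking the vanishing of $\arf$ on twists. The only cosmetic differences are that you split the $F^2$ check into ``an oval exists'' versus ``only twists'' rather than separating versus non-separating, and that you spell out the injectivity/similarity step that the paper leaves largely implicit.
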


\begin{proof}
According to Theorem~\ref{thm-lifts-to-Arf-functions}, we can assign to any lift~$\hGa^*$ of~$\hGa$
a real $m$-Arf function~$\arf=\arf_{\hGa^*}$ on $[\hGa]$.

\myskip
Let $\arf$ be a real $m$-Arf function on $[\hGa]$.
We will show that there exist $m$ lifts of $\hGa$
such that the $m$-Arf function induced by each of the lifts is equal to~$\arf$.
Moreover, these $m$ lifts are similar to each other.

\myskip
According to Theorem~\ref{thm-corresp-lifts-arf} there exists a unique lift~$\Ga^*$ into~$\Pm$ of the group~$\Ga=\hGa^+$
such that the induced $m$-Arf function is equal to~$\arf$.
Choose an element $f$ in~$\hGa^-$.
To uniquely determine a lift of the group $\hGa=\<\Ga,f\>=\Ga\cup f\cdot\Ga$
we need to choose one of the $m$~pre-images of~$f$.
We will show that for any pre-image~$F$ of~$f$ the subset $\Ga^*\cup F\cdot\Ga^*$ is a subgroup and hence a lift of~$\hGa$. 
To this end we need to prove that $F\Ga^*F^{-1}=\Ga^*$ and $F^2\in\Ga^*$.

\myskip
We will start with $F\Ga^*F^{-1}=\Ga^*$:
Let $C\in\Ga^*$ be a pre-image of~$c\in\Ga$, i.e. $\levm(C)=\arf(c)$.
Using Proposition~\ref{prop-induced-inv}, Definition~\ref{def-real-arf} and Proposition~\ref{Nm-conj} we obtain
$$\arf(fcf^{-1})=\arf(\tau c)=-\arf(c)=-\levm(C)=\levm(FCF^{-1}),$$
i.e.\ $FCF^{-1}$ is a pre-image of $fcf^{-1}$ with the same level as the lift of $fcf^{-1}$ in~$\Ga^*$.
Hence $FCF^{-1}$ is the lift of~$fcf^{-1}$ in $\Ga^*$.
Thus we proved that $F\Ga^*F^{-1}\subset\Ga^*$.
Considering $F^{-1}$ and $f^{-1}$ instead of $F$ and $f$
we obtain $F^{-1}\Ga^*F\subset\Ga^*$ and hence $\Ga^*\subset F\Ga^*F^{-1}$.
Therefore $F\Ga^*F^{-1}=\Ga^*$.

\myskip
Now we will show $F^2\in\Ga^*$ in the separating case:
In this case, we can assume without loss of generality that $f=j$.
The lifts of~$f$ are then of the form $F=J\cdot U^q$.
Using identities from Proposition~\ref{J-action} we obtain for any integer~$q$ that
$$
  F^2
  =(JU^q)^2
  =(JU^q)(JU^q)
  =(JU^q)(U^{-q}J)
  =J^2
  =\tilde e.
$$ 

\myskip
Now we will show $F^2\in\Ga^*$ in the non-separating case:
In this case, we can assume without loss of generality that $f=j\tau_{0,\infty}(\la/2)$,
where the element~$\tau_{0,\infty}(\la)$ corresponds to a twist.
The lifts of~$f$ are then of the form $F=JT_{0,\infty}(\la/2)U^q$.
Using identities from Proposition~\ref{J-action} we obtain as in the proof of Lemma~\ref{lem-non-special-arf}
that for any integer~$q$
$$
  F^2
  =(JT_{0,\infty}(\la/2)\cdot U^q)^2
  =T_{0,\infty}(\la).
$$
Since the Arf function~$\arf$ is real, we have
$$\arf(\tau_{0,\infty}(\la))=0=\levm(T_{0,\infty}(\la)),$$
hence the element $F^2=T_{0,\infty}(\la)$ is in~$\Ga^*$.

\myskip
We have now established that for any pre-image~$F$ of~$f$,
the properties $F\Ga^*F^{-1}=\Ga^*$ and $F^2\in\Ga^*$ are satisfied,
hence the subgroup of~$\Gm$ generated by $\Ga^*$ and $F$ is a lift of ~$\hGa$.
\end{proof}

\subsection{From Higher Spin Klein Surfaces to Lifts of Real Fuchsian Groups}

\begin{thm}
\label{corresp-bundles-lifts}
There is a 1-1-correspondence between $m$-spin bundles on Klein surfaces
and similarity classes of lifts of real Fuchsian groups into the $m$-fold cover~$\Gm$ of~$\Aut(\hyp)$.
\end{thm}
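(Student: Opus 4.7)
The plan is to show that the constructions $\Phi\colon \hGa^*\mapsto(e_{\Ga^*},\be_{\hGa^*})$ and $\Psi\colon(e,\be)\mapsto\hGa^*$ from the two preceding propositions descend to mutually inverse bijections between similarity classes of lifts of real Fuchsian groups into $\Gm$ and isomorphism classes of $m$-spin bundles on Klein surfaces. Two things must be verified: that $\Phi$ respects similarity (so that similar lifts give isomorphic bundles), and that the two maps invert each other on the respective quotients.

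For the first point, suppose $(\hGa^*)_1^-=(\hGa^*)_2^-\cdot U^q$. Then $(\hGa^*)_1^+=(\hGa^*)_2^+=:\Ga^*$, so the underlying bundle $L_{\Ga^*}=(\hyp\times\c)/\Ga^*$ agrees for both lifts. The multiplication rule in $\Gm$ sends an element $(g,\de_g)\in(\hGa^*)_2^-$ to $(g,\,e^{-2\pi iq/m}\de_g)\in(\hGa^*)_1^-$, so the involutions $\be_1,\be_2$ differ fiberwise by the constant scalar $e^{-2\pi iq/m}$. Setting $\mu=e^{-i\pi q/m}$, the constant fiber scaling $(z,x)\mapsto(z,\mu x)$ is automatically $\Ga^*$-equivariant, descends to a bundle automorphism of $L_{\Ga^*}$ over $P$, and a short calculation verifies it intertwines $\be_1$ with $\be_2$, giving the required isomorphism of $m$-spin bundles on $[\hGa]$.

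For the second point, starting from $\hGa^*$ and applying $\Phi$ then $\Psi$: the lift $\tilde\be$ of $\be_{\hGa^*}$ to the universal cover has the form $(z,x)\mapsto(g(z),\de_g(z)\bar x)$ for any chosen $(g,\de_g)\in(\hGa^*)^-$, so $\Psi$ reads off exactly this element and recovers $\hGa^*=\langle\Ga^*,(g,\de_g)\rangle$. Starting instead from $(e,\be)$ and applying $\Psi$ then $\Phi$: the preceding proposition identifies $L$ with $(\hyp\times\c)/\Ga^*$ via Theorem~\ref{thm-corresp-lifts-bundles} and writes $\tilde\be$ in the canonical form $(z,x)\mapsto(g(z),a(z)\bar x)$, from which $\Phi$ reconstructs $\be$ exactly. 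The residual ambiguity comes from the freedom in trivializing $L$ on the universal cover compatibly with the $m$-spin structure, a group of size $m$; its action on $(g,a)$ is right multiplication by powers of $U$, which is precisely the similarity ambiguity. The main obstacle will be this bookkeeping --- matching the fiber-scalar shift $e^{-2\pi iq/m}$ with the group-theoretic $U^q$ shift via the square root $\mu=e^{-i\pi q/m}$ --- but once carried out, the correspondence follows by unwinding the explicit formulas of the two preceding propositions in conjunction with Theorem~\ref{thm-corresp-lifts-bundles}.
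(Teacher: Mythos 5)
Your proposal is correct and follows essentially the same route as the paper: the paper's proof consists precisely of your first point, showing that similar lifts (differing by $U^q$) yield involutions differing by the constant scalar $\zeta=e^{-2\pi iq/m}$ on the fibre, intertwined by the scaling $(z,x)\mapsto(z,\sqrt{\zeta}\,x)$. Your additional verification that the two constructions are mutually inverse, with the similarity ambiguity accounting exactly for the $m$-fold freedom in the fibre trivialisation, is left implicit in the paper but is consistent with its two preceding propositions.
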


\begin{proof}
We will show that similar lifts of a real Fuchsian group~$\hGa$
induce isomorphic $m$-spin bundles on the corresponding Klein surface.
The uniformisations of the anti-holomorphic involutions that correspond to similar lifts of~$\hGa$ are of the form
$\tilde\be_1(z,x)=(g(z),\de_g(z)\cdot\bar x)$ and $\tilde\be_2(z,x)=(g(z),\zeta\cdot\de_g(z)\cdot \bar x)$,
where $\zeta\in\c$, $\zeta^m=1$.
Then taking $\varphi_L(z,x)=(z,\sqrt{\zeta}\cdot x)$ and $\varphi_P=\id_P$
(see Definition~\ref{def-isomorphic-m-spin-bundles}) we can see
that two $m$-spin bundles on the Klein surface are isomorphic:
\begin{align*}
  &\varphi_L(\tilde\be_1(z,x))
  =\varphi_L(g(z),\de_g(z)\cdot\bar x)
  =(g(z),\sqrt{\zeta}\cdot\de_g(z)\cdot\bar x),\\
  &\tilde\be_2(\varphi_L(z,x))
  =\tilde\be_2(z,\sqrt{\zeta}\cdot x)
  =(g(z),\zeta\cdot\de_g(z)\cdot\overline{\sqrt{\zeta}\cdot x})\\
  &=(g(z),\zeta\cdot\sqrt{\bar\zeta}\cdot\de_g(z)\cdot\bar x)
  =(g(z),\sqrt{\zeta}\cdot\de_g(z)\cdot\bar x).\qedhere
\end{align*}
\end{proof}

\bigskip\noindent
Theorems~\ref{corresp-lifts-arf} and~\ref{corresp-bundles-lifts} immediately imply

\begin{thm}
\label{corresp-bundles-arf}
% Nbook, Thm 5.1, p.87
The mapping that assigns to an $m$-spin bundle on a Klein surface the corresponding real $m$-Arf function
establishes a 1-1-correspondence.
\end{thm}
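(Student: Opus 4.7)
The plan is essentially to compose the two bijections already established in the excerpt. By Theorem~\ref{corresp-bundles-lifts}, every $m$-spin bundle on a Klein surface arises, uniquely up to isomorphism, from a similarity class of lifts of a real Fuchsian group~$\hGa$ into~$\Gm$. By Theorem~\ref{corresp-lifts-arf}, each such similarity class corresponds bijectively to a real $m$-Arf function on the Klein surface $[\hGa]$. Composing these two bijections yields a 1-1-correspondence between $m$-spin bundles on Klein surfaces and real $m$-Arf functions.

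The only thing to check is that the composite bijection agrees with the map described in the statement, namely ``assign to an $m$-spin bundle its corresponding real $m$-Arf function''. I would verify this by unwinding the construction: given an $m$-spin bundle $(e,\be)$ on $(P,\tau)$, the proof of Theorem~\ref{corresp-bundles-lifts} produces a lift $\hGa^*$ of a uniformising real Fuchsian group $\hGa$, and in particular its orientation-preserving part $\Ga^*=\hGa^*\cap\Pm$ is a lift of $\Ga=\hGa^+$ into~$\Pm$. The associated $m$-Arf function $\arf_{\Ga^*}$ on $P=\hyp/\Ga$ is, by Theorem~\ref{thm-lifts-to-Arf-functions}, already a real $m$-Arf function on $(P,\tau)$, and this is precisely the function Theorem~\ref{corresp-lifts-arf} attaches to the similarity class of $\hGa^*$. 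Thus the composite is indeed the natural assignment.

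The one subtle point, which is really the content of the theorem, is that the bijection is well-defined on isomorphism classes of $m$-spin bundles: different representatives within an isomorphism class must induce the same real $m$-Arf function. This follows because an isomorphism in the sense of Definition~\ref{def-isomorphic-m-spin-bundles} at the level of universal covers modifies the transition data by an $m$-th root of unity, which is exactly the equivalence relation that defines the similarity classes of lifts in the proof of Theorem~\ref{corresp-bundles-lifts}; conversely, similar lifts induce the same $m$-Arf function by the remark following the definition of similarity. So no genuinely new obstacle appears beyond the two theorems already proved, and the main (essentially bookkeeping) step is simply verifying that the two equivalences---isomorphism of spin bundles and similarity of lifts---match up under both constructions, which has already been carried out in the preceding proofs.
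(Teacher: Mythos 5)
Your proposal is correct and takes essentially the same route as the paper: the paper states that Theorem~\ref{corresp-bundles-arf} follows immediately by composing the bijections of Theorems~\ref{corresp-lifts-arf} and~\ref{corresp-bundles-lifts}, which is exactly your argument. Your additional remarks unwinding the constructions and checking compatibility of the two equivalence relations are a harmless (and arguably welcome) elaboration of what the paper leaves implicit.
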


%\subsection{Reduction of Arf functions modulo 2}

%From Definition~\ref{def-m-arf} it is clear
%that if $\arf:\piorbO(P)\to\z/m\z$ is a real $m$-Arf function on~$(P,\tau)$ and $m$ is even,
%then $(\arf~(\mod2)):\piorbO(P)\to\z/2\z$ is a real $2$-Arf function on~$(P,\tau)$.

\section{Real Arf Functions}

\label{sec-classification}

%\section{Topological Invariants}
%\label{sec-topinv}

\subsection{Topological Invariants of Klein Surfaces}

\label{subsec-topinv-klein}

\begin{mydef}
Given two Klein surfaces $(P_1,\tau_1)$ and $(P_2,\tau_2)$,
we say that they are {\it topologically equivalent\/}
if there exists a homeomorhism~$\phi:P_1\to P_2$ such that $\phi\circ\tau_1=\tau_2\circ\phi$.
\end{mydef}

Recall that a Klein surface $(P,\tau)$ is called non-separating if the set~$P\backslash P^{\tau}$ is connected,
otherwise $(P,\tau)$ is called separating.

\begin{mydef}
The {\it topological type\/} of a Klein surface~$(P,\tau)$ is the triple~$(g,k,\ve)$,
where $g$ is the genus of the Riemann surface~$P$,
$k$ is the number of connected components of the fixed point set $P^{\tau}$ of~$\tau$,
$\ve=0$ if $(P,\tau)$ is non-separating and $\ve=1$ otherwise.
We say that a real Fuchsian group $\hGa$ is of topological type $(g,k,\ve)$
if the corresponding Klein surface is of topological type $(g,k,\ve)$.
\end{mydef}

In this paper we consider hyperbolic compact surfaces, hence $g\ge2$.
The following result of Weichold~\cite{Weichold:1883} gives a classification of Klein surfaces
up to topological equivalence:

\begin{thm}
\label{thm-Weichold}
% Nbook, Corollary 1.1 on p.73
Two Klein surfaces are topologically equivalent if and only if they are of the same topological type.
A triple $(g,k,\ve)$ is a topological type of some Klein surface if and only if
either $\ve=1$, $1\le k\le g+1$, $k\equiv g+1~(\mod2)$ or $\ve=0$, $0\le k\le g$. 
\end{thm}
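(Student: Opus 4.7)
The plan is to identify each Klein surface $(P,\tau)$ with the topological type of its quotient $Q=P/\langle\tau\rangle$, a compact topological surface with boundary (possibly non-orientable), and reduce Weichold's theorem to the classical topological classification of compact surfaces with boundary. The triple $(g,k,\ve)$ will serve as a complete invariant of $Q$: the boundary $\dd Q$ has $k$ components (the image of $P^{\tau}$), $Q$ is orientable precisely when $\ve=1$, and $\chi(Q)=1-g$.

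First I would verify invariance of $(g,k,\ve)$. Any equivalence $\phi:P_1\to P_2$ with $\phi\tau_1=\tau_2\phi$ preserves the genus of~$P$, carries $P_1^{\tau_1}$ homeomorphically onto $P_2^{\tau_2}$, and descends to a homeomorphism of quotients; hence $(g,k,\ve)$ is a topological invariant.

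Next I would derive the arithmetic conditions and realisation. The projection $\pi:P\to Q$ is locally modelled near $\dd Q$ on the reflection quotient $(u,v)\mapsto(u,|v|)$ of a disc in~$\c$, so it is a topological double cover ramified only over $\dd Q$; a simplicial count gives $\chi(P)=2\chi(Q)$, whence $\chi(Q)=1-g$. If $\ve=1$ then $Q$ is orientable of some genus $g'\ge 0$ with $k\ge 1$ holes (the case $k=0$ would force $P\setminus P^{\tau}=P$ connected, contradicting $\ve=1$), so $g=2g'+k-1$, yielding $1\le k\le g+1$ and $k\equiv g+1~(\mod 2)$. If $\ve=0$ then $Q$ is non-orientable with $\gamma\ge 1$ cross-caps and $k\ge 0$ holes, so $\gamma=g-k+1$, yielding $0\le k\le g$. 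For each allowed triple the surface $Q$ exists by the classification of compact surfaces with boundary, and $P$ is produced as the orientable double cover of $Q$ with the above local reflection model: in the orientable case this is the Schottky double $Q\cup_{\dd Q}\bar Q$, and in the non-orientable case it is obtained by gluing the orientation double cover of $Q$ to itself along its boundary via the restriction of the covering involution.

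The substantial direction is that two Klein surfaces sharing $(g,k,\ve)$ are topologically equivalent. The quotients $Q_1,Q_2$ are then homeomorphic surfaces with boundary by the classical classification, and one may choose a homeomorphism $\psi:Q_1\to Q_2$ with $\psi(\dd Q_1)=\dd Q_2$. The main obstacle is lifting $\psi$ to a homeomorphism $\phi:P_1\to P_2$ satisfying $\phi\tau_1=\tau_2\phi$. The key point is that the orientable double cover of $Q$ locally modelled on the reflection $(u,v)\mapsto(u,|v|)$ is unique up to isomorphism over~$Q$: its restriction to $Q\setminus\dd Q$ is forced to be the orientation double cover (trivial when $Q$ is orientable, the unique nontrivial connected one otherwise), and the completion over $\dd Q$ is forced by the local reflection model. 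Uniqueness of this cover then produces the required equivariant lift of~$\psi$, completing the proof.
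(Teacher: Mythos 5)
The paper does not actually prove this statement: it is quoted as a classical result of Weichold, and the structural information the paper later relies on is packaged group-theoretically in Theorem~\ref{thm-gen} (standard generating sets of real Fuchsian groups), i.e.\ via uniformisation by NEC groups. Your argument is therefore not comparable to a proof in the text, but it is the standard purely topological proof and it is essentially correct: pass to $Q=P/\langle\tau\rangle$, read off $g=1-\chi(Q)$, $k=$ number of boundary components, $\ve=$ orientability, invoke the classification of compact surfaces with boundary, and recover $(P,\tau)$ as the (unique) double of $Q$. The key lemma you isolate --- uniqueness over $Q$ of the orientable double folded along $\dd Q$ --- is the right one, and your reason is the right one: over the interior the cover is pinned down by requiring an orientable total space with orientation-reversing deck transformation. (Your parenthetical ``the unique nontrivial connected one'' should carry that qualifier: a non-orientable $Q$ generally has many connected double covers, but only the orientation cover has orientable total space.) Your doubling construction also agrees with the paper's informal description of separating versus non-separating surfaces as doubles glued by the identity or by half-turns. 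What each approach buys: the Fuchsian-group route gives at the same time the generating sets and the uniformisation needed for the Arf-function machinery, while your route is more elementary and works in the topological category directly.

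Two points need supplementing. First, the local reflection model near $P^{\tau}$, which you use to see $Q$ as a surface with boundary and $\pi$ as a fold, must be justified; for an anti-holomorphic involution one chooses a holomorphic coordinate near a fixed point in which $\tau(z)=\bar z$. Second, and this is the one genuine gap in the realisation direction: your construction produces a closed orientable \emph{topological} surface with an orientation-reversing \emph{topological} involution, whereas a Klein surface in the sense of the paper requires $P$ to be a Riemann surface and $\tau$ anti-holomorphic. This is repaired in one line --- average a Riemannian metric over $\langle\tau\rangle$ to obtain a $\tau$-invariant conformal structure, for which the orientation-reversing isometry $\tau$ is automatically anti-holomorphic --- but it has to be said, since otherwise you have only classified involutions, not shown that every admissible triple is realised by a Klein surface.
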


\myskip
Any separating Klein surface can be obtained by gluing together a Riemann surface with boundary
with its copy via the identity map along the boundary components.
If we replace the identity map with a half-turn on some of the boundary components,
we obtain a non-separating Klein surface.
Moreover, all non-separating Klein surfaces are obtained this way.

\myskip
We will use the following description of generating sets of Fuchsian groups (see~\cite{NP:2005, NP:2009, Zieschang:book}):

% \cite{Zieschang:book}, p.~150
\begin{mydef}
\label{def-standard-basis}
%We define the product $ab$ of two closed curves $a$ and~$b$ in $\piorb(P)$
%as the closed curve given by the path of~$b$ followed by the path of~$a$.
Consider a compact Riemann surface~$P$ of genus~$g$ with $n$~holes
and the corresponding Fuchsian group~$\Ga$ such that $P=\hyp/\Ga$.
Let $p\in P$.
A {\it standard generating set\/} of the fundamental group $\piorb(P,p)\cong\Ga$ is a set
$$(a_1,b_1,\dots,a_g,b_g,c_1,\dots,c_n)$$
of elements in~$\piorb(P,p)$ that can be represented by simple closed curves on~$P$ based at~$p$
$$(\tilde a_1,\tilde b_1,\dots,\tilde a_g,\tilde b_g,\tilde c_1,\dots,\tilde c_n)$$
%called a {\it canonical system of curves\/}
with the following properties:
\begin{enumerate}[$\bullet$]
\item
The curve $\tilde c_i$ encloses a hole in $P$ for $i=1,\dots,n$.
\item
Any two curves only intersect at the point~$p$.
%$\tilde a_i\cap\tilde b_j=\tilde a_i\cap\tilde c_j=\tilde b_i\cap\tilde c_j=\tilde c_i\cap\tilde c_j=\{p\}$.
\item
A neighbourhood of the point $p$ with the curves is homeomorphic to the one shown in Figure~\ref{fig-basis}.
\begin{figure}[h]
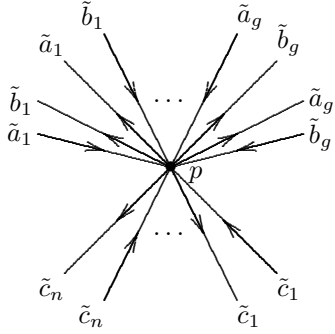

  \begin{center}
    \forcehmode
      \bgroup
        \beginpicture
          \setcoordinatesystem units <25 bp,25 bp>
          \multiput {\phantom{$\bullet$}} at -2 -2 2 2 /
          \plot 0 0 -2 0.5 /
          \arrow <7pt> [0.2,0.5] from -2 0.5 to -1 0.25
          \plot 0 0 -2 1 /
          \arrow <7pt> [0.2,0.5] from 0 0 to -1 0.5
          \plot 0 0 -1.6 1.6 /
          \arrow <7pt> [0.2,0.5] from 0 0 to -0.8 0.8
          \plot 0 0 -1 2 /
          \arrow <7pt> [0.2,0.5] from -1 2 to -0.5 1
          \plot 0 0 1 2 /
          \arrow <7pt> [0.2,0.5] from 1 2 to 0.5 1
          \plot 0 0 1.6 1.6 /
          \arrow <7pt> [0.2,0.5] from 0 0 to 0.8 0.8
          \plot 0 0 2 1 /
          \arrow <7pt> [0.2,0.5] from 0 0 to 1 0.5
          \plot 0 0 2 0.5 /
          \arrow <7pt> [0.2,0.5] from 2 0.5 to 1 0.25
          \plot 0 0 1.6 -1.6 /
          \arrow <7pt> [0.2,0.5] from 1.6 -1.6 to 0.8 -0.8
          \plot 0 0 1 -2 /
          \arrow <7pt> [0.2,0.5] from 0 0 to 0.5 -1
          \plot 0 0 -1 -2 /
          \arrow <7pt> [0.2,0.5] from -1 -2 to -0.5 -1
          \plot 0 0 -1.6 -1.6 /
          \arrow <7pt> [0.2,0.5] from 0 0 to -0.8 -0.8
          \put {$\tilde a_1$} [r] <-2pt,0pt> at -2 0.5
          \put {$\tilde b_1$} [r] <-2pt,0pt> at -2 1
          \put {$\tilde a_1$} [br] <0pt,2pt> at -1.6 1.6
          \put {$\tilde b_1$} [br] <0pt,2pt> at -1 2
          \put {$\tilde a_g$} [bl] <0pt,2pt> at 1 2
          \put {$\tilde b_g$} [bl] <0pt,2pt> at 1.6 1.6
          \put {$\tilde a_g$} [l] <2pt,0pt> at 2 1
          \put {$\tilde b_g$} [l] <2pt,0pt> at 2 0.5
          \put {$\tilde c_1$} [tl] <0pt,-2pt> at 1 -2
          \put {$\tilde c_1$} [tl] <0pt,-2pt> at 1.6 -1.6
          \put {$\tilde c_n$} [tr] <0pt,-2pt> at -1.6 -1.6
          \put {$\tilde c_n$} [tr] <0pt,-2pt> at -1 -2
          \put {$\dots$} at 0 1
          \put {$\dots$} at 0 -1
          \put {$\bullet$} at 0 0
          \put {$p$} [l] <7pt,-3pt> at 0 0 
        \endpicture
      \egroup
  \end{center}
  \caption{Standard generating set}
  \label{fig-basis}
\end{figure}
\item
The system of curves cuts the surface~$P$ into $n+1$ connected components of which $n$ are homeomorphic to an annulus
and one is homeomorphic to a disc and has boundary
$$
  \tilde a_1\tilde b_1\tilde a_1^{-1}\tilde b_1^{-1}\dots\tilde a_g\tilde b_g\tilde a_g^{-1}\tilde b_g^{-1}
  \tilde c_1\dots\tilde c_n.
$$
\end{enumerate}
\end{mydef}

\myskip
We will use the following description of generating sets of real Fuchsian groups given in~\cite{Nbook,N1975,N1978moduli}:

\begin{thm}
\label{thm-gen}

\smallskip
{\bf(Generating sets of real Fuchsian groups)}

Recall that for a hyperbolic automorphism~$c\in\Aut_+(\hyp)$,
$\bar c$ is the reflection whose mirror coincides with the axis of~$c$,
$\sqrt{c}$ is the hyperbolic automorphism such that~$(\sqrt{c})^2=c$
and $\tilde c=\bar c\sqrt{c}$.

\begin{enumerate}[1)]
\item
Let $(g,k,1)$ be a topological type of a Klein surface, i.e.\ $1\le k\le g+1$ and $k\equiv g+1~(\mod2)$. 
Set $n=k$.
Consider a Fuchsian group~$\Ga$ 
%of signature~$(\tilde g,n=k)$
such that $\hyp/\Ga$ is a Riemann surface of genus~$\tilde g=(g+1-n)/2$ with $n$~holes.
If $(a_1,b_1,\dots,a_{\tilde g},b_{\tilde g},c_1,\dots,c_n)$ is a standard generating set of~$\Ga$,
then
$$(a_1,b_1,\dots,a_{\tilde g},b_{\tilde g},c_1,\dots,c_n,\bar c_1,\dots,\bar c_n)$$
is a generating set of a real Fuchsian group~$\hGa$ of topological type $(g,k,1)$.
Any real Fuchsian group of topological type~$(g,k,1)$ is obtained this way.
\item
Let $(g,k,0)$ be a topological type of a Klein surface, i.e.\ $0\le k\le g$.
We choose $n\in\{k+1,\dots,g+1\}$ such that $n\equiv g+1~(\mod2)$.
Consider a Fuchsian group~$\Ga$
%of signature~$(\tilde g,n)$
such that $\hyp/\Ga$ is a Riemann surface of genus~$\tilde g=(g+1-n)/2$ with $n$~holes.
If $(a_1,b_1,\dots,a_{\tilde g},b_{\tilde g},c_1,\dots,c_n)$ is a standard generating set of~$\Ga$,
then
$$(a_1,b_1,\dots,a_{\tilde g},b_{\tilde g},c_1,\dots,c_n,\bar c_1,\dots,\bar c_k,\tilde c_{k+1},\cdots,\tilde c_n)$$
is a generating set of a real Fuchsian group~$\hGa$ of topological type $(g,k,0)$.
Any real Fuchsian group of topological type~$(g,k,0)$ is obtained this way.
\item
Let $\hGa$ be a real Fuchsian group as in part~1 or~2 and $(P,\tau)$ be the corresponding Klein surface.
We now interpret the elements $(a_1,b_1,\dots,a_{\tilde g},b_{\tilde g},c_1,\dots,c_n)$ in~$\hGa$
as loops in $P$ without a base point up to homotopy of free loops.
We have $P^{\tau}=c_1\cup\dots\cup c_k$.
The curves $c_1,\dots,c_k$ correspond to ovals, the curves $c_{k+1},\dots,c_n$ correspond to twists.
Let $P_1$ and~$P_2$ be the connected components of the complement of the curves $c_1,\dots,c_n$ in~$P$.
Each of these components is a surface of genus $\tilde g=(g+1-n)/2$ with $n$~holes.
We have $\tau(P_1)=P_2$.
We will refer to $P_1$ and $P_2$ as {\it decomposition of}~$(P,\tau)$ {\it in two halves}.
(Note that such a decomposition is unique if $(P,\tau)$ is separating,
but is not unique if $(P,\tau)$ is non-separating since the twists $c_{k+1},\dots,c_n$ can be chosen in different ways.)
Then
$$(a_1,b_1,\dots,a_{\tilde g},b_{\tilde g},c_1,\dots,c_n)$$
is a generating set of $\piorb(P_1)$,
while its image under~$\tau$ gives a generating set of $\piorb(P_2)$.
%$$((\tau a_1)^{-1},(\tau b_1)^{-1},\dots,(\tau a_{\tilde g})^{-1},(\tau b_{\tilde g})^{-1},c_1,\dots,c_n)$$
%slightly modified is a generating set of and $\piorb(P_2)$.
\end{enumerate}
\end{thm}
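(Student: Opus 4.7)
The plan is to derive both presentations by constructing the real Fuchsian group directly from a geometric decomposition of the Klein surface into two mirror-image halves. Starting with $(P,\tau)$ of topological type $(g,k,\ve)$, I would first choose a $\tau$-invariant system of $n$ pairwise disjoint simple closed curves $c_1,\dots,c_n$ whose complement decomposes $P$ into two connected pieces $P_1,P_2=\tau(P_1)$. In the separating case one simply takes $n=k$ and sets $c_1,\dots,c_k$ equal to the ovals. In the non-separating case, since cutting along the $k$ ovals alone would leave $P\setminus P^\tau$ connected, one must add $n-k$ further $\tau$-invariant simple closed curves avoiding $P^\tau$ (twists); the parity condition $n\equiv g+1\pmod 2$ and the range $k+1\le n\le g+1$ are exactly what is needed for such a twist system to exist and separate. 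An Euler characteristic calculation $2-2g=2(2-2\tilde g-n)$ then forces the genus of each half to be $\tilde g=(g+1-n)/2$.

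Next I would uniformise. Each half $P_j$ is a compact bordered Riemann surface of genus $\tilde g$ with $n$ holes, so Definition~\ref{def-standard-basis} yields hyperbolic generators $(a_1,b_1,\dots,a_{\tilde g},b_{\tilde g},c_1,\dots,c_n)$ of the Fuchsian group $\Ga=\hGa^+$ uniformising $P=\hyp/\Ga$, with each $c_i$ represented by a loop enclosing the $i$-th boundary component. Since $\Ga$ has index $2$ in $\hGa$, to generate $\hGa$ it suffices to adjoin one element of $\hGa^-$ for each boundary curve; part 3) of the statement will then follow by reading off the $\tau$-action on the decomposition.

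The key step is identifying those orientation-reversing generators. For an oval $c_j$ ($1\le j\le k$), $\tau$ fixes the curve pointwise, so any lift to $\hyp$ must pointwise fix the axis of $c_j\in\Ga$; the only isometry of $\hyp$ with this property is the reflection $\bar c_j$ in that geodesic, which must therefore lie in $\hGa^-$. For a twist $c_j$ ($k<j\le n$), $\tau$ preserves the curve but acts on it as a half-turn with no fixed points; lifting to $\hyp$, the corresponding element of $\hGa^-$ must preserve the axis of $c_j$ and act on it as a glide of half the translation length of $c_j$. The unique such isometry is the glide reflection $\tilde c_j=\bar c_j\sqrt c$, and the identity $\tilde c_j^{\,2}=c_j$ is precisely the relation needed for the quotient to identify the two sides of the axis antipodally, producing a twist rather than an oval.

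The main obstacle is the converse — showing that \emph{every} real Fuchsian group of topological type $(g,k,\ve)$ arises via this construction from some decomposition into halves. This requires knowing that any two Klein surfaces of the same topological type are topologically equivalent (Theorem~\ref{thm-Weichold}), and, in the non-separating case, a surgery argument to show that an appropriate twist system can always be chosen and that the choice of $n$ in $\{k+1,\dots,g+1\}$ with the correct parity can be realised. Once this topological rigidity is in hand, transferring the standard generating set on one half via the uniformisation yields the presentations in parts 1) and 2), and matching generators under $\tau$ recovers the geometric picture of part 3).
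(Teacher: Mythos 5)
The paper itself offers no proof of Theorem~\ref{thm-gen}: it is imported wholesale from \cite{Nbook,N1975,N1978moduli}, so there is nothing internal to compare against. Your geometric core is the right one and matches the standard argument --- ovals force reflections $\bar c_i$ into $\hGa^-$ because an orientation-reversing isometry pointwise fixing the axis of $c_i$ must be the reflection in it; twists force the glide reflections $\tilde c_j=\bar c_j\sqrt{c_j}$ because $\tau$ acts on a twist as a free involution, so its lift translates the axis by half the length of $c_j$ and squares to $c_j$; and the Euler characteristic count $2-2g=2(2-2\tilde g-n)$ gives $\tilde g=(g+1-n)/2$.

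There is, however, a genuine gap in the group-theoretic bookkeeping, and it sits exactly where the generation statement has to be proved. You identify $\Ga$ with $\hGa^+$ and assert that $\Ga$ has index $2$ in $\hGa$. Both are false: $\Ga$ uniformises one \emph{half} and is a free group of rank $2\tilde g+n-1=g$ whose quotient $\hyp/\Ga$ is an infinite-area surface with funnels, whereas $\hGa^+$ is the cocompact genus-$g$ surface group uniformising the double $P=\hyp/\hGa^+$; consequently $\Ga$ has infinite index in $\hGa$ (a finite-index subgroup of a cocompact group would have finite covolume). Your claim is also internally inconsistent --- if the index really were $2$, a single orientation-reversing element would suffice, not one per boundary curve. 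Since the whole point of parts 1) and 2) is that the listed $2\tilde g+2n$ elements generate $\hGa$, the argument as written proves nothing at this step. What is actually needed is a fundamental-domain (Poincar\'e / Klein--Maskit combination) argument: take a fundamental polygon for $\Ga$ acting on the preimage of the convex core of $\hyp/\Ga$, observe that its boundary sides along the axes of the $c_i$ are paired by $\bar c_i$ (resp.\ $\tilde c_i$), conclude that $\<\Ga,\bar c_1,\dots\>$ is discrete with that polygon as fundamental domain, and only then read off that its orientation-preserving subgroup is cocompact of genus $g$ and that the quotient has $k$ ovals. The same machinery (or Theorem~\ref{thm-Weichold} plus a topological normal form for the pair $(P,\tau)$) is what makes the converse --- which you correctly flag as the main obstacle but do not carry out --- more than a gesture.
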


% FIGURE
\begin{figure}[H]
\begin{center}
\leavevmode
  \setcoordinatesystem units <1cm,1cm> point at 0 0
  \setplotarea x from -8 to 8, y from -4 to 3
  \plot -6 -4 6 -4 6 3 -6 3 -6 -4 /
  \plot -2.5 -4 -2.5 3 /
  \plot 1.5 -4 1.5 3 /
%%%%%%%%%%%%%%%%%%%%%%%%%%%
  \ellipticalarc axes ratio 1:2 360 degrees from -4.25 1 center at -4.25 1.5
%  \arrow <10pt> [0.2,0.5] from -4 1.3 to -4 1.7
  \arrow <10pt> [0.2,0.5] from -4.7 1.7 to -4.7 1.3
  \ellipticalarc axes ratio 1:2 360 degrees from -4.25 -1 center at -4.25 -1.5
%  \arrow <10pt> [0.2,0.5] from -4 -1.7 to -4 -1.3
  \arrow <10pt> [0.2,0.5] from -4.7 -1.3 to -4.7 -1.7
  \arrow <10pt> [0.2,0.5] from -5.15 -0.2 to -5.15 0.2
  \arrow <10pt> [0.2,0.5] from -3.35 -0.2 to -3.35 0.2
  \put {$c_i$} [r] <-5pt,0pt> at -4.7 1.5
  \put {$c_j$} [r] <-5pt,0pt> at -4.7 -1.5
  \put {$r_i$} [l] <10pt,0pt> at -4.25 1
  \put {$r_j$} [l] <10pt,0pt> at -4.25 -1
  \put {$\ell$} [r] <-5pt,0pt> at -5.15 0
  \put {$\tau\ell$} [l] <5pt,0pt> at -3.35 0
  \put {$c_i,c_j$~\text{ovals}} [c] at -4.25 -3
%  \multiput {$\bullet$} at -4.25 1 /
%%%%%%%%%%%%%%%%%%%%%%%%%%%
  \ellipticalarc axes ratio 1:2 360 degrees from -1 1 center at -1 1.5
%  \arrow <10pt> [0.2,0.5] from -0.75 1.3 to -0.75 1.7
  \arrow <10pt> [0.2,0.5] from -1.45 1.7 to -1.45 1.3
  \ellipticalarc axes ratio 1:2 360 degrees from -1 -1 center at -1 -1.5
%  \arrow <10pt> [0.2,0.5] from -0.75 -1.7 to -0.75 -1.3
  \arrow <10pt> [0.2,0.5] from -1.45 -1.3 to -1.45 -1.7
  \arrow <10pt> [0.2,0.5] from -1.9 -0.2 to -1.9 0.2
  \arrow <10pt> [0.2,0.5] from 0.8 -0.7 to 0.8 -0.3
  \arrow <10pt> [0.2,0.5] from -0.55 -1.7 to -0.55 -1.3
  \put {$c_i$} [r] <-5pt,0pt> at -1.45 1.5
  \put {$c_j$} [r] <-5pt,0pt> at -1.45 -1.5
  \put {$r_i$} [l] <10pt,4pt> at -1 1
  \put {$r_j$} [l] <5pt,0pt> at -0.55 -1.5
  \put {$\ell$} [r] <-5pt,0pt> at -1.9 0
  \put {$\tau\ell$} [l] <5pt,0pt> at 0.8 -0.5
  \put {$c_i$~\text{oval}, $c_j$~\text{twist}} [c] at -0.5 -3
%  \multiput {$\bullet$} at -1 1 /
%%%%%%%%%%%%%%%%%%%%%%%%%%%
  \ellipticalarc axes ratio 1:2 360 degrees from 3 1 center at 3 1.5
  \ellipticalarc axes ratio 1:2 360 degrees from 3 -1 center at 3 -1.5
%  \arrow <10pt> [0.2,0.5] from 3.25 1.3 to 3.25 1.7
  \arrow <10pt> [0.2,0.5] from 2.55 1.7 to 2.55 1.3
%  \arrow <10pt> [0.2,0.5] from 3.25 -1.7 to 3.25 -1.3
  \arrow <10pt> [0.2,0.5] from 2.55 -1.3 to 2.55 -1.7
  \arrow <10pt> [0.2,0.5] from 2.1 -0.2 to 2.1 0.2
  \arrow <10pt> [0.2,0.5] from 5.4 -0.2 to 5.4 0.2
  \arrow <10pt> [0.2,0.5] from 3.45 -1.7 to 3.45 -1.3
  \arrow <10pt> [0.2,0.5] from 3.45 1.3 to 3.45 1.7
  \put {$c_i$} [r] <-5pt,0pt> at 2.45 1.5
  \put {$c_j$} [r] <-5pt,0pt> at 2.45 -1.5
  \put {$\ell$} [r] <-5pt,0pt> at 2.1 0
  \put {$\tau\ell$} [l] <5pt,0pt> at 5.4 0
  \put {$r_i$} [l] <5pt,0pt> at 3.45 1.5
  \put {$r_j$} [l] <5pt,0pt> at 3.45 -1.5
  \put {$c_i,c_j$~\text{twists}} [c] at 3.75 -3
%  \multiput {$\bullet$} at 3 1 /
%%%%%%%%%%%%%%%%%%%%%%%%%%%
  \setplotsymbol({$\bullet$})
%%%%%%%%%%%%%%%%%%%%%%%%%%%
  \ellipticalarc axes ratio 1:2 360 degrees from -4.25 1 center at -4.25 0
  \arrow <10pt> [0.3,0.8] from -4.75 -0.3 to -4.75 0.3
  \arrow <10pt> [0.3,0.8] from -3.75 0.3 to -3.75 -0.3
%%%%%%%%%%%%%%%%%%%%%%%%%%%
  \ellipticalarc axes ratio 1:2 180 degrees from -1 1 center at -1 0
  \ellipticalarc axes ratio 1:1 180 degrees from -1 -2 center at -1 -0.5
  \ellipticalarc axes ratio 1:2 -180 degrees from -1 -1 center at -1 -1.5
  \arrow <10pt> [0.3,0.8] from -1.5 -0.2 to -1.5 0.2
  \arrow <10pt> [0.3,0.8] from 0.5 -0.3 to 0.5 -0.7
%%%%%%%%%%%%%%%%%%%%%%%%%%%
  \ellipticalarc axes ratio 1:2 180 degrees from 3 1 center at 3 0
  \ellipticalarc axes ratio 1:1 180 degrees from 3 -2 center at 3 0
  \ellipticalarc axes ratio 1:2 180 degrees from 3 1 center at 3 1.5
  \ellipticalarc axes ratio 1:2 -180 degrees from 3 -1 center at 3 -1.5
  \arrow <10pt> [0.3,0.8] from 2.5 -0.2 to 2.5 0.2
  \arrow <10pt> [0.3,0.8] from 5 0.2 to 5 -0.2
%  \arrow <10pt> [0.3,0.8] from 3.25 1.3 to 3.25 1.7
%  \arrow <10pt> [0.3,0.8] from 3.25 -1.7 to 3.25 -1.3
\end{center}
\caption{Bridges}
\label{fig-bridges}
\end{figure}

\begin{mydef}
\label{def-bridge}
Let $P_1$ and $P_2$ be a decomposition of a Klein surface~$(P,\tau)$ in two halves as in Theorem~\ref{thm-gen}.
For two invariant closed curves~$c_i$ and~$c_j$, a {\it bridge} between~$c_i$ and~$c_j$ is a curve of the form
$$r_i\cup(\tau\ell)^{-1}\cup r_j\cup\ell,$$
where:
\begin{enumerate}[$\bullet$]
\item
$\ell$ is a simple path in~$P_1$ starting on~$c_j$ and ending on~$c_i$.
\item
$r_i$ is the path along~$c_i$ from the end point of~$\ell$ to the end point of~$\tau\ell$.
(If $c_i$ is an oval then the path $r_i$ consists of one point.)
\item
$r_j$ is the path along~$c_j$ from the starting point of~$\tau\ell$ to the starting point of~$\ell$.
(If $c_j$ is an oval then the path $r_j$ consists of one point.)
\end{enumerate}
Figure~\ref{fig-bridges} shows the shapes of the bridges for different types of invariant curves.
The bridges are shown in bold.
The bold arrows on the bold lines show the direction of the bridges,
while the thinner arrows near the lines show the directions of the paths $c_i$, $c_j$, $r_i$, $r_j$, $\ell$ and $\tau\ell$. 
\end{mydef}

\begin{mydef}
\label{def-symm-gen}
Let $(P,\tau)$ be a Klein surface with a decomposition in two halves $P_1$ and $P_2$ as in Theorem~\ref{thm-gen}.
A {\it symmetric generating set}
%of type~$(\tilde g,k,n)$ 
of~$\piorb(P)$ is a generating set of the form
$$
  (
   a_1,b_1,\dots,a_{\tilde g},b_{\tilde g},
   a_1',b_1',\dots,a_{\tilde g}',b_{\tilde g}',
   c_1,\dots,c_{n-1},d_1,\dots,d_{n-1}
  ),
$$
where
\begin{enumerate}[$\bullet$]
\item
$(a_1,b_1,\dots,a_{\tilde g},b_{\tilde g},c_1,\dots,c_n)$ is a generating set of $\piorb(P_1)$ as in Theorem~\ref{thm-gen},
\item
$a_i'=(\tau a_i)^{-1}$ and $b_i'=(\tau b_i)^{-1}$ for $i=1,\dots,\tilde g$.
\item
$d_1,\dots,d_{n-1}$ are closed curves which only intersect at the base point,
such that $d_i$ is homotopic to a bridge between $c_i$ and $c_n$,
\end{enumerate}
Note that $\tau c_i=c_i$ and $\tau d_i=c_i^{|c_i|}d_i^{-1}c_n^{|c_n|}$,
where $|c_j|=0$ if $c_j$ is an oval and $|c_j|=1$ if $c_j$ is a twist.
\end{mydef}

\begin{rem}
\label{standard-symmetric}
Note that a symmetric generating set is not a standard generating set in the sense of Definition~\ref{def-standard-basis},
however it is free homotopic to a standard one.
\end{rem}

\subsection{Topological Invariants of Higher Arf Functions}

\label{subsec-topinv-arf}

\myskip
In this section we summarize the results of~\cite{NP:2005, NP:2009} on topological invariants of higher Arf functions.

\begin{mydef}
\label{def-arf-inv}
Let $\arf:\piorbO(P)\to\z/m\z$ be an $m$-Arf function.
For $g=1$ we define the {\it Arf invariant} $\de=\de(P,\arf)$ as
$$\de=\gcd(m,\arf(a_1),\arf(b_1),\arf(c_1)+1,\dots,\arf(c_n)+1),$$
where
$$\{a_1,b_1,c_i~(i=1,\dots,n)\}$$
is a standard (or symmetric) generating set of the fundamental group $\piorb(P)$.
For $g\ge2$ and even $m$ we define the {\it Arf invariant} $\de=\de(P,\arf)$ as $\de=0$
if there is a standard (or symmetric) generating set
$$\{a_i,b_i~(i=1,\dots,g),c_i~(i=1,\dots,n)\}$$
of the fundamental group $\piorb(P)$ such that
$$\sum\limits_{i=1}^g(1-\arf(a_i))(1-\arf(b_i))\equiv0~(\mod2)$$
and as $\de=1$ otherwise.
For~$g\ge2$ and odd $m$ we set $\de=0$.
\end{mydef}

\begin{mydef}
For even~$m$ and $g\ge2$ we say that an $m$-Arf function with the Arf invariant~$\de$ is {\it even} if $\de=0$ and {\it odd} if $\de=1$.
\end{mydef}

\begin{rem}
For even~$m$, $g\ge2$ and $\de=0$, 
it is not true that the expression
$$\sum\limits_{i=1}^g(1-\arf(a_i))(1-\arf(b_i))$$
is even for any standard generating set.
This expression only needs to be even for one generating set
and can in fact have different parity for different generating sets.
On the other hand we will see that the parity of this expression
does not depend on the choice of the generating set
if $\arf(c_1),\dots,\arf(c_n)$ are all odd (Theorem~\ref{thm-arf-construction})
or if $P$ is compact (Proposition~\ref{prop-arf-compact}).
\end{rem}

\begin{rem}
The Arf invariant~$\de$ is a topological invariant of an Arf function~$\arf$,
i.e.\ it does not change under self-homeomorphisms of the Riemann surface~$P$.
\end{rem}

%\begin{mydef}
%Let $P$ be a hyperbolic Riemann surface of genus~$g$ (with holes).
%Let $\arf$ be an $m$-Arf function on~$P$.
%The {\it topological type\/} of~$\arf$ is a tuple $(g,\de,k_0,\dots,k_{m-1})$,
%where $\de$ is the Arf invariant of~$\arf$ and $k_j$ is the number of closed curves around the holes
%with value of~$\arf$ equal to~$j$. 
%\end{mydef}

The following is a special case of our earlier classification result, Theorem~5.3 in~\cite{NP:2009}:

\begin{thm}
\label{thm-arf-existence}
Let $P$ be a hyperbolic Riemann surface of genus~$g$ with $n$~holes.
Let $c_1,\dots,c_n$ be closed curves around the holes as in Definition~\ref{def-standard-basis}.
Let $\arf$ be an $m$-Arf function on~$P$ and let $\de$ be the $m$-Arf invariant of~$\arf$.
Then 
\begin{enumerate}[(a)]
\item
If $g\ge2$ and $m\equiv1~(\mod2)$ then $\de=0$.
\item
If $g\ge2$ and $m\equiv0~(\mod2)$ and $\arf(c_i)\equiv0~(\mod2)$ for some $i$ then $\de=0$.
\item
If $g=1$ then $\de$ is a divisor of~$\gcd(m,\arf(c_1)+1,\dots,\arf(c_n)+1)$.
\item
$\arf(c_1)+\cdots+\arf(c_n)\equiv(2-2g)-n~(\mod m)$.
\end{enumerate}
\end{thm}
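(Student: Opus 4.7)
Parts (a) and (c) are immediate from Definition~\ref{def-arf-inv}: for $g\ge 2$ and odd $m$ the definition directly sets $\de=0$; for $g=1$ the definition assigns
\[
\de=\gcd(m,\arf(a_1),\arf(b_1),\arf(c_1)+1,\dots,\arf(c_n)+1),
\]
which is plainly a divisor of $\gcd(m,\arf(c_1)+1,\dots,\arf(c_n)+1)$.

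For part (d), the plan is to work inside $\Pm$. By Theorem~\ref{thm-corresp-lifts-arf}, the Arf function $\arf$ arises from a unique lift $\Ga^*\subset\Pm$ in which each generator of $\Ga$ has a distinguished preimage whose level equals its Arf value. From the cell decomposition of Definition~\ref{def-standard-basis} --- the standard system cuts $P$ into $n$ annuli plus one disc whose boundary reads $\tilde a_1\tilde b_1\tilde a_1^{-1}\tilde b_1^{-1}\cdots\tilde a_g\tilde b_g\tilde a_g^{-1}\tilde b_g^{-1}\tilde c_1\cdots\tilde c_n$ --- the relation
\[
a_1b_1a_1^{-1}b_1^{-1}\cdots a_gb_ga_g^{-1}b_g^{-1}\cdot c_1\cdots c_n = e
\]
holds in $\piorb(P)\cong\Ga$. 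The product of the distinguished lifts in $\Pm$ is therefore $e_{\Pm}$ and in particular has level $0$. I would then compute this level piece by piece, using property~3 of the Arf function (for a symplectic pair $\arf(a_ib_i)=\arf(a_i)+\arf(b_i)$) inside each commutator block, property~4 (the $-1$ correction for disjoint pairs arranged as in Figure~\ref{fig-neg-pair}) for the juxtapositions between consecutive commutator blocks and between consecutive $c_j$, together with the base-case observation extracted from a one-handle, one-hole surface that each single commutator contributes $\levm([A_i,B_i])\equiv 1\pmod m$. Summing the $g$ commutator contributions, the hole contributions $\sum\arf(c_j)$, and the accumulated $-1$ corrections and equating to $0\pmod m$ then yields $\sum\arf(c_j)\equiv(2-2g)-n\pmod m$. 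The principal obstacle is the careful bookkeeping of the property-4 corrections arising at each juxtaposition in the standard figure-eight configuration.

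For part (b), assume $\arf(c_{i_0})$ is even for some $i_0$; after cyclic relabeling we may take $i_0=1$. The goal is to exhibit a standard generating set for which $\sum_{i=1}^{g}(1-\arf(a_i))(1-\arf(b_i))\equiv 0\pmod 2$. A direct mod-$2$ check using property~3 of the Arf function shows that Nielsen-type moves within a single handle preserve the parity of that handle's contribution, so such moves alone cannot flip the total parity. The plan is instead to apply a handle-hole exchange move, modifying $(a_1,b_1)$ by a suitable conjugation or multiplication involving $c_1$ so as to transfer parity between the handle and the hole. Because $\arf(c_1)$ is even, the admissible shifts in $\arf(a_1)$ and $\arf(b_1)$ have controlled parity, and this extra flexibility --- absent when all $\arf(c_j)$ are odd --- suffices to reset $\sum(1-\arf(a_i))(1-\arf(b_i))$ to be even whenever it starts out odd. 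I expect the main obstacle here to be verifying that the modified tuple is still a standard generating set in the sense of Definition~\ref{def-standard-basis} (i.e.\ satisfies the disk-bounding relation used in part~(d)) and tracking exactly how each Arf value transforms under the exchange, so as to confirm that the resulting sum has the required parity.
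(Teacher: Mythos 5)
Your parts (a) and (c) are correct and complete: they really are immediate from Definition~\ref{def-arf-inv}. For (b) and (d) your strategy is the right one --- it is essentially the argument of Theorem~5.3 of \cite{NP:2009}, which is all the present paper offers by way of proof (the theorem is quoted from that reference, not reproved here). In particular the mechanism you propose for (b) is sound: if $\sum(1-\arf(a_i))(1-\arf(b_i))$ is odd, then some handle has both $\arf(a_i)$ and $\arf(b_i)$ even; sliding $b_i$ over the hole $c_{i_0}$ with $\arf(c_{i_0})$ even replaces $\arf(b_i)$ by $\arf(b_i)+\arf(c_{i_0})-1$, which is odd exactly because $\arf(c_{i_0})$ is even, so that handle's contribution becomes $0$ and the total parity flips. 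Your worry about the modified tuple being standard is best discharged by realising the slide as a homeomorphism of $P$ (a composition of Dehn twists) and taking the image of the whole standard system, which is automatically standard; one then evaluates the fixed $\arf$ on the new curves via properties 1--4.

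Part (d) as sketched has two genuine gaps. First, the base case $\levm([A_i,B_i])\equiv1$ cannot be ``extracted from a one-handle, one-hole surface'': the one-holed torus instance of (d) is precisely the statement $\arf(c_1)\equiv-1$, i.e.\ it \emph{is} the base case, so as written your argument is circular. This lemma must be proved by a direct computation in $\Pm$ (a rotation-number argument using $R_x(2\pi)=U$ and the definition of the level function), and it is the real content of (d). Second, the arithmetic does not come out as you describe: taking $g$ commutator blocks each at level $+1$, the hole contributions $\sum\arf(c_j)$, and a property-4 correction of $-1$ at every juxtaposition, and equating the level of the relator to $0$, yields $\sum\arf(c_j)\equiv n-2$, not $(2-2g)-n$; these differ by $4-2g-2n$, which is nonzero mod $m$ in general. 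The corrections attached to consecutive blocks in the configuration of Figure~\ref{fig-basis} are not all of the type in Figure~\ref{fig-neg-pair}; their signs depend on the cyclic order of the outgoing and incoming strands at $p$, and it is exactly this sign pattern that produces the $-2g$ term. So the bookkeeping is not merely tedious --- done naively it gives the wrong constant --- and you need the precise level formula for sequential configurations from \cite{NP:2009} (or an equivalently careful case analysis of which juxtapositions contribute $+1$ and which $-1$) to close the argument.
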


%\begin{thm}
%\label{thm-arf-existence}
%A tuple
%$$t=(g,\de,n_0,n_1,\dots,n_{m-1})\quad\text{with}\quad n_0+n_1+\cdots+n_{m-1}=n$$
%is the type of a hyperbolic $m$-Arf function on a Riemann surface of genus~$g$ with $n$~holes
%if and only if it has the following properties:
%\begin{enumerate}[(a)]
%\item
%If $g>1$ and $m$ is odd then $\de=0$.
%\item
%If $g>1$ and $m$ is even and $n_j\ne0$ for some even~$j\in\z_m$ then $\de=0$.
%\item
%If $g=1$ then $\de$ is a divisor of~$m$ and~$\gcd(\{j+1\st n_j\ne0\})$.
%\item
%$$\sum\limits_{j\in\z_m}\,j\cdot n_j\equiv(2-2g)-n~(\mod m),$$
%i.e.\ the sum of values of an $m$-Arf function on the closed curves around the holes
%is equal modulo~$m$ to $2-2g$ minus the number of holes.
%\end{enumerate}
%\end{thm}

The following result describes the construction of such $m$-Arf functions.
% It was not explicitely stated in~\cite{NP:2009}.
It follows from Lemma ~3.7, Lemma 3.9, Theorem~4.9 and the proof of Theorem~5.3 in~\cite{NP:2009}.

\begin{thm}
\label{thm-arf-construction}
Let $P$ be a hyperbolic Riemann surface of genus~$g$ with $n$~holes. 
Then for any standard generating set
$$(a_1,b_1,\dots,a_g,b_g,c_1,\dots,c_n)$$
of $\piorb(P)$ and any choice of values $\al_1,\be_1,\dots,\al_g,\be_g,\ga_1,\dots,\ga_n$ in $\z/m\z$ with 
$$\ga_1+\cdots+\ga_n\equiv(2-2g)-n~(\mod m),$$
there exists an $m$-Arf function~$\arf$ on~$P$
such that $\arf(a_i)=\al_i$, $\arf(b_i)=\be_i$ for $i=1,\dots,g$ and $\arf(c_i)=\ga_i$ if $i=1,\dots,n$. 
The Arf invariant~$\de$ of this $m$-Arf function~$\arf$ satisfies the following conditions:
\begin{enumerate}[(a)]
\item
If $g\ge2$ and $m\equiv1~(\mod2)$ then $\de=0$.
\item
If $g\ge2$ and $m\equiv0~(\mod 2)$ and $\ga_i\equiv0~(\mod2)$ for some $i$ then $\de=0$.
\item
\label{part-c}
If $g\ge2$ and $m\equiv0~(\mod2)$ and $\ga_1\equiv\cdots\equiv\ga_n\equiv1~(\mod2)$ then $\de\in\{0,1\}$ and
$$\de\equiv\sum\limits_{i=1}^g\,(1-\al_i)(1-\be_i)~(\mod2).$$
\item
If $g=1$ then $\de=\gcd(m,\al_1,\be_1,\ga_1+1,\dots,\ga_n+1)$.
\end{enumerate}
\end{thm}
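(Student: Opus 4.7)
The plan is to split the theorem into two halves, existence of the $m$-Arf function with the prescribed data and computation of its Arf invariant, and to assemble each half from ingredients already available in the cited earlier work. Existence will be handled through the lift correspondence of Theorem~\ref{thm-corresp-lifts-arf}, while the invariant computation will follow from Definition~\ref{def-arf-inv} together with an analysis of how the parity of $\sum_{i=1}^g (1-\al_i)(1-\be_i)$ transforms under changes of standard generating set.

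For existence, I would first choose preimages $A_i, B_i \in \Pm$ of $a_i, b_i$ with $\levm(A_i) = \al_i$, $\levm(B_i) = \be_i$, and $C_j \in \Pm$ of $c_j$ with $\levm(C_j) = \ga_j$; such preimages exist because $\pi \colon \Pm \to G^+$ is an $m$-fold cover and $\levm$ distinguishes the $m$ elements in each fibre. The group~$\Ga$ has the single defining relation $\prod_{i=1}^g [a_i,b_i] \cdot \prod_{j=1}^n c_j = e$, so to produce a lift of~$\Ga$ it suffices to check that this relation lifts to an identity in~$\Pm$ when $A_i, B_i, C_j$ are substituted. The lifted product projects to~$e$ in~$G^+$, so it must equal a power~$U^k$ of the central generator; a level computation that uses conjugation-invariance of $\levm$ (Proposition~\ref{prop-conj}) together with the one-holed-torus identity $\levm([A,B]) = -1$ will show that $k \equiv 0 \pmod m$ is exactly the compatibility hypothesis $\sum_j \ga_j \equiv (2-2g)-n \pmod m$. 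Then Theorem~\ref{thm-corresp-lifts-arf} converts this lift into the required $m$-Arf function.

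Parts~(a) and~(d) are immediate from Definition~\ref{def-arf-inv}. For~(c), since $(a_i, b_i, c_j)$ is itself a standard generating set, the definition gives $\de = 0$ exactly when $\sum_{i=1}^g (1-\al_i)(1-\be_i)$ is even on this set; the substantive input is that this parity is preserved under every change of standard generating set when $\ga_1,\dots,\ga_n$ are all odd, which is the content of Theorem~4.9 of~\cite{NP:2009}. The hard part will be~(b): when some $\ga_{j_0}$ is even I must exhibit a standard generating set on which $\sum_{i=1}^g(1-\al_i)(1-\be_i)$ is even, so that $\de = 0$ follows from Definition~\ref{def-arf-inv}. The natural candidate is a handle-slide of $a_1$ or $b_1$ across the boundary loop $c_{j_0}$, possibly composed with Dehn twists, and the task is to verify using properties~3 and~4 of Definition~\ref{def-m-arf} that the Arf values on the new generators differ from the old ones by quantities whose contribution to $\sum (1-\al_i)(1-\be_i) \pmod 2$ can be made to flip precisely when $\ga_{j_0}$ is even. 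This transformation analysis is exactly what Lemmas~3.7 and~3.9 of~\cite{NP:2009} package, so invoking them will close the argument.
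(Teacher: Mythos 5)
Your outline is sound and matches the paper's treatment: the paper gives no self-contained proof of this theorem, deferring entirely to Lemmas~3.7 and~3.9, Theorem~4.9 and the proof of Theorem~5.3 of~\cite{NP:2009}, which are exactly the ingredients (a lift of~$\Ga$ with prescribed levels on a standard generating set subject to the single relation, plus the behaviour of the parity of $\sum_{i}(1-\al_i)(1-\be_i)$ under changes of standard generating set) that your sketch assembles in the same way. The only detail to watch is the sign in your handle identity --- with the paper's conventions the relation $[a_1,b_1]c_1=e$ on a one-holed torus together with $\arf(c_1)=-1$ gives $\levm([A,B])=+1$ rather than $-1$ --- but this does not affect the structure of the argument.
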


%\begin{thm}
%\label{thm-arf-construction}
%Let $P$ be a hyperbolic Riemann surface of genus~$g$ with $n$~holes. 
%Consider $(n_0,n_1,\dots,n_{m-1})$ with
%$$n_0+n_1+\cdots+n_{m-1}=n\quad\text{and}\quad\sum\limits_{j\in\z_m}\,j\cdot n_j\equiv(2-2g)-n~(\mod m).$$
%Then for any standard generating set
%$$(a_1,b_1,\dots,a_g,b_g,c_1,\dots,c_n)$$
%of $\piorb(P)$,
%any choice of values $\al_1,\be_1,\dots,\al_g,\be_g\in\z/m\z$
%and any partition of $\{1,\dots,n\}$ into $m$ disjoint subsets $A_j$, $j\in\z/m\z$, with $|A_j|=n_j$,
%there exists an $m$-Arf function~$\arf$ on~$P$ of type
%$$(g,\de,n_0,n_1,\dots,n_{m-1})$$
%such that $\arf(a_i)=\al_i$, $\arf(b_i)=\be_i$ for $i=1,\dots,g$
%and $\arf(c_i)=j$ if $i\in A_j$. 
%The Arf invariant~$\de$ of this $m$-Arf function~$\arf$ satisfies the following conditions:
%\begin{enumerate}[(a)]
%\item
%If $g>1$ and $m$ is odd then $\de=0$.
%\item
%If $g>1$ and $m$ is even and $n_j\ne0$ for some even~$j\in\z_m$ then $\de=0$.
%\item
%If $g>1$ and $m$ is even and $n_j=0$ for all even~$j\in\z_m$ then $\de\in\{0,1\}$ and
%$$\de\equiv\sum\limits_{i=1}^g\,(1-\al_i)(1-\be_i)~(\mod2).$$
%\item
%If $g=1$ then $\de=\gcd(m,\al_1,\be_1,\{j+1\st n_j\ne0\})$.
%\end{enumerate}
%\end{thm}

%\begin{rem}
%Note that in the case $g\ge2$ the formula for~$\de$ in part~(\ref{part-c})
%holds on some standard generating set by definition.
%The important statement is that, in the case that $m\equiv0~(\mod2)$ and $\ga_1\equiv\cdots\equiv\ga_n\equiv1~(\mod2)$,
%this formula holds for any standard generating set.
%\end{rem}

In the special case of a compact Riemann surface without holes (i.e.\ $n=0$) we have

\begin{prop}
\label{prop-arf-compact}
Let $P$ be a compact Riemann surface of genus~$g\ge2$. 
% g=1 is impossible for n=0 as the surface would not be hyperbolic.
Assume that $2-2g\equiv0~(\mod m)$.
Then for any standard generating set $(a_1,b_1,\dots,a_g,b_g)$ of $\piorb(P)$
and any choice of values $\al_1,\be_1,\dots,\al_g,\be_g$ in $\z/m\z$,
there exists an $m$-Arf function~$\arf$ on~$P$ such that $\arf(a_i)=\al_i$, $\arf(b_i)=\be_i$ for $i=1,\dots,g$.
The Arf invariant~$\de$ of this $m$-Arf function~$\arf$ satisfies the following conditions:
\begin{enumerate}[(a)]
\item
If $m$ is odd then $\de=0$.
\item
If $m$ is even then $\de\in\{0,1\}$ and
$$\de\equiv\sum\limits_{i=1}^g\,(1-\al_i)(1-\be_i)~(\mod2).$$
\end{enumerate}
\end{prop}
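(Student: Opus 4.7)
The plan is to reduce to Theorem~\ref{thm-arf-construction} applied to a surface with one hole. Fix a point $p_0\in P$ disjoint from simple closed representatives of the generating set $(a_1,b_1,\dots,a_g,b_g)$, and let $P'=P\setminus D$ where $D$ is a small open disk about~$p_0$. Then $P'$ is a hyperbolic Riemann surface of genus~$g$ with one hole, and the tuple $(a_1,b_1,\dots,a_g,b_g,c_1)$ is a standard generating set of $\piorb(P')$, where the peripheral loop $c_1$ is freely homotopic to $\prod_{i=1}^g[a_i,b_i]$.

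Next I would apply Theorem~\ref{thm-arf-construction} to~$P'$. Under the hypothesis $2-2g\equiv0~(\mod m)$ the required congruence $\ga_1\equiv(2-2g)-1~(\mod m)$ becomes $\ga_1\equiv-1~(\mod m)$. Choosing $\ga_1=m-1$ together with the prescribed values $\al_i,\be_i\in\z/m\z$ produces an $m$-Arf function~$\arf'$ on~$P'$ with $\arf'(a_i)=\al_i$, $\arf'(b_i)=\be_i$, and $\arf'(c_1)=m-1$.

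The next step is to show that $\arf'$ descends to an $m$-Arf function~$\arf$ on~$P$. Every class in $\piorbO(P)$ is represented by a simple closed curve disjoint from~$D$ and hence determines a class in $\piorbO(P')$; conversely, two simple closed curves on~$P'$ that are isotopic on~$P$ are already isotopic on~$P'$ (the ambient isotopy can be arranged to fix a neighbourhood of~$p_0$), so $\arf(c):=\arf'(c)$ is well-defined on $\piorbO(P)$. The four axioms of Definition~\ref{def-m-arf} transfer directly from~$P'$ to~$P$, since each concerns only a small configuration of curves realisable away from~$D$.

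Finally, the Arf invariant is read off from Theorem~\ref{thm-arf-construction}. Because Definition~\ref{def-arf-inv} of~$\de$ for $g\ge2$ involves only the values $\arf(a_i)$ and $\arf(b_i)$, the Arf invariants of~$\arf$ and~$\arf'$ coincide. For odd~$m$, part~(a) of the theorem yields $\de=0$, giving~(a) of the proposition. For even~$m$, the peripheral value $\ga_1=m-1$ is odd, so part~(c) of the theorem applies and gives $\de\in\{0,1\}$ with $\de\equiv\sum_{i=1}^g(1-\al_i)(1-\be_i)~(\mod2)$, establishing~(b). The main obstacle is the descent step: one must verify that the resulting function on $\piorbO(P)$ is independent of the auxiliary choices of the disk~$D$ and of simple representatives for each free homotopy class, and that the four Arf axioms really do transfer from~$P'$ to~$P$.
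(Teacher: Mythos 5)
Your overall strategy (excise a disc around a point $p_0$, apply Theorem~\ref{thm-arf-construction} to the resulting genus-$g$ surface $P'$ with one hole and boundary value $\ga_1\equiv-1~(\mod m)$, then fill the disc back in) is a legitimate route, and your reading of the Arf invariant is correct: for even $m$ the value $\ga_1=m-1$ is odd, so part~(\ref{part-c}) of Theorem~\ref{thm-arf-construction} applies and gives the stated parity formula. For comparison, the paper treats the proposition as the $n=0$ specialisation of the machinery imported from \cite{NP:2009}; the more direct argument there runs through Theorem~\ref{thm-corresp-lifts-arf}: choose lifts $A_i,B_i\in\Pm$ of the hyperbolic generators at the prescribed levels, note that $\prod_{i=1}^{g}[A_i,B_i]$ does not depend on the choice of lifts (the central ambiguities cancel in commutators) and equals $U^{2-2g}$, so the chosen elements generate a lift of the surface group exactly when $2-2g\equiv0~(\mod m)$. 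That route requires no descent argument at all.

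The genuine gap in your write-up is the descent step, and the justification you offer for it is false. The inclusion $P'\hookrightarrow P$ is not $\pi_1$-injective (the peripheral class $c_1$ dies in $\piorb(P)$), and correspondingly two simple closed curves in $P'$ that are isotopic in $P$ need not be isotopic in $P'$: point-pushing homeomorphisms and the Dehn twist about the boundary of $P'$ are isotopic to the identity in $P$ but act nontrivially on isotopy classes of curves in $P'$ (Birman exact sequence). So ``the ambient isotopy can be arranged to fix a neighbourhood of $p_0$'' is precisely what cannot be arranged in general; after correcting an ambient isotopy of $P$ so that it fixes $p_0$, it carries the first curve to a point-pushed copy of the second. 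Consequently $\arf(c):=\arf'(c)$ is not automatically well defined: a class in $\piorbO(P)$ has several preimages in $\piorbO(P')$, differing by insertions of $c_1^{\pm1}$, and you must show that $\arf'$ takes the same value on all of them. This is exactly where the boundary value enters: sliding a simple closed curve across the filled-in disc multiplies it by $c_1^{\pm1}$ in the configuration of Property~\ref{arf-prop-neg-seqset} of Definition~\ref{def-m-arf}, so the value of $\arf'$ changes by $\arf'(c_1^{\pm1})-1$ for the sign dictated by that configuration, and this vanishes precisely because $\arf'(c_1)\equiv-1~(\mod m)$. You flag this as ``the main obstacle'' but do not carry it out, and one also needs a generation statement reducing an arbitrary pair of $P$-isotopic simple closed curves in $P'$ to a chain of such elementary slides. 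Without these two points the construction of $\arf$ on $P$ is incomplete.
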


\subsection{Values of Real Arf Functions on Symmetric Generating Sets}

%\subsection{Checking that an Arf Function is Real}
%\subsection{Arf Functions on Bridges}

\label{subsec-checking-real}

\begin{lem}
\label{lem-bridges}
Let $(P,\tau)$ be a Klein surface and let $\arf$ be an $m$-Arf function on~$P$.
(Here we do not assume that $\arf$ is a real Arf function.)
Let $d$ be a bridge (Definition~\ref{def-bridge}).
\begin{enumerate}[$\bullet$]
\item
Let $(P,\tau)$ be separating. 
Then 
$$\arf(\tau d)=-\arf(d).$$
\item
Let $(P,\tau)$ be non-separating.
Let $c_1,\dots,c_n$ be invariant curves
such that the first $k$ correspond to ovals and the next $n-k$ correspond to twists
(see Theorem~\ref{thm-gen}).
Assume that $\arf$ vanishes on all twists $c_{k+1},\dots,c_n$.
Then
$$\arf(\tau d)=-\arf(d).$$
\end{enumerate}
\end{lem}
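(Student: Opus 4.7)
Plan: In the \emph{separating case} the argument is direct. Since $(P,\tau)$ is separating there are no twists, so both $c_i$ and $c_j$ are ovals, and the paths $r_i,r_j$ in the definition of the bridge are trivial (each is a single point). The bridge therefore reduces to $d=(\tau\ell)^{-1}\cdot\ell$, and applying $\tau$ yields
\[
\tau d = \tau((\tau\ell)^{-1})\cdot\tau\ell = \ell^{-1}\cdot\tau\ell = d^{-1}.
\]
Property~2 of an $m$-Arf function (Definition~\ref{def-m-arf}) then immediately gives $\arf(\tau d)=\arf(d^{-1})=-\arf(d)$.

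For the \emph{non-separating case} the key algebraic input is the identity
\[
\tau d = c_i^{|c_i|}\cdot d^{-1}\cdot c_j^{|c_j|}\quad\text{in }\pi_1(P),
\]
where $|c|=0$ for an oval and $|c|=1$ for a twist. This identity is derived by direct computation from $d=r_i\cdot(\tau\ell)^{-1}\cdot r_j\cdot\ell$, using that $\tau r=r$ when $r$ is a point on an oval, while $\tau r\cdot r$ is a full copy of the twist $c$ when $r$ is a half-loop on a twist (compare also the formula in Definition~\ref{def-symm-gen}). I would then handle the cases according to whether $c_i$ and $c_j$ are ovals or twists. If both are ovals, the identity reduces to $\tau d=d^{-1}$ and I finish as in the separating case. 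If exactly one of $c_i,c_j$ is a twist, say $c_j$ (the symmetric case being analogous), the identity becomes $\tau d = d^{-1}\cdot c_j$, hence
\[
d\cdot\tau d = c_j \quad\text{in }\pi_1(P).
\]
Since $c_j$ is a simple closed curve, the product $d\cdot\tau d$ is simple as well, and by a standard surface-topology fact $d$ and $\tau d$ admit representatives that intersect transversally at a single point with nonzero intersection number. Arf axiom~3 then gives $\arf(d\cdot\tau d)=\arf(d)+\arf(\tau d)$, and combined with $\arf(c_j)=0$ (by assumption) this yields $\arf(\tau d)=-\arf(d)$.

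If both $c_i$ and $c_j$ are twists, the above strategy must be iterated. I would set $\gamma := d\cdot c_i^{-1}$. Geometrically the bridge $d$ crosses the twist $c_i$ transversally at exactly one point---at the transition where the bridge moves from one side of $c_i$ to the other---so Arf axiom~3 applies to show that $\gamma$ is represented by a simple closed curve and $\arf(\gamma)=\arf(d)-\arf(c_i)=\arf(d)$, using $\arf(c_i)=0$. A direct computation then gives
\[
\gamma\cdot\tau d = d\cdot c_i^{-1}\cdot c_i\cdot d^{-1}\cdot c_j = c_j,
\]
which is again a simple closed curve, so by the same reasoning $\arf(\gamma\cdot\tau d)=\arf(\gamma)+\arf(\tau d)=\arf(c_j)=0$, and I conclude $\arf(\tau d)=-\arf(\gamma)=-\arf(d)$.

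The main technical obstacle is the geometric verification needed to apply Arf axiom~3 in each step, namely that the pairs of simple closed curves involved ($d$ with $\tau d$, $d$ with $c_i$, and $\gamma$ with $\tau d$) can be isotoped to intersect transversally at exactly one point, and moreover in the ``positive'' intersection configuration of axiom~3 rather than the ``negative'' one of axiom~4. The first half relies on the principle that when the product in $\pi_1(P)$ of two classes of simple closed curves is itself representable by a simple closed curve, the factors admit minimal single-point transverse representatives; checking that the sign is correct (so axiom~3 and not axiom~4 applies) in each geometric configuration of bridge versus twist is the principal subtlety of the proof.
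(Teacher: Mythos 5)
Your separating case and the oval--oval case are exactly the paper's argument and are fine: $r_i,r_j$ degenerate to points, $\tau d=d^{-1}$, and property~\ref{arf-prop-inv} finishes. The relation $\tau d=c_i^{|c_i|}\,d^{-1}\,c_j^{|c_j|}$ that you take as the algebraic input is also the one the paper uses (it is recorded in Definition~\ref{def-symm-gen}, and the paper derives it in the form $(\tau d)^{-1}=c_i^{-1}\cup d\cup c_j^{-1}$).

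The gap is in how you extract the Arf identity from that relation when twists are present. You apply property~\ref{arf-prop-cross} to the pairs $(d,\tau d)$ and $(\gamma,\tau d)$, justified by the claim that if the product of two classes of simple closed curves is again representable by a simple closed curve, then the factors admit representatives meeting transversally in a single point with \emph{nonzero} intersection number. That principle is false in general: the configuration of property~\ref{arf-prop-neg-seqset} (Figure~\ref{fig-neg-pair}) is precisely a pair of simple closed curves meeting at one point with intersection number zero whose product is still in $\piorbO(P)$, and there the correct formula is $\arf(ab)=\arf(a)+\arf(b)-1$. So your argument, as written, cannot distinguish whether axiom~3 or axiom~4 applies to $(d,\tau d)$, and in the latter case the conclusion would be off by~$1$. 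You flag this as ``the principal subtlety'' but do not resolve it; moreover $d$ and $\tau d$ as constructed overlap along the whole arcs $\ell$ and $\tau\ell$, so even putting them in general position and counting their intersections is nontrivial. (One can in fact rescue your pairing by computing the homological intersection number $[d]\cdot[\tau d]=[d]\cdot[c_j]=\pm1$, but that computation is exactly what is missing.) The paper sidesteps all of this by never pairing two bridge-like curves: it peels $c_i^{-1}$ and $c_j^{-1}$ off $(\tau d)^{-1}=c_i^{-1}\cup d\cup c_j^{-1}$ one at a time, applying axiom~3 only to a bridge together with one of its end curves --- and a bridge visibly crosses each twist at its ends exactly once transversally (it arrives through $P_1$ via $\ell$ and leaves into $P_2$ via $\tau\ell$), so the hypothesis ``one intersection point, nonzero intersection number'' is immediate there. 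Your first step in the twist--twist case ($\gamma=dc_i^{-1}$) is already this move; carrying it out once more with $c_j^{-1}$ instead of switching to the pair $(\gamma,\tau d)$ closes the gap.
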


\begin{proof}
Let $d=r_i\cup(\tau\ell)^{-1}\cup r_j\cup\ell$ be a bridge between~$c_i$ and~$c_j$.
\begin{enumerate}[$\bullet$]
\item % bridge between ovals
If $c_i$ and $c_j$ are both ovals we have (see Figure~\ref{fig-bridge-oval-oval}):
\begin{align*}
  d&=(\tau\ell)^{-1}\cup\ell,\\
  \tau d&=\ell^{-1}\cup\tau\ell=d^{-1}.
\end{align*}
Now we see that $\arf(\tau d)=\arf(d^{-1})=-\arf(d)$.

% FIGURE
\begin{figure}[h]
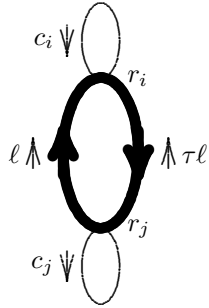

\begin{center}
\leavevmode
  \setcoordinatesystem units <1cm,1cm> point at 0 0
  \setplotarea x from -2 to 2, y from -2 to 2
  \ellipticalarc axes ratio 1:2 360 degrees from 0 1 center at 0 1.5
%  \arrow <10pt> [0.2,0.5] from 0.25 1.3 to 0.25 1.7
  \arrow <10pt> [0.2,0.5] from -0.45 1.7 to -0.45 1.3
  \ellipticalarc axes ratio 1:2 360 degrees from 0 -1 center at 0 -1.5
%  \arrow <10pt> [0.2,0.5] from 0.25 -1.7 to 0.25 -1.3
  \arrow <10pt> [0.2,0.5] from -0.45 -1.3 to -0.45 -1.7
  \arrow <10pt> [0.2,0.5] from -0.9 -0.2 to -0.9 0.2
  \arrow <10pt> [0.2,0.5] from 0.9 -0.2 to 0.9 0.2
  \setplotsymbol({$\bullet$})
  \ellipticalarc axes ratio 1:2 360 degrees from 0 1 center at 0 0
  \arrow <10pt> [0.3,0.8] from -0.5 -0.3 to -0.5 0.3
  \arrow <10pt> [0.3,0.8] from 0.5 0.3 to 0.5 -0.3
  \put {$c_i$} [r] <-5pt,0pt> at -0.45 1.5
  \put {$c_j$} [r] <-5pt,0pt> at -0.45 -1.5
  \put {$r_i$} [l] <10pt,0pt> at 0 1
  \put {$r_j$} [l] <10pt,0pt> at 0 -1
  \put {$\ell$} [r] <-5pt,0pt> at -0.9 0
  \put {$\tau\ell$} [l] <5pt,0pt> at 0.9 0
  \multiput {$\bullet$} at 0 1 /
\end{center}
\caption{A bridge between two ovals $c_i$ and $c_j$}
\label{fig-bridge-oval-oval}
\end{figure}

\item % bridge between an oval and a twist 
If $c_i$ is an oval and $c_j$ is a twist we have (see Figure~\ref{fig-bridge-oval-twist}):
\begin{align*}
  d&=(\tau\ell)^{-1}\cup r_j\cup\ell,\\
  \tau d&=\ell^{-1}\cup\tau r_j\cup\tau\ell,\\
  (\tau d)^{-1}
  &=(\tau\ell)^{-1}\cup(\tau r_j)^{-1}\cup\ell\\
  &=\big((\tau\ell)^{-1}\cup r_j\cup\ell\big)\cup\big(\ell^{-1}\cup r_j^{-1}\cup(\tau r_j)^{-1}\cup\ell\big)\\
  &=d\cup c_j^{-1}.
\end{align*}
Using Property~\ref{arf-prop-cross} of Arf functions we obtain
$$\arf((\tau d)^{-1})=\arf(d)+\arf(c_j^{-1}).$$
Using Property~\ref{arf-prop-inv} of Arf functions we obtain
$$-\arf(\tau d)=\arf(d)-\arf(c_j).$$
Now we see that $\arf(c_j)=0$ implies $\arf(\tau d)=-\arf(d)$.

% FIGURE
\begin{figure}[h]
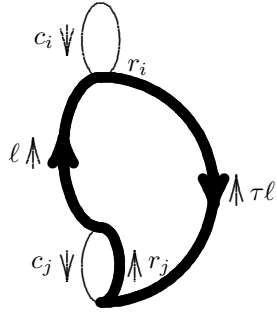

\begin{center}
\leavevmode
  \setcoordinatesystem units <1cm,1cm> point at 0 0
  \setplotarea x from -2 to 2, y from -2 to 2
  \ellipticalarc axes ratio 1:2 360 degrees from 0 1 center at 0 1.5
%  \arrow <10pt> [0.2,0.5] from 0.25 1.3 to 0.25 1.7
  \arrow <10pt> [0.2,0.5] from -0.45 1.7 to -0.45 1.3
  \ellipticalarc axes ratio 1:2 360 degrees from 0 -1 center at 0 -1.5
%  \arrow <10pt> [0.2,0.5] from 0.25 -1.7 to 0.25 -1.3
  \arrow <10pt> [0.2,0.5] from -0.45 -1.3 to -0.45 -1.7
  \arrow <10pt> [0.2,0.5] from -0.9 -0.2 to -0.9 0.2
  \arrow <10pt> [0.2,0.5] from 1.8 -0.7 to 1.8 -0.3
  \arrow <10pt> [0.2,0.5] from 0.45 -1.7 to 0.45 -1.3
  \setplotsymbol({$\bullet$})
  \ellipticalarc axes ratio 1:2 180 degrees from 0 1 center at 0 0
  \ellipticalarc axes ratio 1:1 180 degrees from 0 -2 center at 0 -0.5
  \ellipticalarc axes ratio 1:2 -180 degrees from 0 -1 center at 0 -1.5
  \arrow <10pt> [0.3,0.8] from -0.5 -0.2 to -0.5 0.2
  \arrow <10pt> [0.3,0.8] from 1.5 -0.3 to 1.5 -0.7
  \put {$c_i$} [r] <-5pt,0pt> at -0.45 1.5
  \put {$c_j$} [r] <-5pt,0pt> at -0.45 -1.5
  \put {$r_i$} [l] <10pt,4pt> at 0 1
  \put {$r_j$} [l] <5pt,0pt> at 0.45 -1.5
  \put {$\ell$} [r] <-5pt,0pt> at -0.9 0
  \put {$\tau\ell$} [l] <5pt,0pt> at 1.8 -0.5
  \multiput {$\bullet$} at 0 1 /
\end{center}
\caption{A bridge between an oval $c_i$ and a twist $c_j$}
\label{fig-bridge-oval-twist}
\end{figure}

\item % bridge between twists 
If $c_i$ and $c_j$ are both twists we have (see Figure~\ref{fig-bridge-twist-twist}):
\begin{align*}
  d&=r_i\cup(\tau\ell)^{-1}\cup r_j\cup\ell,\\
  \tau d&=\ell^{-1}\cup\tau r_j\cup\tau\ell\cup\tau r_i
\end{align*}
and
\begin{align*}
  &(\tau d)^{-1}\\
  &=(\tau r_i)^{-1}\cup(\tau\ell)^{-1}\cup(\tau r_j)^{-1}\cup\ell\\
  &=((\tau r_i)^{-1}\cup r_i^{-1})
    \cup(r_i\cup(\tau\ell)^{-1}\cup r_j\cup\ell)
    \cup(\ell^{-1}\cup r_j^{-1}\cup(\tau r_j)^{-1}\cup\ell)\\
  &=c_i^{-1}\cup d\cup c_j^{-1}.
\end{align*}
Using Property~\ref{arf-prop-cross} of Arf functions we obtain
$$\arf((\tau d)^{-1})=\arf(c_i^{-1})+\arf(d)+\arf(c_j^{-1}).$$
Using Property~\ref{arf-prop-inv} of Arf functions we obtain
$$-\arf(\tau d)=\arf(d)-\arf(c_i)-\arf(c_j).$$
Now we see that $\arf(c_i)=\arf(c_j)=0$ implies $\arf(\tau d)=-\arf(d)$.

% FIGURE
\begin{figure}[h]
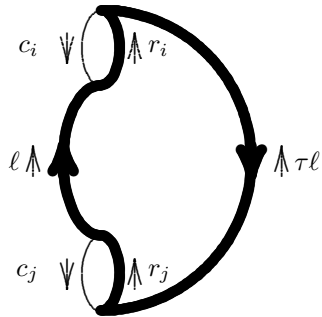

\begin{center}
\leavevmode
  \setcoordinatesystem units <1cm,1cm> point at 0 0
  \setplotarea x from -2 to 2, y from -2 to 2
  \ellipticalarc axes ratio 1:2 360 degrees from 0 1 center at 0 1.5
  \ellipticalarc axes ratio 1:2 360 degrees from 0 -1 center at 0 -1.5
%  \arrow <10pt> [0.2,0.5] from 0.25 1.3 to 0.25 1.7
  \arrow <10pt> [0.2,0.5] from -0.45 1.7 to -0.45 1.3
%  \arrow <10pt> [0.2,0.5] from 0.25 -1.7 to 0.25 -1.3
  \arrow <10pt> [0.2,0.5] from -0.45 -1.3 to -0.45 -1.7
  \arrow <10pt> [0.2,0.5] from -0.9 -0.2 to -0.9 0.2
  \arrow <10pt> [0.2,0.5] from 2.4 -0.2 to 2.4 0.2
  \arrow <10pt> [0.2,0.5] from 0.45 -1.7 to 0.45 -1.3
  \arrow <10pt> [0.2,0.5] from 0.45 1.3 to 0.45 1.7
  \setplotsymbol({$\bullet$})
  \ellipticalarc axes ratio 1:2 180 degrees from 0 1 center at 0 0
  \ellipticalarc axes ratio 1:1 180 degrees from 0 -2 center at 0 0
  \ellipticalarc axes ratio 1:2 180 degrees from 0 1 center at 0 1.5
  \ellipticalarc axes ratio 1:2 -180 degrees from 0 -1 center at 0 -1.5
  \arrow <10pt> [0.3,0.8] from -0.5 -0.2 to -0.5 0.2
  \arrow <10pt> [0.3,0.8] from 2 0.2 to 2 -0.2
%  \arrow <10pt> [0.3,0.8] from 0.25 1.3 to 0.25 1.7
%  \arrow <10pt> [0.3,0.8] from 0.25 -1.7 to 0.25 -1.3
  \put {$c_i$} [r] <-5pt,0pt> at -0.65 1.5
  \put {$c_j$} [r] <-5pt,0pt> at -0.65 -1.5
  \put {$\ell$} [r] <-5pt,0pt> at -0.9 0
  \put {$\tau\ell$} [l] <5pt,0pt> at 2.4 0
  \put {$r_i$} [l] <5pt,0pt> at 0.45 1.5
  \put {$r_j$} [l] <5pt,0pt> at 0.45 -1.5
  \multiput {$\bullet$} at 0 1 /
\end{center}
\caption{A bridge between two twists $c_i$ and $c_j$}
\label{fig-bridge-twist-twist}
\end{figure}
\end{enumerate}
\end{proof}

\begin{lem}
\label{lem-checking-real}
Let $(P,\tau)$ be a Klein surface of type $(g,k,\ve)$ and $\arf$ an $m$-Arf function on~$P$.
Let $c_1,\dots,c_n$ be invariant curves as in Theorem~\ref{thm-gen},
with $c_1,\dots,c_k$ corresponding to ovals and $c_{k+1},\dots,c_n$ corresponding to twists.
Let
$$
  \calB
  =
  (
    a_1,b_1,\dots,a_{\tilde g},b_{\tilde g},a_1',b_1',\dots,a_{\tilde g}',b_{\tilde g}',
    c_1,\dots,c_{n-1},d_1,\dots,d_{n-1}
  )
$$
be a symmetric generating set of~$\piorb(P)$ (Definition~\ref{def-symm-gen}).
Assume that
\begin{enumerate}[(i)]
\item
$\arf(a_i)\equiv\arf(a_i')~(\mod m)$, $\arf(b_i)\equiv\arf(b_i')~(\mod m)$ for $i=1,\dots,\tilde g$,
\item
$2\arf(c_i)\equiv0~(\mod m)$ for $i=1,\dots,n-1$. 
\item
$\arf(c_i)\equiv0~(\mod m)$ for $i=k+1,\dots,n$.
\end{enumerate}
Then $\arf$ is a real $m$-Arf function on $(P,\tau)$.
\end{lem}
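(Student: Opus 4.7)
The approach is to recover $\arf$ as the $m$-Arf function associated to a lift of the real Fuchsian group $\hGa$ corresponding to $(P,\tau)$, and then to invoke Theorem~\ref{thm-lifts-to-Arf-functions}. By Theorem~\ref{thm-corresp-lifts-arf} there is a unique lift $\Ga^*\subset\Pm$ of the Fuchsian group $\Ga=\hGa^+$ whose induced $m$-Arf function is~$\arf$; it therefore suffices to prove that $\Ga^*$ extends to a lift $\hGa^*\subset\Gm$ of~$\hGa$. Following the strategy of the proof of Theorem~\ref{corresp-lifts-arf}, this extension exists provided we can find a pre-image $F\in\Nm$ of some $f\in\hGa^-$ satisfying $F\Ga^*F^{-1}=\Ga^*$ and $F^2\in\Ga^*$.

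Since conjugation by $F$ is a group automorphism of $\Gm$, the condition $F\Ga^*F^{-1}=\Ga^*$ reduces, via Propositions~\ref{prop-induced-inv} and~\ref{Nm-conj}, to checking $\arf(\tau c)=-\arf(c)$ on the elements of the symmetric generating set~$\calB$. For the pairs $(a_i,a_i')$ and $(b_i,b_i')$, the identity $a_i'=(\tau a_i)^{-1}$ combined with condition~(i) gives $\arf(\tau a_i)=-\arf(a_i')=-\arf(a_i)$, and similarly for $b_i$ (and mirrored for $\tau a_i'$, $\tau b_i'$). For the invariant curves~$c_i$, the identity $\tau c_i=c_i$ from Definition~\ref{def-symm-gen} together with condition~(ii) (for the ovals $c_1,\dots,c_k$) or condition~(iii) (for the twists $c_{k+1},\dots,c_{n-1}$) gives $\arf(\tau c_i)=\arf(c_i)=-\arf(c_i)$. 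For the bridges~$d_i$, the required compatibility $\arf(\tau d_i)=-\arf(d_i)$ is exactly the content of Lemma~\ref{lem-bridges}, whose hypothesis that $\arf$ vanishes on the twists $c_{k+1},\dots,c_n$ is guaranteed by condition~(iii).

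The second condition $F^2\in\Ga^*$ is handled by a careful choice of~$f$, following verbatim the argument in the proof of Theorem~\ref{corresp-lifts-arf}. In the separating case, one takes $f=j$ and any pre-image $F=J\cdot U^q$; then Proposition~\ref{J-squared} combined with the identities of Proposition~\ref{J-action} yields $F^2=e_{\Gm}\in\Ga^*$. In the non-separating case, one chooses $f=j\cdot\tau_{0,\infty}(\la/2)$, where $\tau_{0,\infty}(\la)$ represents one of the twists; the calculation performed in the proof of Lemma~\ref{lem-non-special-arf} gives $F^2=T_{0,\infty}(\la)$, and condition~(iii) ensures that $\arf(\tau_{0,\infty}(\la))=0=\levm(T_{0,\infty}(\la))$, so $F^2$ coincides with the lift of this twist already present in~$\Ga^*$.

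The main obstacle is the bridge case in the compatibility check, since the~$d_i$ are not invariant under~$\tau$ in a direct way; this is precisely what Lemma~\ref{lem-bridges} is designed to resolve, and it is there that condition~(iii) is used essentially. Once $F\Ga^*F^{-1}=\Ga^*$ and $F^2\in\Ga^*$ have been verified, the subgroup $\hGa^*=\Ga^*\cup F\cdot\Ga^*$ is a lift of~$\hGa$ into~$\Gm$, and Theorem~\ref{thm-lifts-to-Arf-functions} yields that $\arf=\arf_{\hGa^*}$ is a real $m$-Arf function on $(P,\tau)$.
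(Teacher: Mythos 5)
Your proposal is correct, but it globalises the generator computation differently from the paper. Both arguments begin with the same verification of $\arf(\tau x)\equiv-\arf(x)$ on the elements of $\calB$ (using $a_i'=(\tau a_i)^{-1}$ with condition~(i), conjugation-invariance with conditions~(ii)--(iii) for the $c_i$, and Lemma~\ref{lem-bridges} for the bridges $d_i$). The paper then stays entirely at the level of Arf functions: it sets $\tilde\arf(x)=-\arf(\tau x)$, observes that $\tilde\arf$ is again an $m$-Arf function because $\tau$ is an orientation-reversing homeomorphism, and concludes $\arf=\tilde\arf$ everywhere since two $m$-Arf functions agreeing on a generating set coincide; vanishing on all twists is then read off from condition~(iii). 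You instead rerun the existence half of Theorem~\ref{corresp-lifts-arf} under the weaker hypothesis that compatibility is known only on generators, build the lift $\hGa^*=\Ga^*\cup F\Ga^*$, and invoke Theorem~\ref{thm-lifts-to-Arf-functions}. Your route costs more group-theoretic bookkeeping (the normalisation $F\Ga^*F^{-1}=\Ga^*$, where you should note explicitly that the generator check gives only the inclusion $F\Ga^*F^{-1}\subseteq\Ga^*$ and that equality follows by repeating the argument with $F^{-1}$ or by comparing projections to $\Ga$; the choice of $f$ and the $F^2$ computation in the two cases), but it buys a cleaner treatment of the second defining property of real Arf functions: Lemma~\ref{lem-non-special-arf} delivers vanishing on \emph{all} twists, not just on the chosen curves $c_{k+1},\dots,c_n$, a point the paper's proof passes over rather quickly. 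There is no circularity, since Theorem~\ref{thm-lifts-to-Arf-functions} and the ingredients of Theorem~\ref{corresp-lifts-arf} are established before this lemma and do not depend on it.
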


\begin{rem}
%For odd $m$ condition~(ii) means that $\arf$ vanishes on $c_1,\dots,c_{n-1}$.
%For even $m$ condition~(ii) means that $\arf$ can only take values $0$ and $m/2$ on $c_1,\dots,c_{n-1}$.
Condition~(iii) means that $\arf$ vanishes on all twists.
\end{rem}

\begin{proof}
To be real, the $m$-Arf function~$\arf$ must vanish on all twists
and be compatible with~$\tau$, i.e.\ satisfy the equation $\arf(\tau x)\equiv-\arf(x)~(\mod m)$ for all~$x\in\piorbO(P)$.
Condition~(iii) implies that $\arf$ vanishes on all twists.
We will first check the equation $\arf(\tau x)\equiv-\arf(x)~(\mod m)$ for all~$x$ in~$\calB$.
\begin{enumerate}[$\bullet$]
\item
$x=a_i,b_i$, $i=1,\dots,\tilde g$:
Recall that $a_i'=(\tau a_i)^{-1}$,
hence $\tau a_i=(a_i')^{-1}$ and $\arf(\tau a_i)\equiv\arf((a_i)'^{-1})\equiv-\arf(a_i')$.
Condition~(i) implies $\arf(a_i')\equiv\arf(a_i)$, hence $\arf(\tau a_i)\equiv-\arf(a_i')\equiv-\arf(a_i)$.
Similarly $\arf(\tau b_i)\equiv-\arf(b_i)$.
\item
$x=a_i',b_i'$, $i=1,\dots,\tilde g$:
Recall that $a_i'=(\tau a_i)^{-1}$,
hence $\tau a_i'=a_i^{-1}$ and $\arf(\tau a_i')\equiv\arf(a_i^{-1})\equiv-\arf(a_i)$.
Condition~(i) implies $\arf(a_i)\equiv\arf(a_i')$, hence $\arf(\tau a_i')\equiv-\arf(a_i)\equiv-\arf(a_i')$.
Similarly $\arf(\tau b_i')\equiv-\arf(b_i')$.
\item
$x=c_i$, $i=1,\dots,n-1$:
Recall that $\tau c_i=c_i$ for an oval~$c_i$, $i=1,\dots,k$,
while $\tau c_i$ is conjugate to $c_i$ for a twist~$c_i$, $i=k+1,\dots,n-1$.
In both cases $\arf(\tau c_i)\equiv\arf(c_i)$.
Condition~(ii) implies $2\arf(c_i)\equiv0$ for $i=1,\dots,n-1$, hence $\arf(\tau c_i)\equiv\arf(c_i)\equiv-\arf(c_i)$.
\item
$x=d_i$, $i=1,\dots,n-1$:
Condition~(iii) implies that $\arf$ vanishes on all twists.
According to Lemma~\ref{lem-bridges}, it follows that $\arf(\tau d_i)\equiv-\arf(d_i)$ for $i=1,\dots,n-1$.
\end{enumerate}
Consider $\tilde\arf:\piorb(P)\to\z/m\z$ given by $\tilde\arf(x)=-\arf(\tau x)$.
The involution~$\tau$ is an orientation-reversing homeomorphism,
so it is easy to check using Definition~\ref{def-m-arf} that if $\arf$ is an $m$-Arf function then so is $\tilde\arf$.
% MORE DETAIL
We have checked that for any $x$ in $\calB$ we have $\arf(\tau x)\equiv-\arf(x)$,
hence $\tilde\arf(x)\equiv-\arf(\tau x)\equiv\arf(x)$.
Thus we have two $m$-Arf functions, $\arf$ and $\tilde\arf$, which coincide on a generating set.
We can conclude that $\arf$ and $\tilde\arf$ coincide everywhere on $\piorb(P)$,
i.e.\ for any~$x\in\piorb(P)$ we have $\arf(\tau x)\equiv-\tilde\arf(x)\equiv-\arf(x)$.
This shows that the Arf function~$\arf$ is real.
\end{proof}

\subsection{Classification and Enumeration of Real Arf Functions}

\label{subsec-classification-enum}

\myskip
This section contains the main results of the paper.

\myskip
Let $(P,\tau)$ be a Klein surface of type $(g,k,\ve)$, $g\ge2$.
Let $c_1,\dots,c_n$ be invariant curves as in Theorem~\ref{thm-gen}.
The curves $c_1,\dots,c_k$ correspond to ovals.
In the separating case ($\ve=1$) we have $n=k$.
In the non-separating case ($\ve=0$) we have $n>k$ and the curves $c_{k+1},\dots,c_n$ correspond to twists.
Let
$$
  \calB
  =
  (
    a_1,b_1,\dots,a_{\tilde g},b_{\tilde g},a_1',b_1',\dots,a_{\tilde g}',b_{\tilde g}',
    c_1,\dots,c_{n-1},d_1,\dots,d_{n-1}
  )
$$
be a symmetric generating set of~$\piorb(P)$ (Definition~\ref{def-symm-gen}).
Let $P_1$ and~$P_2$ be the connected components of the complement of the curves~$c_1,\dots,c_n$ in~$P$.
Each of these components is a surface of genus $\tilde g=(g+1-n)/2$ with $n$~holes.
We have $\tau(P_1)=P_2$.
% In the non-separating case ($\ve=0$) we may assume $n=g+1$, so that each of $P_1$ and~$P_2$ is a sphere with $g+1$ holes.

\begin{lem}
\label{lem-half-sum}
Let $\arf$ be a real $m$-Arf function on~$(P,\tau)$, then
$$\arf(c_1)+\cdots+\arf(c_n)\equiv1-g~(\mod m)$$
and
\begin{align*}
   &g\equiv1~(\mod\frac{m}{2})\quad\text{if}~m~\text{is even},\\
    &g\equiv1~(\mod m)\quad\text{if}~m~\text{is odd}.
\end{align*}
\end{lem}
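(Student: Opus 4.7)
My approach is to exploit the half-surface $P_1$ from Theorem~\ref{thm-gen} and apply the sum identity from Theorem~\ref{thm-arf-existence} to its standard generating set, then combine with the constraints forced on each $\arf(c_i)$ by the ``real'' property.

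\textbf{Step 1: Reduce to the identity on a half.}
By Theorem~\ref{thm-gen}(3) (and Remark~\ref{standard-symmetric}), the tuple
$(a_1,b_1,\dots,a_{\tilde g},b_{\tilde g},c_1,\dots,c_n)$
is free-homotopic to a standard generating set of $\piorb(P_1)$, where $P_1$ is a surface of genus $\tilde g=(g+1-n)/2$ with $n$ holes, and $c_1,\dots,c_n$ encircle the holes. The restriction of $\arf$ to loops supported in $P_1$ is itself an $m$-Arf function on $P_1$, so Theorem~\ref{thm-arf-existence}(d) yields
\[
  \arf(c_1)+\dots+\arf(c_n)\equiv(2-2\tilde g)-n\pmod m.
\]
Substituting $\tilde g=(g+1-n)/2$ gives $(2-2\tilde g)-n=1-g$, establishing the first assertion.

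\textbf{Step 2: Each $\arf(c_i)$ is 2-torsion.}
For an oval $c_i$ ($i\le k$) we have $\tau c_i=c_i$, so compatibility $\arf(\tau c_i)=-\arf(c_i)$ forces $2\arf(c_i)\equiv 0\pmod m$. For a twist $c_i$ ($k<i\le n$), property~(ii) in Definition~\ref{def-real-arf} directly gives $\arf(c_i)\equiv 0\pmod m$, which is in particular $2$-torsion. Summing,
\[
  2(1-g)\equiv 2\sum_{i=1}^n\arf(c_i)\equiv 0\pmod m.
\]

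\textbf{Step 3: Parity cases.}
If $m$ is even, the relation $2(1-g)\equiv 0\pmod m$ is exactly $g\equiv 1\pmod{m/2}$. If $m$ is odd, then $2$ is invertible modulo $m$, so $2\arf(c_i)\equiv 0\pmod m$ upgrades to $\arf(c_i)\equiv 0\pmod m$ for every $i$ (ovals and twists alike); substituting into the Step 1 identity yields $1-g\equiv 0\pmod m$, i.e.\ $g\equiv 1\pmod m$.

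The proof is essentially a bookkeeping exercise once Theorem~\ref{thm-arf-existence}(d) has been harnessed; the only conceptual step is recognising that the symmetric generating set restricts to a standard one on the half $P_1$, which is built into the very construction of symmetric generating sets in Definition~\ref{def-symm-gen}. No serious obstacle is anticipated.
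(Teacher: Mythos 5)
Your proof is correct and follows essentially the same route as the paper: apply Theorem~\ref{thm-arf-existence}(d) to the half $P_1$ to get $\arf(c_1)+\cdots+\arf(c_n)\equiv 1-g~(\mod m)$, then use the $2$-torsion of $\arf$ on ovals and the vanishing on twists to deduce the congruence on $g$. The only cosmetic difference is that you re-derive the $2$-torsion statement directly from compatibility, where the paper simply cites Lemma~\ref{lem-compatible-arf-zero-or-half-m}.
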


\begin{proof}
Theorem~\ref{thm-arf-existence}, applied to~$P_1$, implies that
$$\arf(c_1)+\cdots+\arf(c_n)\equiv(2-2\tilde g)-n\equiv1-g~(\mod m).$$
Moreover, $\arf(c_i)\equiv0~(\mod(m/2))$ for even~$m$ and $\arf(c_i)\equiv0~(\mod m)$ for odd~$m$ completes the proof.
\end{proof}

% ALTERNATIVELY: Consider $P\backslash P^{\tau}$ instead of $P_1$.

Our main result for even~$m$ is as follows:

\begin{thm}
\label{class-even}
% Nbook, Thm 3.2 on p.80
Let $m$ be even.
\begin{enumerate}[1)]
\item
Let $\arf$ be a real $m$-Arf function on~$(P,\tau)$.
Then
\begin{align*}
  &\arf(a_i)\equiv\arf(a_i')~(\mod m)~\text{and}~\arf(b_i)\equiv\arf(b_i')~(\mod m)\quad\text{for}~i=1,\dots,\tilde g,\\
  &\arf(c_i)\equiv0~(\mod(m/2))\quad\text{for}~i=1,\dots,k,\\
  &\arf(c_1)+\cdots+\arf(c_k)\equiv1-g~(\mod m),\\
  &g\equiv1~(\mod(m/2)),\\
  &\text{and for}~\ve=0~\text{additionally}~\arf(c_i)\equiv0~(\mod m)\quad\text{for}~i=k+1,\dots,n.
\end{align*}
\item
Let a value set~$\calV$ in $(\z/m\z)^{4\tilde g+2n-2}$ be
$$
  (
   \al_1,\be_1,\dots,\al_{\tilde g},\be_{\tilde g},\al_1',\be_1',\dots,\al_{\tilde g}',\be_{\tilde g}',
   \ga_1,\dots,\ga_{n-1},\de_1,\dots,\de_{n-1}
  ).
$$
Assume that
\begin{align*}
  &\al_i=\al_i'~\text{and}~\be_i=\be_i'\quad\text{for}~i=1,\dots,\tilde g,\\
  &\ga_1,\dots,\ga_{k-1}\in\{0,m/2\}.
\end{align*}
In the case $\ve=0$ assume also that
\begin{align*}
  &\ga_k\in\{0,m/2\},\\
  &\ga_1+\cdots+\ga_k\equiv1-g~(\mod m),\\
  &\ga_{k+1}=\cdots=\ga_{n-1}=0.
\end{align*}
In the case $\ve=1$ assume also that
$$g\equiv1~(\mod\frac{m}{2}).$$
Then there exists a real $m$-Arf function $\arf$ on $(P,\tau)$ with
$$\arf(a_i)=\al_i,~\arf(b_i)=\be_i,~\arf(a_i')=\al_i',~\arf(b_i')=\be_i',~\arf(c_i)=\ga_i,~\arf(d_i)=\de_i.$$
%with values~$\calV$ on the generating set~$\calB$.
For this $m$-Arf function we have
\begin{align*}
  &\arf(c_n)\equiv0~(\mod m)~\text{in the case}~\ve=0,\\
  &\arf(c_n)\equiv(1-g)-(\ga_1+\cdots+\ga_{n-1})~(\mod m)~\text{in the case}~\ve=1.
\end{align*}
\item
The number of real $m$-Arf functions on~$(P,\tau)$ is
\begin{align*}
  &m^g\quad\text{in the case}~\ve=0, k=0,\\
  &m^g\cdot 2^{k-1}\quad\text{otherwise}.
\end{align*}
\item
The Arf invariant $\de\in\{0,1\}$ of a real $m$-Arf function~$\arf$ on~$(P,\tau)$ is given by
$$\de\equiv\sum\limits_{i=1}^{n-1}(1-\arf(c_i))(1-\arf(d_i))~(\mod2).$$
%If $m\equiv0~(\mod4)$ then
%$$\de\equiv\big|\{ed.i\in\{1,\dots,n-1\}\st\arf(d_i)~\text{is even}\}\big|~(\mod2).$$
%If $m\equiv2~(\mod4)$ then
%$$\de\equiv\big|\{i\in\{1,\dots,k\}\st\arf(c_i)=0,~\arf(d_i)~\text{is even}\}\big|~(\mod2).$$
\item
Consider $\ga_1,\dots,\ga_{n-1}$ as above.
Let
$$\Si=\sum\limits_{i=1}^{n-1}(1-\ga_i)(1-\de_i).$$
In the case $\ve=1$, $m\equiv2~(\mod4)$, $\ga_1=\dots=\ga_{n-1}=m/2$,
any choice of $(\de_1,\dots,\de_{n-1})\in(\z/m\z)^{k-1}$ gives $\Si\equiv0~(\mod2)$.
In all other cases,
out of the $m^{n-1}$ possible choices for $(\de_1,\dots,\de_{n-1})\in(\z/m\z)^{n-1}$,
there are $m^{n-1}/2$ which give $\Si\equiv0~(\mod2)$ and $m^{n-1}/2$ which give $\Si\equiv1~(\mod2)$.
\item
In the case~$\ve=0$, the numbers of even and odd real $m$-Arf functions on $(P,\tau)$ are both equal to
$$\frac{m^g}{2}\quad\text{for}~k=0\quad\text{and}\quad m^g\cdot 2^{k-2}\quad\text{for}~k\ge1.$$
In the case~$\ve=1$ and $m\equiv0~(\mod4)$, the numbers of even and odd real $m$-Arf functions are both equal to
$$m^g\cdot 2^{k-2}.$$
In the case $\ve=1$ and $m\equiv2~(\mod4)$, the numbers of even and odd real $m$-Arf functions respectively are
$$m^g\cdot\frac{2^{k-1}+1}2\quad\text{and}\quad m^g\cdot\frac{2^{k-1}-1}2.$$
\end{enumerate}
\end{thm}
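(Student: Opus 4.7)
The plan is to handle the six parts in order, with Part 4 (the Arf-invariant formula) as the main obstacle. Part 1 is direct: $\arf(a_i) \equiv \arf(a_i')$ and $\arf(b_i) \equiv \arf(b_i') \pmod m$ follow from $a_i' = (\tau a_i)^{-1}$ together with compatibility and $\arf(x^{-1}) = -\arf(x)$, which give $\arf(a_i') = -\arf(\tau a_i) = \arf(a_i)$; the congruence $\arf(c_i) \equiv 0 \pmod{m/2}$ on ovals is Lemma~\ref{lem-compatible-arf-zero-or-half-m}; the vanishing on twists is part of Definition~\ref{def-real-arf}; and the sum identity $\sum \arf(c_i) \equiv 1 - g \pmod m$ together with $g \equiv 1 \pmod{m/2}$ is Lemma~\ref{lem-half-sum}.

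For Part 2, given values on the symmetric generating set satisfying the stated conditions, I would invoke Proposition~\ref{prop-arf-compact} to build an $m$-Arf function on $P$ realising them: the hypothesis $2 - 2g \equiv 0 \pmod m$ holds because $g \equiv 1 \pmod{m/2}$, and the symmetric basis is free homotopic to a standard one (Remark~\ref{standard-symmetric}), so it can play the role of a standard basis since Arf functions are conjugation-invariant. The three hypotheses of Lemma~\ref{lem-checking-real} are precisely the compatibility conditions assumed, so the resulting $\arf$ is real; the value $\arf(c_n)$ is then forced by Lemma~\ref{lem-half-sum}. Part 3 is then an arithmetic count: $\al_i = \al_i'$ and $\be_i = \be_i'$ reduce $m^{4\tilde g}$ to $m^{2\tilde g}$; the $\de_i$ contribute $m^{n-1}$; and the $\ga_i$ contribute $2^{k-1}$ choices when $k \ge 1$ (one binary constraint from the sum identity, with the mod-$(m/2)$ part automatic because $g \equiv 1 \pmod{m/2}$) or a single choice when $\ve = 0, k = 0$. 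Since $2\tilde g + n - 1 = g$, this yields the stated totals $m^g$ and $m^g \cdot 2^{k-1}$.

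The main work is Part 4. I would apply Proposition~\ref{prop-arf-compact} to the compact genus-$g$ surface $P$, obtaining $\de \equiv \sum_{i=1}^g (1 - \arf(\tilde a_i))(1 - \arf(\tilde b_i)) \pmod 2$ for any standard basis $\{\tilde a_i, \tilde b_i\}$. The key geometric step is to exhibit such a standard basis, up to free homotopy, whose symplectic pairs are exactly $(a_i, b_i)$, $(a_i', b_i')$ for $i = 1, \ldots, \tilde g$ and $(c_j, d_j)$ for $j = 1, \ldots, n-1$: after gluing $P_1$ and $P_2$ along $c_1, \ldots, c_n$, the curves $c_j$ (for $j < n$) become meridians of $n - 1$ independent handles with the bridges $d_j$ as their longitudes, while $(a_i, b_i)$ and $(a_i', b_i')$ stay standard pairs inside $P_1$ and $P_2$. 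Granting this pairing, the $(a_i, b_i)$ and $(a_i', b_i')$ contributions combine into $2 \sum (1 - \al_i)(1 - \be_i) \equiv 0 \pmod 2$ using $\al_i = \al_i'$ and $\be_i = \be_i'$, so only $\sum_{j=1}^{n-1} (1 - \ga_j)(1 - \de_j)$ survives.

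Parts 5 and 6 then follow combinatorially. In the exceptional case of Part 5 each factor $1 - m/2$ is even (as $m/2$ is odd), so $\Si \equiv 0$ for every $\de$; in every other case some $1 - \ga_i$ is odd, so toggling the corresponding $\de_i$ alternates the parity of $\Si$ and the $m^{n-1}$ choices split equally. Part 6 is the case-by-case arithmetic combination of Parts 3 and 5; the asymmetry in the $\ve = 1, m \equiv 2 \pmod 4$ subcase arises because the unique all-$m/2$ $\ga$-configuration satisfies the sum constraint in the separating case (one checks this from the compatibility of $k \equiv g+1 \pmod 2$ with $g \equiv 1 \pmod{m/2}$) but not in the non-separating one, contributing $m^{n-1}/2$ extra even functions and producing the $m^g (2^{k-1} \pm 1)/2$ split.
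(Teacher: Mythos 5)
Your proposal follows essentially the same route as the paper's own proof: Part 1 via compatibility together with Lemmas~\ref{lem-compatible-arf-zero-or-half-m} and~\ref{lem-half-sum}, Part 2 via Proposition~\ref{prop-arf-compact} plus Lemma~\ref{lem-checking-real} with $\arf(c_n)$ forced by Lemma~\ref{lem-half-sum}, Part 4 by applying Proposition~\ref{prop-arf-compact} to the symmetric generating set and cancelling the paired $(a_i,b_i)$, $(a_i',b_i')$ contributions, and Parts 5--6 by the same parity and counting analysis. The one point you assert rather than derive is that for $\ve=0$ the all-$m/2$ configuration of the $\ga_i$ is incompatible with the sum constraint (the paper's check: $n=k+1$ and $n\equiv g+1~(\mod2)$ give $k\equiv g~(\mod2)$, whence $1-g\equiv k\cdot m/2\equiv g~(\mod2)$, a contradiction), but this is a one-line verification.
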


\begin{proof}
\begin{enumerate}[1)]
\item
For a real $m$-Arf function $\arf$ on~$(P,\tau)$ we have
$$\arf(a_i')\equiv\arf((\tau a_i)^{-1})\equiv-\arf(\tau a_i)\equiv\arf(a_i)$$
and similarly $\arf(b_i')\equiv\arf(b_i)$ by definition
and $\arf(c_i)\equiv0~(\mod(m/2))$ for $i=1,\dots,k$ according to Lemma~\ref{lem-compatible-arf-zero-or-half-m}.
Lemma~\ref{lem-half-sum} implies
$$\arf(c_1)+\cdots+\arf(c_n)\equiv1-g~(\mod m)$$
and $g\equiv1~(\mod(m/2))$.
In the case~$\ve=0$ we also have from Definition~\ref{def-real-arf}
that $\arf(c_i)\equiv0~(\mod m)$ for $i=k+1,\dots,n$ and
$$
  \arf(c_1)+\cdots+\arf(c_k)
  \equiv\arf(c_1)+\cdots+\arf(c_n)
  \equiv1-g~(\mod m).
$$
In the case~$\ve=1$ we have $n=k$ and hence
$$
  \arf(c_1)+\cdots+\arf(c_k)
  =\arf(c_1)+\cdots+\arf(c_n)
  \equiv(1-g)~(\mod m).
$$
\item
In the case~$\ve=0$, we know that $1-g\equiv\ga_1+\cdots+\ga_k~(\mod m)$
and $\ga_i\equiv0~(\mod(m/2))$ for $i=1,\dots,k$, hence $2-2g\equiv0~(\mod m)$.
In the case~$\ve=1$, we know that $1-g\equiv0~(\mod(m/2))$, hence $2-2g\equiv0~(\mod m)$.
This implies, according to Proposition~\ref{prop-arf-compact},
that the values~$\calV$ on~$\calB$ determine a unique $m$-Arf function~$\arf$ on~$P$.
According to Lemma~\ref{lem-checking-real}, to show that this $m$-Arf function~$\arf$ is real,
it is sufficient to show that $\arf(c_n)\equiv0~(\mod m)$ in the case~$\ve=0$
and that $2\arf(c_n)\equiv0~(\mod m)$ in the case~$\ve=1$.
Lemma~\ref{lem-half-sum} implies that
$$1-g\equiv\arf(c_1)+\cdots+\arf(c_n)\equiv\ga_1+\cdots+\ga_{n-1}+\arf(c_n)~(\mod m),$$
hence
$$\arf(c_n)\equiv(1-g)-(\ga_1+\cdots+\ga_{n-1})~(\mod m).$$
In the case~$\ve=0$ we know that $\ga_1+\cdots+\ga_k\equiv1-g~(\mod m)$ and $\ga_{k+1}=\cdots\ga_{n-1}=0$, hence 
$$
  \arf(c_n)
  \equiv(1-g)-(\ga_1+\cdots+\ga_{n-1})
  \equiv(1-g)-(1-g)
  \equiv0~(\mod m).
$$
In the case~$\ve=1$ we know that $1-g,\ga_1,\dots,\ga_n\equiv0~(\mod(m/2))$,
hence
$$
  \arf(c_n)
  \equiv(1-g)-(\ga_1+\cdots+\ga_{n-1})
  \equiv0~(\mod(m/2))
$$
and therefore $2\arf(c_n)=0~(\mod m)$.
Thus in both cases $\arf$ is a real $m$-Arf function.
\item
For $\al_i$, $\be_i$, $\de_i$ we can take any values in~$\z/m\z$, 
i.e.\ the number of ways to choose $(\al_1,\be_1,\dots,\al_{\tilde g},\be_{\tilde g},\de_1,\dots,\de_{n-1})$ is
$$m^{2\tilde g+n-1}=m^g.$$
We have $\al_i'=\al_i$ and $\be_i'=\be_i$,
i.e.\ the values $\al_i'$ and $\be_i'$ are determined by the values $\al_i$ and $\be_i$.
We have $\ga_{k+1}=\cdots=\ga_{n-1}=0$ (for $\ve=0$), i.e.\ the values~$\ga_i$ for $i=k-1,\dots,n-1$ are fixed.
It remains to choose $\ga_1,\dots,\ga_{k-1}\in\{0,m/2\}$.
\begin{enumerate}[$\bullet$]
\item
In the case~$k=0$ (which is only possible for $\ve=0$),
there are $m^g$ different choices of~$\calV$ and hence different real $m$-Arf functions.
\item
In the case~$k\ge1$, there are $2^{k-1}$ ways to choose $\ga_1,\dots,\ga_{k-1}$.
%while $\ga_k$ is determined by the condition $\ga_1+\cdots+\ga_k\equiv1-g~(\mod m)$.
Thus there are
$$m^g\cdot 2^{k-1}$$
different choices of~$\calV$ and hence different real $m$-Arf functions.
\end{enumerate}
\item
According to Proposition~\ref{prop-arf-compact} the Arf invariant~$\de\in\{0,1\}$ of an Arf function $\arf$ with values $\calV$ on $\calB$ is
$$
  \de
  \equiv
  \sum\limits_{i=1}^{\tilde g}(1-\al_i)(1-\be_i)
  +\sum\limits_{i=1}^{\tilde g}(1-\al_i')(1-\be_i')
  +\sum\limits_{i=1}^{n-1}(1-\ga_i)(1-\de_i)~(\mod2).
$$
Using $\al_i=\al_i'$ and $\be_i=\be_i'$ we obtain
$$
  \sum\limits_{i=1}^{\tilde g}(1-\al_i)(1-\be_i)+\sum\limits_{i=1}^{\tilde g}(1-\al_i')(1-\be_i')
  \equiv2\sum\limits_{i=1}^{\tilde g}(1-\al_i)(1-\be_i)
  \equiv0~(\mod2),
$$
hence
$$\de\equiv\sum\limits_{i=1}^{n-1}(1-\ga_i)(1-\de_i)~(\mod2).$$
Note that $(1-\ga_i)(1-\de_i)\not\equiv0~(\mod2)$ if and only if $\ga_i$ and $\de_i$ are both even.
Hence
$$\de\equiv\big|\{i\in\{1,\dots,n-1\}\st\ga_i~\text{and}~\de_i~\text{are even}\}\big|~(\mod2).$$
%If $m\equiv0~(\mod4)$ then $\ga_1,\dots,\ga_{n-1}$ are even, hence
%$$\de\equiv\big|\{i\in\{1,\dots,n-1\}\st\de_i~\text{is even}\}\big|~(\mod2).$$
%If $m\equiv2~(\mod4)$ then
%$$\de\equiv\big|\{i\in\{1,\dots,n-1\}\st\ga_i=0,~\de_i~\text{is even}\}\big|~(\mod2).$$
\item
We need to determine, for given $\ga_1,\dots,\ga_{n-1}$,
how many of the $m^{n-1}$ ways to choose $\de_1,\dots,\de_{n-1}$ lead to
$$\Si=\sum\limits_{i=1}^{n-1}(1-\ga_i)(1-\de_i)$$
being even and odd respectively.
If there exists $r\in\{1,\dots,n-1\}$ such that $\ga_r$ is even,
then for any choice of $\de_1,\dots,\hat{\de_r},\dots,\de_{n-1}$ 
exactly half of the possible choices for~$\de_r$ will lead to even $\Si$ and half to odd $\Si$.

Is the case $\ga_1\equiv\cdots\equiv\ga_{n-1}\equiv1~(\mod2)$ possible?
Recall that $\ga_i=0$ for $i>k$ and $\ga_i\in\{0,m/2\}$ for $i\le k$,
i.e.\ $\ga_i$ is odd if and only if $i\in\{1,\dots,k\}$, $\ga_i=m/2$ and $m\equiv2~(\mod4)$.

In the case $\ve=0$ the situation where $\ga_1\equiv\cdots\equiv\ga_{n-1}\equiv1~(\mod2)$ is only possible
if $m\equiv2~(\mod4)$, $\ga_1=\dots=\ga_k=m/2$ and $n=k+1$.
Recall that $n\equiv g+1~(\mod2)$, comparing with $n=k+1$ we obtain $k\equiv g~(\mod2)$.
Recall that $1-g=\ga_1+\cdots+\ga_k$, comparing with $\ga_1=\cdots=\ga_k=m/2$ we obtain $1-g\equiv k\cdot m/2~(\mod m)$.
Using $k\equiv g~(\mod2)$ and $m/2\equiv1~(\mod2)$ we obtain $1-g\equiv g~(\mod2)$.
This contradiction shows that for $\ve=0$ the situation where $\ga_1\equiv\cdots\equiv\ga_{n-1}\equiv1~(\mod2)$ is impossible.

On the other hand in the case $\ve=1$ the situation where $\ga_1\equiv\cdots\equiv\ga_{n-1}\equiv1~(\mod2)$ 
is possible for $m\equiv2~(\mod4)$ and $\ga_1=\cdots=\ga_{k-1}=m/2$.
In this situation $\Si$ is even.
\item
In the case $\ve=1$ and $m=2~(\mod4)$,
for any of the $2^{k-1}-1$ ways to choose
$$(\ga_1,\dots,\ga_{k-1})\ne(m/2,\dots,m/2),$$
the number of ways to choose $\de_1,\dots,\de_{k-1}$
so that $\Si$ is even and odd respectively is $m^{k-1}/2$.
However for $(\ga_1,\dots,\ga_{k-1})=(m/2,\dots,m/2)$ any of the $m^{k-1}$ choices of $\de_1,\dots,\de_{k-1}$
gives even $\Si$.
Therefore the number of choices of $\ga_i$ and $\de_i$ that give odd $\Si$ is
$$(2^{k-1}-1)\cdot\frac{m^{k-1}}2$$
and the number of choices of $\ga_i$ and $\de_i$ that give even $\Si$ is
$$(2^{k-1}-1)\cdot\frac{m^{k-1}}2+m^{k-1}=(2^{k-1}+1)\cdot\frac{m^{k-1}}{2}.$$
Thus the number of even and odd real $m$-Arf functions is
$$
  m^{2\tilde g}\cdot(2^{k-1}\pm1)\cdot\frac{m^{k-1}}2
  =m^{2\tilde g+k-1}\cdot\frac{2^{k-1}\pm1}2
  =m^g\cdot\frac{2^{k-1}\pm1}2
$$
respectively.

In all other cases, i.e.\ in the case $\ve=0$ and in the case $\ve=1$, $m\equiv0~(\mod4)$,
the numbers of even and odd real $m$-Arf functions are equal.
In the case~$k=0$ (which is only possible for $\ve=0$), there are $m^g$ different real $m$-Arf functions,
therefore the numbers of even and odd real $m$-Arf functions are both equal to
$$\frac{m^g}{2}.$$
In the case~$k\ge1$, there are $m^g\cdot 2^{k-1}$ different real $m$-Arf functions,
therefore the numbers of even and odd real $m$-Arf functions are both equal to
$$\frac{m^g\cdot 2^{k-1}}{2}=m^g\cdot 2^{k-2}.$$
\end{enumerate}
\end{proof}

Our main result for odd~$m$ is as follows:

\begin{thm}
\label{class-odd}
% Nbook, Thm 3.2 on p.80
Let $m$ be odd.
\begin{enumerate}[1)]
\item
Let $\arf$ be a real $m$-Arf function on~$(P,\tau)$.
Then
\begin{align*}
  &\arf(a_i)=\arf(a_i')~\text{and}~\arf(b_i)=\arf(b_i')\quad\text{for}~i=1,\dots,\tilde g,\\
  &\arf(c_1)=\cdots=\arf(c_n)=0,\\
  &g=1~(\mod m).
\end{align*}
\item
Assume that
$$g=1~(\mod m).$$
Let a value set~$\calV$ in $(\z/m\z)^{4\tilde g+2n-2}$ be
$$
  (
   \al_1,\be_1,\dots,\al_{\tilde g},\be_{\tilde g},\al_1',\be_1',\dots,\al_{\tilde g}',\be_{\tilde g}',
   \ga_1,\dots,\ga_{n-1},\de_1,\dots,\de_{n-1}
  ).
$$
Assume that
\begin{align*}
  &\al_i=\al_i'~\text{and}~\be_i=\be_i'\quad\text{for}~i=1,\dots,\tilde g,\\
  &\ga_1=\cdots=\ga_{n-1}=0.
\end{align*}
Then there exists a real $m$-Arf function $\arf$ on $(P,\tau)$ with
$$\arf(a_i)=\al_1,~\arf(b_i)=\be_i,~\arf(a_i')=\al_i',~\arf(b_i')=\be_i',~\arf(c_i)=\ga_i,~\arf(d_i)=\de_i.$$
For this Arf function we have $\arf(c_n)=0$.
\item
The number of real $m$-Arf functions on~$(P,\tau)$ is $m^g$.
\end{enumerate}
\end{thm}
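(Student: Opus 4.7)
The plan is to mirror the three-part structure of the even case, Theorem~\ref{class-even}, simplified substantially by the fact that for odd~$m$ no $m/2$-ambiguities arise and the Arf invariant is automatically $\de=0$.

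For part~(1), I would start from the defining properties of a real $m$-Arf function (Definition~\ref{def-real-arf}). Compatibility applied to $a_i$ via $\tau a_i=(a_i')^{-1}$ and to $b_i$ yields $\arf(a_i')=\arf(a_i)$ and $\arf(b_i')=\arf(b_i)$. For an oval~$c_i$, compatibility gives $2\arf(c_i)\equiv0~(\mod m)$, and for odd~$m$ this forces $\arf(c_i)=0$ (this is Lemma~\ref{lem-compatible-arf-zero-or-half-m} specialised to odd~$m$). For a twist~$c_i$, property~(ii) of Definition~\ref{def-real-arf} directly gives $\arf(c_i)=0$. Substituting $\arf(c_1)+\cdots+\arf(c_n)=0$ into Lemma~\ref{lem-half-sum} then yields $g\equiv1~(\mod m)$.

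For part~(2), the hypothesis $g\equiv1~(\mod m)$ gives $2-2g\equiv0~(\mod m)$, so Proposition~\ref{prop-arf-compact} (applied through Remark~\ref{standard-symmetric} to pass between the symmetric set~$\calB$ and a standard generating set, using conjugation-invariance of Arf functions) produces a unique $m$-Arf function $\arf$ on~$P$ realising the prescribed values on~$\calB$. To conclude that $\arf$ is real I would invoke Lemma~\ref{lem-checking-real}: condition~(i) holds because $\al_i=\al_i'$ and $\be_i=\be_i'$; condition~(ii) holds trivially since $\ga_i=0$; and condition~(iii) reduces to $\arf(c_n)=0$, which follows from Lemma~\ref{lem-half-sum} as $\arf(c_n)\equiv(1-g)-(\ga_1+\cdots+\ga_{n-1})\equiv0~(\mod m)$.

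For part~(3), I would count the free parameters in~$\calV$: the values $\al_i,\be_i$ range freely over $\z/m\z$, giving $m^{2\tilde g}$ choices; the primed values are forced; the $\ga_i$ are all fixed at~$0$; and the $\de_i$ for $i=1,\dots,n-1$ range freely, giving $m^{n-1}$ further choices. Using $\tilde g=(g+1-n)/2$, the total is $m^{2\tilde g+n-1}=m^g$. I expect no substantial obstacle: the odd case is inherently simpler than the even one just treated, and the only care required is the transfer of Proposition~\ref{prop-arf-compact} from standard to symmetric generating sets via Remark~\ref{standard-symmetric}, a step already used in the proof of Theorem~\ref{class-even}.
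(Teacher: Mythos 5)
Your proposal is correct and follows essentially the same route as the paper: part~(1) via Lemma~\ref{lem-compatible-arf-zero-or-half-m}, Definition~\ref{def-real-arf} and Lemma~\ref{lem-half-sum}; part~(2) via Proposition~\ref{prop-arf-compact} together with Lemma~\ref{lem-checking-real} and the computation $\arf(c_n)\equiv(1-g)-(\ga_1+\cdots+\ga_{n-1})\equiv0~(\mod m)$; and part~(3) by the same parameter count $m^{2\tilde g+n-1}=m^g$. The only addition is your explicit appeal to Remark~\ref{standard-symmetric} for passing between symmetric and standard generating sets, which the paper leaves implicit.
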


\begin{proof}
\begin{enumerate}[1)]
\item
For a real $m$-Arf function $\arf$ on~$(P,\tau)$
we have $\arf(c_i)=0$ for $i=1,\dots,k$ according to Lemma~\ref{lem-compatible-arf-zero-or-half-m}
and $\arf(c_i)=0$ for $i=k+1,\dots,n$ by Definition~\ref{def-real-arf}.
Lemma~\ref{lem-half-sum} implies
$\arf(c_1)+\cdots+\arf(c_n)=1-g~(\mod m)$ and $g=1~(\mod m)$.
\item
The condition $g=1~(\mod m)$ implies $2-2g=0~(\mod m)$,
hence, according to Proposition~\ref{prop-arf-compact},
the values~$\calV$ on~$\calB$ determine a unique $m$-Arf function~$\arf$ on~$P$.
According to Lemma~\ref{lem-checking-real}, to show that this $m$-Arf function~$\arf$ is real,
it is sufficient to show that $\arf(c_n)=0$.
Lemma~\ref{lem-half-sum} implies that
$$\arf(c_1)+\cdots+\arf(c_n)=1-g~(\mod m).$$
On the other hand $\ga_1=\cdots=\ga_{n-1}=0$, hence
$$\arf(c_1)+\cdots+\arf(c_n)=\ga_1+\cdots+\ga_{n-1}+\arf(c_n)=\arf(c_n)~(\mod m).$$
Comparing these equations we obtain
$$\arf(c_n)=1-g~(\mod m).$$
The condition $g=1~(\mod m)$ implies $\arf(c_n)=0~(\mod m)$.
Thus $\arf$ is a real $m$-Arf function.
\item
There are $m^{2\tilde g}$ way to choose $\al_i=\al_i'$ and $\be_i=\be_i'$,
only one way to choose $\ga_1,\dots,\ga_{n-1}$
and $m^{n-1}$ ways to choose $\de_1,\dots,\de_{n-1}$,
hence there are
$$m^{2\tilde g+n-1}=m^g$$
different choices of~$\calV$ and hence different real $m$-Arf functions.
\end{enumerate}
\end{proof}

%%\nocite{*}

%%\bibliographystyle{amsalpha}
%%\bibliography{realforms}

\begin{thebibliography}{Wei83}

\bibitem[AG]{AllingGreenleaf:1971}
N.L. Alling and N. Greenleaf,
\emph{Foundations of the Theory of Klein Surfaces},
Lecture Notes in Mathematics, vol. 219, Springer, Berlin, 1971.

\bibitem[Ati]{A}
M.~Atiyah,
\emph{{Riemann surfaces and spin structures}},
Ann. Sci. \'Ecole Norm. Sup. \textbf{4}(4) (1971), 47--62.

\bibitem[BEGG]{BEGG}
E.~Bujalance, J.J.~Etayo, J.M.~Gamboa, G.~Gromadzki, 
\emph{{Automorphism groups of compact bordered Klein surfaces}},
Lecture Notes in Mathematics, vol. 1439, Springer-Verlag, Berlin, 1990.

%\bibitem[Dol]{Dolgachev:1983}
%I.~Dolgachev,
%\emph{{On the Link Space of a Gorenstein Quasihomogeneous Surface Singularity}},
%Math.\ Ann. \textbf{265} (1983), 529--540.

\bibitem[FSZ]{FShZ}
C.~Faber, S.~Shadrin and D.~Zvonkine,
\emph{{Tautological relations and the r-spin Witten conjecture}},
Ann.\ Sci.\ Ec.\ Norm.\ Super. \textbf{43} (2010), 621--658.

\bibitem[JN]{JN}
M.~Jankins and W.~Neumann, \emph{Homomorphisms of {F}uchsian groups to {${\rm PSL}(2,{\rm R})$}},
Comment. Math. Helv. \textbf{60} (1985), no.~3, 480--495.

\bibitem[Jar]{Jarvis:2000}
T.~Jarvis,
\emph{{Geometry of the moduli of higher spin curves}},
Internat. J. Math.\ \textbf{11}(5) (2000), 637--663.

\bibitem[Macb]{Macbeath:1967}
A.M.~Macbeath, \emph{{The classification of non-Euclidean plane crystallographic groups}},
Canad.\ J.\ Math. \textbf{19}(6) (1967), 1192--1205.

\bibitem[Mum]{Mu}
D.~Mumford,
\emph{{Theta characteristics of an algebraic curve}},
Ann. Sci. \'Ecole Norm. Sup. \textbf{4}(4) (1971), 181--192.

\bibitem[Nat75]{N1975}
S.~Natanzon,
\emph{{Moduli of Real Algebraic Curves}},
Uspekhi Mat.\ Nauk \textbf{30} (1975), 251--252
% no translation

\bibitem[Nat78]{N1978moduli}
\bysame,
\emph{{Spaces of moduli of real curves (Russian)}},
Trudy Moskov.\ Mat.\ Obshch. \textbf{37} (1978), 219--253, 270, translation in Trans.\ MMS (1980).

%\bibitem[Nat89]{N1989}
%\bysame,
%\emph{{Spin bundles on real algebraic curves}},
%Uspekhi Mat.\ Nauk \textbf{44} (1989), 165--166,
%translation in Russian Mathematical Surveys \textbf{44} (1989), 208--209.

\bibitem[Nat90a]{N1990a}
\bysame,
\emph{{Klein Surfaces}},
Uspekhi Mat.\ Nauk \textbf{45} (1990), 47--90, 189,
translation in Russian Mathematical Surveys \textbf{45} (1990), 53--108.

\bibitem[Nat90b]{N1990b}
\bysame,
\emph{{Klein supersurfaces}},
Mat.\ Zametki \textbf{48} (1990), 72--82, 159,
translation in Math.\ Notes \textbf{48} (1990), 766--772.

\bibitem[Nat91]{N1991}
\bysame, \emph{{Discrete subgroups of $\text{GL}(2,\mathbb C)$ and spinor bundles on Riemann and Klein surfaces}},
Funct.\ Anal.\ Appl. \textbf{25} (1991), 76--78.

\bibitem[Nat94]{N1994}
\bysame, \emph{{Classification of pairs of Arf functions on orientable and non-orientable surfaces}},
Funct.\ Anal.\ Appl. \textbf{28} (1994), 178--186.

%\bibitem[Nat96]{N1996}
%\bysame, \emph{{Spinors and differentials of real algebraic curves}},
%Topology of real algebraic varieties and related topics, AMS Transl., ser.~2, \textbf{173} (1996), 179--186.

\bibitem[Nat99]{N1999}
\bysame,
\emph{{Moduli of real algebraic curves and their super-analogues. Spinors and Jacobians of real curves}},
Uspekhi Mat.\ Nauk \textbf{54} (1999), 3--60,
translation in Russian Mathematical Surveys \textbf{54} (1999), 1091--1147.

\bibitem[Nat04]{Nbook}
\bysame, \emph{{Moduli of Riemann surfaces, real algebraic curves, and their super-analogs}},
Translations of Mathematical Monographs, vol. 225, American Mathematical Society, Providence, RI, 2004.

\bibitem[NP05]{NP:2005}
S.~Natanzon and A.~Pratoussevitch,
\emph{{The Topology of $m$-Spinor Structures on Riemann Surfaces}},
Russian Mathematical Surveys \textbf{60} (2005), 363--364.

\bibitem[NP09]{NP:2009}
\bysame, \emph{{Higher Arf Functions and Moduli Space of Higher Spin Surfaces}},
Journal of Lie Theory \textbf{19} (2009), 107--148.

\bibitem[NP11]{NP:2011}
\bysame, \emph{{Moduli Spaces of Gorenstein Quasi-Homogeneous Surface Singularities}},
Russian Mathematical Surveys \textbf{66} (2011), 1009--1011.

\bibitem[NP13]{NP:2013}
\bysame, \emph{{Topological invariants and moduli of Gorenstein singularities}},
Journal of Singularities \textbf{7} (2013), 61--87.

\bibitem[NP15b]{NP:2015b}
\bysame, \emph{{Moduli spaces of higher spin Klein surfaces}}, {\tt math.AG/1506.03511}.

\bibitem[OT]{OT:2013}
C.~Okonek and A.~Teleman, \emph{{Abelian Yang-Mills theory on real tori and Theta divisors of Klein surfaces}},
Comm.\ Math.\ Phys.\ \textbf{323} (2013), 813--858.

\bibitem[R]{R}
B.~Riemann, \emph{Collected works}, Dover edition 1953.

\bibitem[Wei]{Weichold:1883}
G.~Weichold, \emph{{\"Uber symmetrische Riemann'sche Fl\"achen und die Periodizit\"atsmoduln der zugeh\"origen 
Abelschen Normalintegrale erster Gattung}},
Zeitschrift f\"ur Math.\ und Physik \textbf{28} (1883), 321--351.

\bibitem[Wil]{Wilkie:1966}
H.C.~Wilkie, \emph{{On non-Euclidean crystallographic groups}},
Math.\ Zeitschrift \textbf{91} (1966), 87--102.

\bibitem[Wit]{Witten:1993}
E.~Witten,
\emph{{Algebraic geometry associated with matrix models of two-dimensional gravity}},
Topological methods in modern mathematics (Stony Brook, NY, 1991), Publish or Perish, Houston, TX (1993), 235--269.

\bibitem[Z]{Zieschang:book}
H.~Zieschang,
\emph{{Finite Groups of Mapping Classes of Surfaces}},
Lecture Notes in Mathematics, vol. 875, Springer-Verlag, Berlin, 1981.

\end{thebibliography}

\def\cprime{$'$}
\providecommand{\bysame}{\leavevmode\hbox to3em{\hrulefill}\thinspace}
\providecommand{\MR}{\relax\ifhmode\unskip\space\fi MR }
% \MRhref is called by the amsart/book/proc definition of \MR.
\providecommand{\MRhref}[2]{%
  \href{http://www.ams.org/mathscinet-getitem?mr=#1}{#2}
}
\providecommand{\href}[2]{#2}

\end{document}